\theoremstyle{plain}
\newcommand{\tens}[1][]{\mathbin{\otimes_{\raise1.5ex\hbox to-.1em{}{#1}}}}
\newtheorem{theorem}{Theorem}[section]
\newtheorem{lemma}[theorem]{Lemma}
\newtheorem{proposition}[theorem]{Proposition}
\theoremstyle{definition}}
\theoremstyle{definition}}
\theoremstyle{definition}}
\theoremstyle{definition}}
\theoremstyle{definition}\newtheorem{definition}[theorem]{Definition}}
\theoremstyle{definition}}
\theoremstyle{definition}\newtheorem{remark}[theorem]{Remark}}
\author{Julien Grivaux}
\address{Sorbonne Université \\ 
Institut de Mathématiques de Jussieu-Paris Rive Gauche \\
Case 247, 4 place Jussieu \\
F-75005, Paris, France.}
\email{jgrivaux@math.cnrs.fr}
\title{Derived intersections and free dg-Lie algebroids}
\begin{document}

\thanks{\textit{To Professor Masaki Kashiwara, with admiration and respect}}

\begin{abstract}
We study free dg-Lie algebroids over arbitrary derived schemes, and compute their universal enveloping and jet algebras. We also introduce derived twisted connections, and relate them with lifts on twisted square zero extensions. This construction allows us to provide new conceptual approaches of existing results concerning the derived deformation theory of subschemes.
\end{abstract}
\vspace*{1.cm}
\maketitle
\tableofcontents

\section{Introduction}
Let $\mathbf{k}$ be a field of characteristic zero, let $X$ and $Y$ to smooth $\mathbf{k}$-schemes, and assume that $X$ is a closed subscheme of $Y$. The derived self-intersection of $X$ in $Y$ is a derived $\mathbf{k}$-scheme that can be realized concretely as $X$ endowed with a sheaf of dg-algebras whose underlying object in the derived category of $X$ is the derived tensor product $\mathcal{O}_X \overset{\mathbb{L}}{\otimes}_{\mathcal{O}_Y} \mathcal{O}_X$. Therefore, computing this object as well as its dg structure on it is of high interest.
\par \smallskip
The case that has been the most studied at first is the case of the diagonal embedding. In that case the corresponding derived intersection is called the derived loop space of $X$, and can be thought as the derived algebraic version of the space of parametrized loops of a topological space. It has the structure of a derived group scheme over $X$, and the underlying derived scheme can be interpreted as the total space of the derived bundle $\mathrm{T}_X[-1]$. This is a geometric way to express the geometric Hochschild-Kostant-Rosenberg isomorphism as an isomorphism between $\mathcal{O}_X \overset{\mathbb{L}}{\otimes}_{\mathcal{O}_{X \times X}} \mathcal{O}_X$ and $\mathrm{sym}(\Omega^1_X[1])$. The derived group scheme structure is encoded by a Lie algebra on $\mathrm{T}_X[-1]$, that has been discovered by Kapranov \cite{kapranov_rozansky-witten_1999} and Markarian \cite{markarian_atiyah_2009}. The Lie bracket turns out to be the Atiyah class of the tangent bundle $\mathrm{T}_X$. Besides, the universal enveloping algebra of this Lie algebra is isomorphic to the derived Hoschild homology complex $\mathcal{RH}om_{\mathcal{O}_{X \times X}}(\mathcal{O}_X, \mathcal{O}_X)$, as proven in \cite{markarian_atiyah_2009}, \cite{ramadoss_big_2008}, and more recently by a new method in \cite{calaque_ext_2017}.
\par \smallskip
Various attempts have been made by several authors in order to have a grasp on more general situations, starting with \cite{arinkin_when_2012}: the authors prove that the Hochschild-Kostant-Rosenberg doesn't always hold in the case of a general pair $(X, Y)$: a necessary and sufficient condition for this is that the conormal bundle $\mathrm{N}^*_{X/Y}$ extends to a locally free sheaf at the first order. A particular case of this setting, namely the case of quantized cycles, is developed in  \cite{grivaux_hochschild-kostant-rosenberg_2014}, building on a 1992 unpublished letter from Kashiwara to Schapira. However, even in the quantized case, the ext algebra $\mathcal{RH}om_{\mathcal{O}_Y}(\mathcal{O}_X, \mathcal{O}_X)$ is not easy to understand. The recent work \cite{calaque_ext_2017} allows to describe concretely this algebra for tame quantized cycles, which is a special class of quantized cycles discovered in \cite{yu_todd_2015}.
\par \smallskip
Let us now focus on non-quantized cycles. This is a more difficult task that has been completed in the groundbreaking paper \cite{calaque_lie_2014}, and independently in \cite{MR3570155}. The main idea in \cite{calaque_lie_2014} is that the self-derived intersection of $X$ in $Y$ is an algebraic counterpart of the set of continuous paths in $Y$ whose beginning and end points lie in $X$. Hence, it is no longer a derived group scheme, but there should be a groupoid structure given by the composition of paths.
This is indeed the case: the shifted normal bundle $\mathrm{N}_{X/Y}[-1]$ is naturally $\mathrm{L}_{\infty}$ dg-Lie algebroid, the anchor map being given by the extension class of the normal exact sequence of the pair $(X, Y)$. The enveloping algebra and the jet algebra of this Lie algebroid are isomorphic to $\mathcal{RH}om_{\mathcal{O}_Y}(\mathcal{O}_X, \mathcal{O}_X)$ and  $\mathcal{O}_X \overset{\mathbb{L}}{\otimes}_{\mathcal{O}_Y} \mathcal{O}_X$ respectively. This result gives a full understanding of the situation except that the homotopy-Lie algebroid structure is delicate to compute practically. On the other hand, the self-intersection of $X$ into its first formal neighborhood is computed in \cite{bigpaper}. 
\par \smallskip
Our goal is to provide among other things an alternative conceptual approach of the main result of \cite{bigpaper} by developing the theory of free dg-Lie algebroids initiated in \cite{kapranov_free_2007} in a systematic way. In particular, we describe explicitly their enveloping and jet algebras, which is new up to the author's knowledge. The strategy of studying free Lie algebroids in order to understand the first order approximation of a subscheme is totally in accordance with derived deformation theory and has been highlighted in many recent contributions (\cite{2018arXiv180209556C}, \cite{MR3701353}, \cite{nuiten_homotopical_2019}). It represents a geometric extension of Lurie's theory for formal moduli problems over a field \cite{DAG-X}, \cite{MR3666027} or over a ring \cite{MR3836141}.
\par \smallskip
\textbf{Acknowledgments} The author would like to express his thanks to Eduard Balzin, Damien Calaque and Bernhard Keller for many discussions related to the topic of this paper, as well as the referee whose work led to a substantial improvement of the manuscript.
\section{Recollections}
\subsection{DG-modules}
Let $\mathbf{k}$ be a field of characteristic zero, and let $A$ be a commutative differential graded algebra over $\mathbf{k}$. We will always assume that $A$ is cohomologically concentrated in nonpositive degrees. If nothing is specified, $A$-module will mean \textit{left} differential graded $A$-module.
\par \medskip
The category of unbounded dg $A$-modules is a closed monoidal abelian category $\mathrm{C}(A)$. Its derived category is denoted by $\mathrm{D}(A)$. For general properties of $\mathrm{D}(A)$, we refer to \cite[\S 3]{keller_differential_2006}. An $A$-module $M$ is called \textit{h-flat} if the functor $M \otimes_A \star$ is exact and preserves acyclicity (and thus quasi-isomorphisms).
\par \medskip 
An $A$-module is (strictly) perfect if it is in the smallest subcategory of $\mathrm{C}(A)$ that contains $A$ and is stable by shift (in both directions), extensions, and direct summands. 
In this paper, perfect will always mean strictly perfect. Perfect $dg$-modules have very nice properties: they are dualizable in the monoidal category $\mathrm{C}(A)$, h-flat, and stable under extensions.
\par \medskip
An $A$-module $V$ is reflexive \footnote{Some authors adopt the terminology \textit{weakly dualizable}.} if the natural map from $V$ to its bidual $V^{**}$ is an isomorphism.
\subsection{Graded derivations and DG-Lie algebroids}
\par \medskip
We denote by $\mathbb{T}_A$ the $A$-module of graded derivations of $A$: a homogeneous element $D$ in $\mathbb{T}_A$ satisfies
\[
D(a a')= D(a) a' + (-1)^{|D|. |a|} a D(a').
\] 
We also introduce the right $A$-module of K\"{a}hler differentials of $A$. It is generated over $A$ by the symbols $da$, with the relations given by 
\[
d(aa')=da . a' + (-1)^{|a|.|a'|} da' . a.
\]
It is well-known that $\mathbb{T}_A$ endowed with the graded commutator is a dg-Lie algebra over $\mathbf{k}$. Besides, $\mathbb{T}_A$ is the dual of $\Omega^1_A$. We will also consider $\Omega^1_A$ as a left $A$-module, by putting
$a \omega= (-1)^{|a|.|\omega|} \omega a$. Seeing $\Omega^1_A$ as a bimodule (\textit{see} \S \ref{bimod}) is nice because of the relation
\[
d(aa')=da.a' + a. da'.
\]
\begin{definition}
A $(\mathbf{k}, A)$ dg-Lie algebroid\footnote{The original algebraic definition when $A$ is a usual commutative $\mathbf{k}$-algebra goes back to \cite{rinehart_differential_1963}.} is a pair $(L, \rho)$ where
\begin{enumerate}
\item[--] $L$ is a dg-Lie algebra over the ground field $\mathbf{k}$, endowed with an $A$-module structure.
\item[--] The morphism $\rho \colon L \rightarrow \mathbb{T}_A$, called the anchor map, is an $A$-linear map of dg-Lie algebras.
\item[--] For any homogeneous elements $a$ in $A$ and $\ell_1$, $\ell_2$ in $L$, 
\[
[\ell_1, a \ell_2]-(-1)^{|a| . |\ell_1|}a \,[\ell_1, \ell_2]=\rho(\ell_1)(a)\, \ell_2
\]
\end{enumerate}
\end{definition} 
Note that the two structures (the $\mathbf{k}$ dg-Lie structure and the $A$-module structure) are a priori non compatible, but their lack of compatibility is controlled via the anchor map by the third axiom.
\par \medskip
We denote by $\mathbf{Lie}_{\mathbf{k}/A}$ the category of $(\mathbf{k}, A)$ dg-Lie algebroids. 
\par \medskip
There is a natural forgetful functor from $\mathbf{Lie}_{\mathbf{k}/A}$ to $\mathbf{mod}_{A/\mathbb{T}_A}$ that forgets the dg-Lie structure, and keeps track only on the anchor map. This functor admits a left adjoint
\[
\textit{free} \colon \mathbf{mod}_{A/\mathbb{T}_A} \rightarrow \mathbf{Lie}_{\mathbf{k}/A}.
\]
For any anchored $A$-module $V$, the dg-Lie algebroid $\textit{free}\,(V)$ can be constructed from the free Lie algebra generated by $V$ modulo suitable $A$-linearity relations, we refer to \cite[\S 2.1]{kapranov_free_2007} for the explicit construction.
\subsection{Universal enveloping algebra of a dg-Lie algebroid} \label{toolbox}
For this section, our main references are \cite[\S 2]{calaque_pbw_2014}, \cite[\S 1, 2]{kapranov_free_2007} and \cite{rinehart_differential_1963}. However, we provide many details since this material is not classical.
\par \medskip 
Let us now recall the construction of the universal enveloping algebra of a dg-Lie algebroid. We first define it by hand, and then gives the universal property it satisfies.
\par \medskip
If $L$ is a $(\mathbf{k}, A)$ dg-Lie algebroid, we can endow $\widehat{L}=A \oplus L$ with a structure of a dg-Lie algebra over $\mathbf{k}$ by putting
\[
[a+ \ell, a'+ \ell'] = \rho(\ell)(a')-(-1)^{|a| . |\ell'|} \rho(\ell')(a)+ [\ell, \ell'].
\]
We denote by $U(\widehat{L})$ the enveloping algebra of $\widehat{L}$, and by $U^+(\widehat{L})$ the augmentation ideal of $U(\widetilde{L})$.
\begin{definition}
If $L$ is a $(\mathbf{k}, A)$ dg-Lie algebroid, the universal enveloping algebra $\mathrm{U}_{\mathbf{k}/A}(L)$ of $L$ is the quotient $U^+(\widehat{L})/P$ where $P$ is the two-sided ideal generated by elements of the form $a.\widehat{\ell}- a\widehat{\ell}$ for $a$ in $A$ and $\widehat{\ell}$ in $\widehat{L}$.  It carries a filtration induced by the natural grading of $\mathrm{U}^{+}(\widehat{L})$.
\end{definition}

We now give a more intrinsic characterization of the universal enveloping algebra of a Lie algebroid using universal properties.

\begin{definition}
The category $\mathbf{alg}_{A/ \mathbf{End}_\mathbf{k}(A)}$ is defined as follows:
\begin{enumerate}
\item[--] Objects of the category $\mathbf{alg}_{A/ \mathbf{End}_\mathbf{k}(A)}$ are pairs $(R, \sigma)$ such that $R$ is a unital $A$-algebra, and $\sigma \colon R \rightarrow \mathbf{End}_{\mathbf{k}}(A)$ is a $A$-linear algebra morphism (called anchor) 
\item[--] Morphisms are morphisms of $A$-algebras commuting with the anchors.
\end{enumerate}
\end{definition}

\begin{lemma}
Let $L$ be a Lie algebroid. Then $\mathrm{U}^+_{\mathbf{k}/A}(L)$ is naturally an object of the category $\mathbf{alg}_{A/ \mathbf{End}_\mathbf{k}(A)}$.
\end{lemma}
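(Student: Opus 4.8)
The plan is to produce both the $A$-algebra structure and the anchor $\sigma$ at once, from the universal property of the enveloping algebra of the $\mathbf{k}$ dg-Lie algebra $\widehat{L}=A\oplus L$. The crucial observation is that there is a canonical morphism of dg-Lie algebras over $\mathbf{k}$
\[
\tau\colon\widehat{L}\longrightarrow\mathbf{End}_{\mathbf{k}}(A),\qquad a+\ell\longmapsto\lambda_{a}+\rho(\ell),
\]
where $\lambda_{a}\in\mathbf{End}_{\mathbf{k}}(A)$ denotes left multiplication by $a$, where $\mathbf{End}_{\mathbf{k}}(A)$ is equipped with composition, the graded commutator bracket, and the differential $[d_{A},-]$, and where $\mathbb{T}_{A}\subseteq\mathbf{End}_{\mathbf{k}}(A)$. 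First I would record that $\tau$ is a chain map: the assignment $a\mapsto\lambda_{a}$ is one because $[d_{A},\lambda_{a}]=\lambda_{d_{A}a}$ by the Leibniz rule, and $\rho$ is one by definition of a dg-Lie algebroid.

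Next I would check that $\tau$ preserves brackets; this is the step where the graded signs and the anchor really enter. Expanding $[\tau(a+\ell),\tau(a'+\ell')]$ produces four terms: $[\lambda_{a},\lambda_{a'}]=0$ because $A$ is graded-commutative; $[\rho(\ell),\lambda_{a'}]=\lambda_{\rho(\ell)(a')}$ and $[\lambda_{a},\rho(\ell')]=-(-1)^{|a||\ell'|}\lambda_{\rho(\ell')(a)}$, both because $\rho(\ell)$ and $\rho(\ell')$ are graded derivations; and $[\rho(\ell),\rho(\ell')]=\rho([\ell,\ell'])$ since $\rho$ is a morphism of dg-Lie algebras. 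Summing, one gets exactly $\tau$ applied to $\rho(\ell)(a')-(-1)^{|a||\ell'|}\rho(\ell')(a)+[\ell,\ell']$, that is, to $[a+\ell,a'+\ell']_{\widehat{L}}$, which is the bracket recalled in the construction of $\widehat{L}$.

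Then, by the universal property of the enveloping algebra of a dg-Lie algebra, $\tau$ extends uniquely to a morphism of dg-algebras $\widetilde{\sigma}\colon U(\widehat{L})\to\mathbf{End}_{\mathbf{k}}(A)$. I would check that $\widetilde{\sigma}$ annihilates the generators of $P$: for $\widehat{\ell}=a'\in A$ this reads $\lambda_{a}\lambda_{a'}=\lambda_{aa'}$, which holds, and for $\widehat{\ell}=\ell\in L$ it reads $\lambda_{a}\circ\rho(\ell)=\rho(a\ell)$, which is precisely the $A$-linearity of the anchor. Being multiplicative, $\widetilde{\sigma}$ then kills the whole two-sided ideal $P$, hence descends to $\sigma\colon\mathrm{U}_{\mathbf{k}/A}(L)=U^{+}(\widehat{L})/P\to\mathbf{End}_{\mathbf{k}}(A)$. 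Meanwhile the composite $A\hookrightarrow\widehat{L}\hookrightarrow U^{+}(\widehat{L})\twoheadrightarrow\mathrm{U}_{\mathbf{k}/A}(L)$ is a morphism of dg-algebras and endows $\mathrm{U}_{\mathbf{k}/A}(L)$ with its $A$-algebra structure (here $A$ maps to $\mathrm{U}_{\mathbf{k}/A}(L)$, not necessarily to its center, just as for $\mathbf{End}_{\mathbf{k}}(A)$ itself); one must note that the image of $1_{A}$ is a two-sided unit of $\mathrm{U}_{\mathbf{k}/A}(L)$ — a left unit by the defining relation of $P$ with $a=1_{A}$ together with the fact that $\widehat{L}$ generates the algebra, and a right unit because $[\widehat{\ell},1_{A}]_{\widehat{L}}=\rho(\ell)(1_{A})=0$ for every $\widehat{\ell}=a'+\ell$, so $\widehat{\ell}\cdot 1_{A}=1_{A}\cdot\widehat{\ell}$ already in $\mathrm{U}_{\mathbf{k}/A}(L)$. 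Since $\sigma(1_{A})=\lambda_{1_{A}}=\mathrm{id}_{A}$ and $\sigma$ composed with the structure map $A\to\mathrm{U}_{\mathbf{k}/A}(L)$ equals $\lambda$, $\sigma$ is a unital, $A$-linear morphism of algebras into $\mathbf{End}_{\mathbf{k}}(A)$, so $(\mathrm{U}_{\mathbf{k}/A}(L),\sigma)$ is an object of $\mathbf{alg}_{A/\mathbf{End}_{\mathbf{k}}(A)}$.

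For the adverb \emph{naturally}: a morphism $f\colon L\to L'$ of dg-Lie algebroids induces $\mathrm{id}_{A}\oplus f\colon\widehat{L}\to\widehat{L'}$, a morphism of dg-Lie algebras over $\mathbf{k}$ which intertwines $\tau$ and $\tau'$; functoriality of $U(-)$ and its compatibility with the augmentation ideals and with the ideals $P$ yield a morphism $\mathrm{U}_{\mathbf{k}/A}(f)$ of $A$-algebras, which, by the uniqueness in the universal property, satisfies $\sigma'\circ\mathrm{U}_{\mathbf{k}/A}(f)=\sigma$; hence $\mathrm{U}_{\mathbf{k}/A}(-)$ upgrades to a functor $\mathbf{Lie}_{\mathbf{k}/A}\to\mathbf{alg}_{A/\mathbf{End}_{\mathbf{k}}(A)}$. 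I do not expect a genuine obstacle here: the one computation deserving real care is the bracket-preservation for $\tau$ in the second paragraph, where the Koszul signs and the Lie-algebroid structure maps all come together; once $\tau$ is in hand everything else follows formally from the universal property of the enveloping algebra.
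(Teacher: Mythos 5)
Your proof is correct and follows essentially the same route as the paper: you construct the canonical representation $\widehat{L}\to\mathbf{End}_{\mathbf{k}}(A)$ (your $\tau=\lambda+\rho$ is exactly the paper's action $(a+\ell)\cdot a'=aa'+\rho(\ell)(a')$), verify it is a map of dg-Lie algebras, extend it by the universal property of the enveloping algebra, and check it kills the ideal $P$ using the $A$-linearity of $\rho$. Your additional checks (that $1_A$ becomes a two-sided unit, $A$-linearity of $\sigma$, and functoriality) are correct refinements of points the paper leaves implicit.
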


\begin{proof}
We define an action of $\widehat{L}$ on $A$ by the formula 
\[
(a+\ell). a' = aa' + \rho(\ell)(a'). 
\]
First we check that the action is a Lie action.
\begin{align*}
&(a_1+ \ell_1). [(a_2+\ell_2). a'] 
= (a_1+ \ell_1).(a_2a' +\rho(\ell_2)(a')) \\
&= a_1 a_2 a' + a_1 \rho(\ell_2)(a') + \rho(\ell_1)(a_2a')+\rho(\ell_1)\rho(\ell_2)(a') \\
&= a_1 a_2 a' + a_1 \rho(\ell_2)(a') + (-1)^{|a_2| . |\ell_1|} a_2 \rho(\ell_1)(a') + \rho(\ell_1)(a_2)a' \\
& \qquad \qquad + \rho(\ell_1) \rho(\ell_2)(a')
\end{align*}
Hence we get: 
\begin{align*}
a_1.(a_2.a')-(-1)^{|a_1|. |a_2|}a_2.(a_1.a')=0=[a_1, a_2].a'
\end{align*}
\begin{align*}
a_1.(\ell_2& .a')-(-1)^{|a_1|. |\ell_2|} \ell_2.(a_1.a') \\
&=a_1 \rho(\ell_2)(a')- (-1)^{|a_1|. |\ell_2|} ((-1)^{|a_1| . |\ell_2|} a_1 \rho(\ell_2)(a') + \rho(\ell_2)(a_1)a')  \\
&= - (-1)^{|a_1| . |\ell_2|} \rho(\ell_2)(a_1) = [a_1, \ell_2]. a'
\end{align*}
\begin{align*}
\ell_1.(a_2& .a')-(-1)^{|\ell_1|. |a_2|} a_2.(\ell_1.a') \\
&=(-1)^{|a_2| . |\ell_1|} a_2 \rho(\ell_1)(a') + \rho(\ell_1)(a_2)a'- (-1)^{|\ell_1|. |a_2|} a_2\rho(\ell_1)(a') \\
&=\rho(\ell_1)(a_2)a' = [\ell_1, a_2].a'
\end{align*}
\begin{align*}
\ell_1.(\ell_2& .a')-(-1)^{|\ell_1|. |\ell_2|} \ell_2.(\ell_1.a') \\
&=\rho(\ell_1) \rho(\ell_2)(a')-(-1)^{|\ell_1|. |\ell_2|}\rho(\ell_2)\rho(\ell_1)(a') =\rho([\ell_1, \ell_2])(a')
\end{align*}
Hence there is an induced algebra morphism $\sigma \colon \mathrm{U}^+(\widehat{L}) \rightarrow \mathbf{End}_{\mathbf{k}}(A)$.
Next we prove that $\sigma$ factors through the ideal $P$.
\begin{align*}
\sigma(a.\ell)(a')-\sigma(a \ell)(a') &= \sigma(a) \sigma(\ell)(a') - \sigma (a \ell)(a') \\
&= a. (\ell.a')-(a \ell).a' = a \rho(\ell)(a')-\rho(a \ell)(a')\\
&=0.
\end{align*} 
This yields an algebra morphism $\sigma \colon \mathrm{U}_{\textbf{k}/A}(L) \rightarrow \mathbf{End}_{\mathbf{k}}(A)$.
\end{proof}
We can see the anchor $\sigma$ in a slightly different way. Recall the following 
definition:
\begin{definition}
The algebra $\mathbf{Diff}_{\mathbf{k}}(A)$ of differential operators of $A$ is defined by $\mathbf{Diff}_{\mathbf{k}}(A)=\mathrm{U}_{\mathbf{k}/A}(\mathbb{T}_A)$.
\end{definition}
Any $(\mathbf{k}, A)$ dg-Lie algebroid defines a Lie algebroid map $\rho: L \rightarrow \mathbb{T}_A$. Then the map $\sigma$ is given by the composition
\[
\mathrm{U}(L) \rightarrow \mathrm{U}(\mathbb{T}_A) \simeq \mathbf{Diff}_{\mathbf{k}}(A) \rightarrow \mathbf{End}_{\mathbf{k}}(A).
\]

\begin{definition} \label{fix}
If $R$ is an object of $\mathbf{alg}_{A/ \mathbf{End}_\mathbf{k}(A)}$, we the set $\mathcal{P}(R)$ of primitive elements of $R$ by
\[
\mathcal{P}(R)=\{ r \in R, ra-(-1)^{|a| . |r|}ar= \sigma(r)(a) \}.
\]
\end{definition}

\begin{lemma}
For any object $R$ of  $\mathbf{alg}_{A/ \mathbf{End}_\mathbf{k}(A)}$, the $A$-module $\mathcal{P}(R)$ is naturally a Lie algebroid, the bracket being given by
\[
[r_1, r_2]=r_1 r_2-(-1)^{|r_1| . |r_2|} r_2 r_1.
\]
and the anchor being the restriction of $\sigma$.
\end{lemma}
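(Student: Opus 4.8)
The plan is to equip $\mathcal{P}(R)$ with all of its structure by restriction from $R$: the $A$-module structure from scalar multiplication, the $\mathbf{k}$ dg-Lie structure from the super-commutator of $R$, and the anchor from $\sigma$; everything then comes down to a few closure properties plus the third axiom. First I would check that $\mathcal{P}(R)$ is a dg-$A$-submodule of $R$. It is obviously a graded $\mathbf{k}$-submodule, the defining relation $ra-(-1)^{|a|.|r|}ar=\sigma(r)(a)$ being $\mathbf{k}$-linear in $r$. For stability under multiplication by a homogeneous $b\in A$ one writes $(br)a=b(ra)$, substitutes the primitivity relation for $r$, commutes $b$ past $a$ using graded-commutativity of $A$, and uses the $A$-linearity of $\sigma$ to recognise the result as $(-1)^{|a|.(|b|+|r|)}a(br)+\sigma(br)(a)$. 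For stability under the differential one applies $d$ to the primitivity relation, using that $\sigma$ is a chain map and that $da$ again lies in $A$; the terms featuring $\sigma(r)(da)$ on the two sides cancel because $r$ is already primitive, and what is left is exactly the primitivity relation for $dr$.

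Next I would show that $\sigma(r)\in\mathbb{T}_A$ for every $r\in\mathcal{P}(R)$, so that the restricted anchor really lands in $\mathbb{T}_A$. This is the Leibniz computation: expanding $\sigma(r)(aa')=r(aa')-(-1)^{|r|.(|a|+|a'|)}(aa')r$ and moving $r$ to the right one factor at a time via primitivity yields $\sigma(r)(a)\,a'+(-1)^{|a|.|r|}a\,\sigma(r)(a')$. Being the restriction of the $A$-linear dg-algebra morphism $\sigma$, the induced map $\mathcal{P}(R)\to\mathbb{T}_A$ is automatically $A$-linear, a chain map, and compatible with the (super)commutator brackets on both sides, hence a morphism of dg-Lie algebras over $\mathbf{k}$.

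The key step is that $\mathcal{P}(R)$ is closed under the super-commutator. Instead of a direct sign manipulation I would use the inner derivations $\mathrm{ad}_r\colon x\mapsto rx-(-1)^{|x|.|r|}xr$ of $R$: each $\mathrm{ad}_r$ is a graded derivation, $r\mapsto\mathrm{ad}_r$ satisfies $\mathrm{ad}_{[r_1,r_2]}=\mathrm{ad}_{r_1}\mathrm{ad}_{r_2}-(-1)^{|r_1|.|r_2|}\mathrm{ad}_{r_2}\mathrm{ad}_{r_1}$ (the graded Jacobi identity in the associative algebra $R$), and $\mathrm{ad}_r$ evaluated on an element of $A$ is precisely the left-hand side of the primitivity relation. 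For $r_1,r_2\in\mathcal{P}(R)$ and $a\in A$ this gives $\mathrm{ad}_{[r_1,r_2]}(a)=\mathrm{ad}_{r_1}\!\big(\sigma(r_2)(a)\big)-(-1)^{|r_1|.|r_2|}\mathrm{ad}_{r_2}\!\big(\sigma(r_1)(a)\big)$, and since $\sigma(r_2)(a)$ and $\sigma(r_1)(a)$ again lie in $A$, one more use of primitivity evaluates each term, producing $\sigma(r_1)\sigma(r_2)(a)-(-1)^{|r_1|.|r_2|}\sigma(r_2)\sigma(r_1)(a)=\sigma([r_1,r_2])(a)\in A$. Hence $[r_1,r_2]\in\mathcal{P}(R)$, and together with the previous paragraphs this makes $\mathcal{P}(R)$ a dg-Lie subalgebra of $R$ with a well-defined anchor.

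Finally I would verify the third axiom of a dg-Lie algebroid: for $\ell_1,\ell_2\in\mathcal{P}(R)$ and $a\in A$, expanding $[\ell_1,a\ell_2]$ and substituting the primitivity relation for $\ell_1$ against $a$ gives exactly $(-1)^{|a|.|\ell_1|}a[\ell_1,\ell_2]+\sigma(\ell_1)(a)\,\ell_2$. Naturality of $\mathcal{P}$ in $R$ is then immediate, since a morphism in $\mathbf{alg}_{A/\mathbf{End}_\mathbf{k}(A)}$ is an $A$-algebra map commuting with the anchors, hence carries primitive elements to primitive elements and respects both the bracket and the anchor. The one genuinely delicate point throughout is the Koszul-sign bookkeeping; the reformulation of the primitivity relation in terms of the inner derivations $\mathrm{ad}_r$ is what turns the crucial bracket-closure step from a sign chase into a one-line application of the graded Jacobi identity.
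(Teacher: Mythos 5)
Your proposal is correct, and its overall architecture is the same as the paper's: verify that $\mathcal{P}(R)$ is an $A$-submodule, that it is closed under the graded commutator, that $\sigma(r)$ is a graded derivation for primitive $r$ (so the restricted anchor lands in $\mathbb{T}_A$), and that the defect of $A$-linearity of the bracket is $\sigma(\ell_1)(a)\,\ell_2$; your submodule, Leibniz and third-axiom computations are exactly the paper's. Where you genuinely diverge is the bracket-closure step: the paper does a direct eight-term sign expansion, whereas you observe that $\mathrm{ad}_{[r_1,r_2]}=\mathrm{ad}_{r_1}\mathrm{ad}_{r_2}-(-1)^{|r_1|.|r_2|}\mathrm{ad}_{r_2}\mathrm{ad}_{r_1}$, that primitivity says $\mathrm{ad}_r|_A=\sigma(r)$, and that $\sigma$ is an algebra morphism, so $\mathrm{ad}_{[r_1,r_2]}(a)=[\sigma(r_1),\sigma(r_2)](a)=\sigma([r_1,r_2])(a)$. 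This is a legitimate shortcut, and it has the virtue of landing directly on the correct identity: note that the paper's displayed computation for this step ends with ``$=0$'' although the quantity computed is $\sigma([r_1,r_2])(a)$ (which is what primitivity of $[r_1,r_2]$ requires, and is nonzero in general), so your structural argument both shortens and clarifies that point. You also add two checks the paper leaves implicit — closure of $\mathcal{P}(R)$ under the differential (needed for it to be a dg-Lie algebroid, and your cancellation of the $\sigma(r)(da)$ terms is right, using that $\sigma$ is a chain map) and functoriality of $\mathcal{P}$ in $R$ — both of which are correct and harmless additions.
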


\begin{proof}
We proceed in several steps.
\par \medskip
\fbox{\textit{$\mathcal{P}(R)$ is an $A$-submodule of $R$}}
\par \medskip
If $r$ is in $\mathcal{P}(R)$, and $a, a'$ in $A$,
\begin{align*}
(a'r)a& - (-1)^{|a|. (|a'|+|r|)}a(a'r) \\
&= (-1)^{|a| . |r|} a'ar+ a' \sigma(r)(a)- (-1)^{|a| . (|a'|+|r|)} aa'r \\
&= \sigma(a'r)(a).
\end{align*}
\par \medskip
\fbox{\textit{$\mathcal{P}(R)$ is stable under the Lie bracket}}
\begin{align*}
[r_1, &r_2]a - (-1)^{|a| . (|r_1|+|r_2|)} a[r_1, r_2] \\
&= r_1 r_2a-(-1)^{|r_1| . |r_2|} r_2 r_1a -  (-1)^{|a| . (|r_1|+|r_2|)}  a r_1 r_2 \\
& \qquad + (-1)^{|a| . (|r_1|+|r_2|) + |r_1| . |r_2|} a r_2 r_1 \\
&= (-1)^{|a| . |r_2|} r_1 a r_2 + r_1 \sigma(r_2)(a) \\
&\qquad -  (-1)^{|r_1|. (|r_2|+|a|)} r_2 a r_1 - (-1)^{|r_1| . |r_2|} r_2 \sigma(r_1)(a)  \\
& \qquad - (-1)^{|a| . |r_2|} r_1a r_2 +  (-1)^{|a| . |r_2|} \sigma(r_1)(a) r_2 \\
& \qquad + (-1)^{|r_1|. (|r_2|+|a|)} r_2 a r_1 -  (-1)^{|r_1|. (|r_2|+|a|)} \sigma(r_2)(a) r_1\\
&= r_1 \sigma(r_2)(a) -  (-1)^{|r_1|. (|r_2|+|a|)} \sigma(r_2)(a)  r_1 \\
&\qquad - (-1)^{|r_1| . |r_2|} \left(r_2 \sigma(r_1)(a)-(-1)^{(|a|+|r_1|) . |r_2|} \sigma(r_1)(a)r_2\right) \\
&=0.
\end{align*}
\fbox{\textit{The restriction of $\sigma$ to $\mathcal{P}(R)$ is a derivation}}
\par \medskip
If $r$ is in $\mathcal{P}(R)$, 
\begin{align*}
\sigma(r)(aa') &= raa'-(-1)^{(|a|+|a'|) . |r|} aa'r \\
&= (ra-(-1)^{|a| . |r|}ar)a'+(-1)^{|a| . |r|} a(ra'-(-1)^{|a'|. |r|} a'r) \\
&= \sigma(r)(a)a'+(-1)^{|a| . |r|} a \sigma(r)(a').
\end{align*}
\fbox{\textit{Defect of $A$-linearity of the bracket}}
\begin{align*}
[r_1, a& r_2]-(-1)^{|a| . |r_1|}a \,[r_1, r_2]\\
&=r_1 a r_2 - (-1)^{|r_1| . (|a|+|r_2|)} ar_2 r_1 - (-1)^{|a| . |r_1|} a r_1 r_2 \\
& \qquad+ (-1)^{(|a|+|r_2|) . |r_1|} ar_2 r_1 \\
&= \sigma(r_1)(a) r_2.
\end{align*}
\end{proof}

\begin{remark}
In \cite[Remark 2.2]{calaque_pbw_2014}, the module of primitive elements of is defined by $\mathcal{P}(R)=R \times_{\mathbf{End}_{\mathbf{k}}(A)} \mathbf{Der}_{\mathbf{k}}(A)$. This definition is not adequate since $\mathcal{P}(R)$ is not always a Lie algebroid. 
\end{remark}

\begin{proposition} \label{ciment}
There is an adjunction
\[
\mathrm{U}_{\mathbf{k}/A} \colon \mathbf{Lie}_{\mathbf{k}/A} \,\begin{matrix}\longrightarrow \\[-0.3cm] \longleftarrow\end{matrix}\,
\mathbf{alg}_{A/ \mathbf{End}_\mathbf{k}(A)} \colon \mathcal{P}
\]
\end{proposition}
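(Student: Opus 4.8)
The plan is to exhibit, naturally in $L$ and in $(R,\sigma_R)$, a bijection
\[
\Psi\colon\mathrm{Hom}_{\mathbf{alg}_{A/\mathbf{End}_{\mathbf{k}}(A)}}\bigl(\mathrm{U}_{\mathbf{k}/A}(L),R\bigr)\;\longrightarrow\;\mathrm{Hom}_{\mathbf{Lie}_{\mathbf{k}/A}}\bigl(L,\mathcal{P}(R)\bigr),
\]
and to conclude by the standard characterisation of adjunctions. The map $\Psi$ will simply be restriction of an algebra morphism $g$ along the canonical map $L\hookrightarrow\widehat{L}\to\mathrm{U}^{+}(\widehat{L})\to\mathrm{U}_{\mathbf{k}/A}(L)$; its inverse $\Phi$ I would build from the universal property of the enveloping algebra of a $\mathbf{k}$-dg-Lie algebra. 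Throughout I would use that the image of $A$ in any object of $\mathbf{alg}_{A/\mathbf{End}_{\mathbf{k}}(A)}$ is graded-commutative (being a quotient of $A$), that $\mathrm{U}_{\mathbf{k}/A}(L)$ is unital with unit the image $\overline{1_A}$ of $1_A\in A\subset\widehat{L}$ (a consequence of the relations defining $P$ together with $\rho(\ell)(1)=0$), and that $A$ and $L$ generate $\mathrm{U}_{\mathbf{k}/A}(L)$ as a unital $\mathbf{k}$-algebra, since $\widehat{L}$ generates $\mathrm{U}^{+}(\widehat{L})$ as a non-unital one.

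To build $\Phi$, I would start from a dg-Lie algebroid morphism $f\colon L\to\mathcal{P}(R)$ and set $\widehat{f}\colon\widehat{L}\to R$, $\widehat{f}(a+\ell)=a\cdot 1_R+f(\ell)$, where $a\cdot1_R$ denotes the image of $a$ under the structural map $A\to R$. First I would check that $\widehat{f}$ is a morphism of $\mathbf{k}$-dg-Lie algebras, $R$ carrying its graded commutator bracket: on expanding $[\widehat{f}(a+\ell),\widehat{f}(a'+\ell')]$, the purely scalar term $[a\cdot1_R,a'\cdot1_R]$ vanishes by graded-commutativity of the image of $A$; the mixed terms $[a\cdot1_R,f(\ell')]$ and $[f(\ell),a'\cdot1_R]$ are evaluated using that $f(\ell)$ and $f(\ell')$ are primitive (Definition \ref{fix}) and that $\sigma_R\circ f=\rho$ because $f$ respects anchors, yielding $-(-1)^{|a||\ell'|}\rho(\ell')(a)\cdot1_R$ and $\rho(\ell)(a')\cdot1_R$; and $[f(\ell),f(\ell')]=f([\ell,\ell'])$ because $f$ is a Lie morphism. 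The sum reproduces $\widehat{f}$ applied to $[a+\ell,a'+\ell']=\rho(\ell)(a')-(-1)^{|a||\ell'|}\rho(\ell')(a)+[\ell,\ell']$. By the universal property of $\mathrm{U}$ for $\mathbf{k}$-dg-Lie algebras, $\widehat{f}$ extends to a morphism of unital dg-algebras $\mathrm{U}(\widehat{f})\colon\mathrm{U}(\widehat{L})\to R$, restricting to a non-unital morphism on $\mathrm{U}^{+}(\widehat{L})$. I would then check it annihilates $P$: for $a\in A$ and $\widehat{\ell}=a'+\ell$ one gets $\mathrm{U}(\widehat{f})(a\cdot\widehat{\ell})=(a\cdot1_R)(a'\cdot1_R+f(\ell))=aa'\cdot1_R+a\cdot f(\ell)$, which equals $\mathrm{U}(\widehat{f})(a\widehat{\ell})=\widehat{f}(aa'+a\ell)=aa'\cdot1_R+a\cdot f(\ell)$ by $A$-linearity of $f$. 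This yields $\Phi(f)=g\colon\mathrm{U}_{\mathbf{k}/A}(L)\to R$, and I would verify $g$ is a morphism in $\mathbf{alg}_{A/\mathbf{End}_{\mathbf{k}}(A)}$ — it is unital and $A$-linear by construction, and the anchor identity $\sigma_R\circ g=\sigma_{\mathrm{U}_{\mathbf{k}/A}(L)}$ need only be checked on the generating set $A\cup L$, where both sides act respectively by left multiplication and by $\rho$ (again using $\sigma_R\circ f=\rho$).

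Finally I would check that $\Psi$ actually lands in $\mathrm{Hom}_{\mathbf{Lie}_{\mathbf{k}/A}}(L,\mathcal{P}(R))$ and that $\Phi$ and $\Psi$ are mutually inverse. For $g\colon\mathrm{U}_{\mathbf{k}/A}(L)\to R$ and $\ell\in L$ with image $\overline{\ell}$, the identity $\overline{\ell}\,\overline{a}-(-1)^{|\ell||a|}\overline{a}\,\overline{\ell}=\overline{\rho(\ell)(a)}$ in $\mathrm{U}_{\mathbf{k}/A}(L)$ (coming from $[\ell,a]=\rho(\ell)(a)$ in $\widehat{L}$ and from $P$) maps under $g$, together with $\sigma_R\circ g=\sigma_{\mathrm{U}_{\mathbf{k}/A}(L)}$ and the value $\rho(\ell)$ of the latter anchor on $\overline{\ell}$, to exactly the primitivity relation for $g(\overline{\ell})$; hence $g|_L$ factors through $\mathcal{P}(R)$, and it is visibly $A$-linear, anchor-compatible, and bracket-preserving. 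Then $\Psi\circ\Phi=\mathrm{id}$ because $\mathrm{U}(\widehat{f})$ restricts to $\widehat{f}$ on $\widehat{L}$, so $\Phi(f)|_L=f$; and $\Phi\circ\Psi=\mathrm{id}$ because $\Phi(g|_L)$ and $g$ are morphisms of unital $A$-algebras agreeing on the generating set $A\cup L$. Naturality in $L$ and $R$ is immediate from the constructions. I expect the main obstacle to be the verification that $\widehat{f}$ is a Lie-algebra morphism, carried out alongside the bookkeeping around the ideal $P$, the unit $\overline{1_A}$, and the fact that $A$ need not be central in $R$; compatibility with differentials is automatic, since every map in sight is a degree-zero chain map, so I would not dwell on it.
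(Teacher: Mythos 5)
Your proposal is correct and follows essentially the same route as the paper: extend a Lie algebroid map $L\to\mathcal{P}(R)$ to a $\mathbf{k}$-dg-Lie map $\widehat{L}\to R$ (using primitivity and anchor compatibility for the mixed brackets), pass through the enveloping algebra of $\widehat{L}$, kill the ideal $P$, and in the other direction restrict an algebra map to $L$ and check primitivity via the relation $\ell a-(-1)^{|a||\ell|}a\ell=\rho(\ell)(a)$. The only differences are that you spell out a few points the paper leaves implicit (unitality of $\overline{1_A}$, the anchor identity for the induced map checked on the generators $A\cup L$, and naturality), which is fine.
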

\begin{proof}
Let $L$ be a Lie algebroid, $R$ an object of $\mathbf{alg}_{A/ \mathbf{End}_\mathbf{k}(A)}$, and assume to be given a morphism $\phi$ of Lie algebroids from $L$ to $\mathcal{P}(R)$. We can consider the composite morphism
\[
L \rightarrow \mathcal{P}(R) \hookrightarrow R
\]
Since $\mathcal{P}(R)$ contains $A$, we have a morphism
$
\widehat{L} \rightarrow R.
$
We claim that it is a morphism Lie algebras over $\mathbf{k}$ (if we endow $R$ with the bracket given by the commutator). Indeed, we have 
\begin{align*}
&\phi(a_1 + \ell_1) \phi(a_2 + \ell_2)- (-1)^{|a_1| . |a_2|}
\phi(a_2) \phi(a_1)-(-1)^{|a_1| . |\ell_2|}
\phi(\ell_2) \phi(a_1)\\
&\qquad
-(-1)^{|\ell_1| . |a_2|}
\phi(\ell_1) \phi(a_2)-(-1)^{|\ell_1| . |\ell_2|}
\phi(\ell_2) \phi(\ell_1) \\
&= a_1 \phi(\ell_2) -(-1)^{|a_1| . |\ell_2|} \phi(\ell_2) a_1+ \phi(\ell_1) a_2 - (-1)^{|\ell_1| . |\ell_2|}a_2 \phi(\ell_1) \\
& \qquad + \phi(\ell_1 \ell_2)-(-1)^{|\ell_1| . |\ell_2|}\phi(\ell_2)\phi(\ell_1) \\
&=- (-1)^{|a_1| . |\ell_2|} \sigma(\phi(\ell_2)) (a_1) + \sigma(\phi(\ell_1))(a_2) + \phi([\ell_1, \ell_2]) \\
&=- (-1)^{|a_1| . |\ell_2|} \rho(\ell_2)(a_1) + \rho(\ell_1)(a_2) + \phi([\ell_1, \ell_2]) \\
&=\phi(- (-1)^{|a_1| . |\ell_2|} \rho(\ell_2)(a_1) + \rho(\ell_1)(a_2) +[\ell_1, \ell_2] \\
&= \phi([a_1+\ell_1, a_2+\ell_2]).
\end{align*}
Hence we get a morphism of algebras $\overline{\phi} \colon \mathrm{U}^+(\widehat{L}) \rightarrow R$. It is immediate to check that this morphisms factors through $P$. Hence it induces a morphism $\varphi \colon \mathrm{U}_{\mathbf{k}/A}(L) \rightarrow R$.
\par \medskip
Conversely, assume to be given an algebra morphism $f \colon \mathrm{U}_{\mathbf{k}/A}(L) \rightarrow R$. We consider the composition
\[
L \rightarrow \mathrm{U}_{\mathbf{k}/A}(L) \xrightarrow{f} R
\]
Let us proves that it factors through $\mathcal{P}(R)$. We compute: 
\begin{align*}
f(\ell)a-&(-1)^{|a| . |\ell|}af(\ell)=f(\ell)f(a)-(-1)^{|a| . |\ell|}f(a)f(\ell) \\
&=f(\ell.a-(-1)^{|a| . |\ell|}a.\ell) = f(\rho(\ell)(a)) = \rho(\ell)(a) = \sigma(\ell)(a).
\end{align*}
It is straightforward that these construction are mutually inverse, and give an isomorphism
\[
\mathrm{Hom}_{\mathbf{Lie}_{\mathbf{k}/A}}(L, \mathcal{P}(R)) \simeq \mathrm{Hom}_{\mathbf{alg}_{A/ \mathbf{End}_\mathbf{k}(A)}} (\mathrm{U}_{\mathbf{k}/A}(L), R).
\]
\end{proof}
\subsection{Coproduct and jets}

The universal enveloping algebra $\mathrm{U}_{\mathbf{k}/A}(L)$ carries a cocommutative coproduct. We follow the construction given in \cite[\S 2.1]{calaque_pbw_2014} but provide a slightly more conceptual framework.
\par \medskip
Let us introduce some notation: 
\begin{enumerate}
\item[--] $I$ is the right ideal of $\mathrm{U}_{\mathbf{k}/A}(L) \otimes_{\mathbf{k}} \mathrm{U}_{\mathbf{k}/A}(L)$ generated by elements $a \otimes 1 - 1 \otimes a$ for $a$ in $A$.
\item[--] $N$ is the normalizer of the right ideal $I$ of $\mathrm{U}_{\mathbf{k}/A}(L) \otimes_{\mathbf{k}} \mathrm{U}_{\mathbf{k}/A}(L)$.
\item[--] $\widetilde{R}$ is the sub A-module of $\mathrm{U}_{\mathbf{k}/A}(L) \otimes_{\mathbf{k}} \mathrm{U}_{\mathbf{k}/A}(L)$ generated by elements $a \otimes 1$ and $\ell \otimes 1 + 1 \otimes \ell$ for $a$ in $A$ and $\ell$ in $L$.
\item[--] $R_L$ is the image if $\widetilde{R}_L$ in $\mathrm{U}_{\mathbf{k}/A}(L) \otimes_{A} \mathrm{U}_{\mathbf{k}/A}(L)$, that is $R_L=\widetilde{R}_L/I$.
\item[--] $\pi \colon \mathrm{U}_{\mathbf{k}/A}(L) \otimes_{\mathbf{k}} \mathrm{U}_{\mathbf{k}/A}(L) \rightarrow \mathrm{U}_{\mathbf{k}/A}(L) \otimes_{\mathbf{k}} A$ is the map $\mathrm{id} \otimes_{\mathbf{k}} \sigma$, it is a morphism of $\mathbf{k}$-algebras.
\item[--] $\tau \colon \mathrm{U}_{\mathbf{k}/A}(L) \otimes_{\mathbf{k}} \mathrm{U}_{\mathbf{k}/A}(L) \rightarrow \mathbf{End}_{\mathbf{k}}(A)$ is the $A$-linear morphism \footnote{Notice that $\tau$ is not an algebra morphism.} defined by $\tau(x \otimes y)(a)=\sigma(x)(a) \sigma(y)(1).$
\end{enumerate}
Then we have the following result: 
\begin{lemma}
Given a $(\mathbf{k}, A)$-algebroid $L$, the following properties hold: 
\begin{enumerate}
\item[(i)] $\widetilde{R}_L$ is included in $N$, and $R_L$ carries a natural $A$-algebra structure.
\item[(ii)] $\pi(\widetilde{R}_L \cap I)=\{0\}$.
\item[(iii)] The restriction of $\tau$ to $\widetilde{R}_L$ factors through $R_L$ and induces an $A$-linear algebra morphism (that will still be denoted by $\tau$) from $R_L$ to $\mathbf{End}_{\mathbf{k}}(A)$.
\end{enumerate}
\end{lemma}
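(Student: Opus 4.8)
Throughout write $U=\mathrm{U}_{\mathbf{k}/A}(L)$, let $\varepsilon\colon U\to A$ be the $\mathbf{k}$-linear map $\varepsilon(x)=\sigma(x)(1)$ — it restricts to the identity on $A$ and vanishes on $L$, since $\rho(\ell)(1)=0$ — and let $V\subseteq U$ be the $\mathbf{k}$-subalgebra generated by $L$; an easy induction shows $\varepsilon(V)\subseteq\mathbf{k}\cdot 1_A$ and that $\varepsilon|_V$ is the augmentation of $V$, hence a $\mathbf{k}$-algebra morphism. For \emph{part (i)}: $N$ is a $\mathbf{k}$-subalgebra of $U\otimes_{\mathbf{k}}U$, a direct check gives $A\otimes 1\subseteq N$, and $\widetilde R_L$ is generated as a subalgebra by $A\otimes 1$ together with the elements $\ell\otimes 1+1\otimes\ell$, so it suffices to show each $\ell\otimes 1+1\otimes\ell$ normalises $I$; as $I$ is a right ideal this reduces to $(\ell\otimes 1+1\otimes\ell)(a\otimes 1-1\otimes a)\in I$. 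Expanding this product in $U\otimes_{\mathbf{k}}U$ and substituting $\ell a=(-1)^{|\ell||a|}a\ell+\rho(\ell)(a)$ in both tensor legs, the result reorganises as
\[
(-1)^{|\ell||a|}(a\otimes 1-1\otimes a)(\ell\otimes 1+1\otimes\ell)+\bigl(\rho(\ell)(a)\otimes 1-1\otimes\rho(\ell)(a)\bigr),
\]
and both summands lie in $I$. Hence $\widetilde R_L\subseteq N$, so $\widetilde R_L+I$ is a subalgebra of $U\otimes_{\mathbf{k}}U$ in which $I$ is two-sided, $R_L=(\widetilde R_L+I)/I$ is a $\mathbf{k}$-algebra, and the image of $A\otimes 1$ makes it an $A$-algebra.

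The engine for (ii) and (iii) is a single contraction map $\nu\colon U\otimes_{\mathbf{k}}U\to U$, $\nu(x\otimes y)=(-1)^{|x||y|}\varepsilon(y)\,x$ (contract the right leg against $\varepsilon$, graded‑twisted). Using $\varepsilon(ay)=a\,\varepsilon(y)$ and the graded commutativity of $A$, a short sign computation shows $\nu$ kills the generators $(a\otimes 1-1\otimes a)(u\otimes v)$ of $I$, so $\nu(I)=0$. On the other hand every element of $\widetilde R_L$ has a presentation $\sum_k x_k'\otimes x_k''$ with all second legs in $V$ (clear on the generators, and stable under the product of $U\otimes_{\mathbf{k}}U$, which keeps second legs inside $V$); since $\varepsilon$ is $\mathbf{k}$-valued on $V$, the Koszul signs evaporate and for $x\in\widetilde R_L$ one gets
\[
(\mathrm{id}_U\otimes\varepsilon)(x)=\nu(x)\otimes 1\ \text{in }U\otimes_{\mathbf{k}}A,\qquad \tau(x)=\sigma\bigl(\nu(x)\bigr)\ \text{in }\mathbf{End}_{\mathbf{k}}(A).
\]
For the same reason $\nu|_{\widetilde R_L}\colon\widetilde R_L\to U$ is an algebra morphism: in the expansion of $\nu(xy)$ only terms with $\varepsilon(x_k'')\neq 0\neq\varepsilon(y_l'')$ survive, for these all second‑leg degrees are $0$, so no sign enters and $\nu(xy)=\nu(x)\nu(y)$ follows from multiplicativity of $\varepsilon|_V$; moreover $\nu(a\otimes 1)=a$ and $\nu(\ell\otimes 1+1\otimes\ell)=\ell$.

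Now (ii) and (iii) follow formally. Since $\widetilde R_L\cap I\subseteq I$ and $\nu(I)=0$, the first display gives $\pi(\widetilde R_L\cap I)=\{\nu(x)\otimes 1: x\in\widetilde R_L\cap I\}=\{0\}$, which is (ii), and $\tau(\widetilde R_L\cap I)=\sigma(\nu(\widetilde R_L\cap I))=\{0\}$, so $\tau|_{\widetilde R_L}$ factors through $R_L$. As $\nu|_{\widetilde R_L}$ is an algebra morphism killing $\widetilde R_L\cap I$, it descends to an algebra morphism $\overline\nu\colon R_L\to U$ carrying the class of $a\otimes 1$ to $a$; combining with $\tau|_{\widetilde R_L}=\sigma\circ\nu|_{\widetilde R_L}$, the induced map $R_L\to\mathbf{End}_{\mathbf{k}}(A)$ is exactly $\sigma\circ\overline\nu$, which is an $A$-linear algebra morphism because $\sigma$ is one by the lemma exhibiting $\mathrm{U}^+_{\mathbf{k}/A}(L)$ as an object of $\mathbf{alg}_{A/\mathbf{End}_{\mathbf{k}}(A)}$ and $\overline\nu$ respects $A$ and products. (As a by‑product $\overline\nu$ retracts the coproduct, so the latter is injective.)

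I expect the main obstacle to be the contraction step of the second paragraph rather than any conceptual point: checking $\nu(I)=0$ with all Koszul signs in place, verifying that the presentations of elements of $\widetilde R_L$ can be taken with second legs in $V$ and that this is preserved under multiplication, and pinning down that $\varepsilon$ is precisely the scalar augmentation of $V$ (so the signs disappear and $\nu|_{\widetilde R_L}$ is multiplicative) — these sign‑ and bookkeeping‑heavy verifications carry the whole lemma. Part (i) is of the same nature: it is just a careful reorganisation of the algebroid relation inside $U\otimes_{\mathbf{k}}U$, and once it and the contraction identities are secured, (ii) and (iii) are immediate.
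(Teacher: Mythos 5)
Your proof is correct and takes essentially the same route as the paper: part (i) is the same normalizer computation, and your contraction $\nu$ (counit against the second tensor leg) is just an explicit version of the paper's observation that $\pi$ sends $\widetilde{R}_L$ into $\mathrm{U}_{\mathbf{k}/A}(L)\subseteq \mathrm{U}_{\mathbf{k}/A}(L)\otimes_{\mathbf{k}}A$ while annihilating $I$ there, after which both arguments factor $\tau$ as $\sigma$ composed with the induced algebra map $R_L\to \mathrm{U}_{\mathbf{k}/A}(L)$. Your reading of $\widetilde{R}_L$ as multiplicatively closed (so that $R_L$ really is an $A$-algebra, as needed later for the coproduct) is the interpretation the paper leaves implicit, and it affects nothing else in the argument.
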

\begin{proof}
(i) We compute
\begin{align*}
(\ell \otimes 1& + 1 \otimes \ell). (a \otimes 1-1 \otimes a) \\
&= \ell a \otimes 1- \ell \otimes a + (-1)^{|a| . |\ell|} a \otimes \ell  - 1 \otimes \ell a \\
&=(-1)^{|a| . |\ell|} (a \ell \otimes 1 - 1 \otimes a \ell) + (\rho(\ell)(a) \otimes 1-1 \otimes \rho(\ell)(a)) \\
& \qquad+((-1)^{|a| . |\ell|} a \otimes \ell-\ell \otimes a ) \\
&=(-1)^{|a| . |\ell|}  (a \otimes 1-1 \otimes a). (\ell \otimes 1+1 \otimes \ell) \\
& \qquad +(\rho(\ell)(a) \otimes 1-1 \otimes \rho(\ell)(a)) \\
& \in I.
\end{align*}
In the same way, 
\begin{align*}
a \otimes 1. (&a' \otimes 1-1 \otimes a') \\
&= aa' \otimes 1 -a \otimes a' = aa' \otimes 1 -1 \otimes aa' + 1 \otimes aa'-a \otimes a' \\
&= (aa' \otimes 1-1 \otimes aa')+(1 \otimes a-a \otimes 1). 1 \otimes a'.
\end{align*}
Since $I$ is the right ideal, we get the first point.
\par \medskip
(ii) The algebra $\pi(R_L)$ lies inside the subalgebra $\mathrm{U}_{\mathbf{k}/A}$ of $\mathrm{U}_{\mathbf{k}/A} \otimes_{\mathbf{k}} A$. We have a diagram
\[
\xymatrix{ \mathrm{U}_{\mathbf{k}/A}(L) \ar@{^{(}->}[r] \ar[rd]^-{\sim} & \mathrm{U}_{\mathbf{k}/A}(L) \otimes_{\mathbf{k}} A \ar[d] \\
& \mathrm{U}_{\mathbf{k}/A}(L) \otimes_A A
}
\]
Since $\pi(I)$ lies in the kernel of the vertical arrow, we get 
\[
\mathrm{U}_{\mathbf{k}/A} \cap \pi(I)=\{0\}.
\]
(iii) Thanks to (ii), there is an algebra morphism from $R_L$ to $\mathrm{U}_{\mathbf{k}/A}(L)$ making the diagram below commutative: 
\[
\xymatrix{
\widetilde{R}_L \, \ar[d] \ar@{^{(}->}[r] &\mathrm{U}_{\mathbf{k}/A}(L) \otimes_{\mathbf{k}} \mathrm{U}_{\mathbf{k}/A}(L) \ar@/^2pc/[rr]^-{\tau} \ar[r]^-{\pi} \ar[d] & \mathrm{U}_{\mathbf{k}/A}(L) \otimes_{\mathbf{k}} A \ar[r] & \mathbf{End}_{\mathbf{k}}(A) \\
R_L\, \ar@{-->}@/_2pc/[rr] \ar@{^{(}->}[r] & \mathrm{U}_{\mathbf{k}/A}(L) \otimes_{A} \mathrm{U}_{\mathbf{k}/A}(L)  & \mathrm{U}_{\mathbf{k}/A}(L) \ar@{^{(}->}[u] \ar[ru]_-{\sigma} &  
}
\]
\par \bigskip
\par \bigskip
This finishes the proof since $\sigma$ is an $A$-linear algebra morphism.
\end{proof}
The previous lemma allows to consider the algebra $R_L$ as an object of the category $\mathbf{alg}_{A/ \mathbf{End}_\mathbf{k}(A)}$.
\begin{lemma}
The map from $L$ to $\mathrm{U}_{\mathbf{k}/A}(L) \otimes_{A} \mathrm{U}_{\mathbf{k}/A}(L)$ given by the formula $\ell \mapsto \ell \otimes 1 + 1 \otimes \ell$ factors through a Lie algebroid morphism with values in the primitive elements $\mathcal{P}(R_L)$ of $R_L$.
\end{lemma}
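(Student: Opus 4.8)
The plan is to check by hand that the composite of the $\mathbf{k}$-linear map $\delta\colon L\to\mathrm{U}_{\mathbf{k}/A}(L)\otimes_{\mathbf{k}}\mathrm{U}_{\mathbf{k}/A}(L)$, $\ell\mapsto\ell\otimes1+1\otimes\ell$, with the projection $\widetilde{R}_L\twoheadrightarrow R_L=\widetilde{R}_L/I$ is a morphism of Lie algebroids landing inside $\mathcal{P}(R_L)$. Write $\mathrm{U}=\mathrm{U}_{\mathbf{k}/A}(L)$. Since $\ell\otimes1+1\otimes\ell$ is by construction one of the generators of $\widetilde{R}_L$, the map $\delta$ lands in $\widetilde{R}_L$ and descends to $\overline{\delta}\colon L\to R_L$; it manifestly commutes with the differentials (as $d(\ell\otimes1+1\otimes\ell)=d\ell\otimes1+1\otimes d\ell$), so what remains is to show that $\overline{\delta}$ is $A$-linear, preserves the Lie bracket, is compatible with the anchors, and takes values in $\mathcal{P}(R_L)$. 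Every computation will be carried out inside the graded algebra $\mathrm{U}\otimes_{\mathbf{k}}\mathrm{U}$, using the Koszul rule $(1\otimes y)(x\otimes1)=(-1)^{|x||y|}x\otimes y$, and will be reduced to three facts already available: $L\to\mathrm{U}$ is a morphism of graded $\mathbf{k}$-Lie algebras, and $a\ell=a\cdot\ell$, $\ell a-(-1)^{|a||\ell|}a\ell=\rho(\ell)(a)$ hold in $\mathrm{U}$ for $a\in A$ and $\ell\in L$.

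For $A$-linearity I would note that $\delta$ itself is \emph{not} $A$-linear into $\widetilde{R}_L$, but becomes so modulo $I$: a one-line computation gives $\delta(a\ell)-(a\otimes1)\delta(\ell)=1\otimes a\ell-a\otimes\ell=-(a\otimes1-1\otimes a)(1\otimes\ell)\in I$, hence $\overline{\delta}(a\ell)=a\,\overline{\delta}(\ell)$ in $R_L$. For the bracket, expanding the graded commutator $[\delta(\ell_1),\delta(\ell_2)]$ with the Koszul rule, the cross terms $\ell_1\otimes\ell_2$ and $\ell_2\otimes\ell_1$ cancel and one is left with $[\ell_1,\ell_2]\otimes1+1\otimes[\ell_1,\ell_2]=\delta([\ell_1,\ell_2])$ already in $\mathrm{U}\otimes_{\mathbf{k}}\mathrm{U}$. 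For the anchors, recall that the anchor of $R_L$ in $\mathbf{alg}_{A/\mathbf{End}_{\mathbf{k}}(A)}$ is the map $\tau$ of the preceding lemma, with $\tau(x\otimes y)(a)=\sigma(x)(a)\,\sigma(y)(1)$; since $\sigma(1)=\mathrm{id}_A$ and $\sigma(\ell)=\rho(\ell)$ is a derivation (so $\rho(\ell)(1)=0$), this gives $\tau(\delta(\ell))(a)=\rho(\ell)(a)$, that is, $\tau\circ\overline{\delta}$ is the anchor $\rho$ of $L$.

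Finally, to see that $\overline{\delta}$ lands in $\mathcal{P}(R_L)$, I would verify the defining relation of Definition \ref{fix} relative to $\tau$: using $(1\otimes\ell)(a\otimes1)=(-1)^{|a||\ell|}a\otimes\ell$ and the commutation relation recalled above,
\[
\delta(\ell)(a\otimes1)-(-1)^{|a||\ell|}(a\otimes1)\delta(\ell)=\bigl(\ell a-(-1)^{|a||\ell|}a\ell\bigr)\otimes1=\rho(\ell)(a)\otimes1,
\]
and the class of $\rho(\ell)(a)\otimes1$ in $R_L$ is exactly the image of $\rho(\ell)(a)=\tau(\overline{\delta}(\ell))(a)$ under the structure map $A\to R_L$ (which sends $a$ to the class of $a\otimes1$). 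Combining these steps shows that $\overline{\delta}\colon L\to\mathcal{P}(R_L)$ is a morphism of Lie algebroids through which the given map factors. I do not anticipate a serious obstacle: the argument is pure bookkeeping with Koszul signs and with the two evident copies of $A$ inside $\mathrm{U}\otimes_{\mathbf{k}}\mathrm{U}$; the only point that needs genuine care is the $A$-linearity, which really does require passing to the quotient by $I$, whereas the bracket and anchor identities already hold on the nose in $\mathrm{U}\otimes_{\mathbf{k}}\mathrm{U}$.
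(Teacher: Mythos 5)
Your proposal is correct and follows essentially the same route as the paper: a direct sign computation in $\mathrm{U}_{\mathbf{k}/A}(L)\otimes_{\mathbf{k}}\mathrm{U}_{\mathbf{k}/A}(L)$ showing that $\ell\otimes 1+1\otimes\ell$ satisfies the primitivity relation (yielding $\rho(\ell)(a)\otimes 1$) and that the cross terms cancel in the graded commutator, so the bracket is preserved. You additionally spell out the $A$-linearity modulo $I$ and the compatibility with the anchor $\tau$, which the paper leaves implicit; these checks are correct and only make the argument more complete.
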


\begin{proof}
The element $\ell \otimes 1 + 1 \otimes \ell$ is primitive because
\begin{align*}
(\ell \otimes 1 & + 1 \otimes \ell) . a \otimes 1 - (-1)^{|a| . |\ell|} a \otimes 1 . (\ell \otimes 1 +1 \otimes \ell) \\
&= (\ell a -(-1)^{|a| . |\ell|} a\ell ) \otimes 1 
= \rho(\ell)(a) \otimes 1 
= \tau (\ell \otimes 1 + 1 \otimes \ell)(a)
\end{align*}
We check that the morphism $\ell \rightarrow \ell \otimes 1 + 1 \otimes \ell$ is a Lie algebroid morphism:
\begin{align*}
&(\ell_1 \otimes 1  + 1 \otimes \ell_1).(\ell_2 \otimes 1  + 1 \otimes \ell_2) \\
& \qquad \qquad -(-1)^{|\ell_1| . |\ell_2|} (\ell_2 \otimes 1  + 1 \otimes \ell_2).(\ell_1 \otimes 1  + 1 \otimes \ell_1) \\
&= (\ell_1 \ell_2 \otimes 1 - (-1)^{|\ell_1| . |\ell_2|} \ell_2 \ell_1)  \otimes 1 + 1 \otimes (\ell_1 \ell_2 \otimes 1 - (-1)^{|\ell_1| . |\ell_2|} \ell_2 \ell_1)\\
&= [\ell_1, \ell_2] \otimes 1+ 1 \otimes [\ell_1, \ell_2].
\end{align*}
\end{proof}
As a corollary, using Proposition \ref{ciment}, the map $\ell \mapsto \ell \otimes 1 + 1 \otimes \ell$ from $L$ to $\mathcal{P}(L_R)$ extends uniquely to a morphism $\mathrm{U}_{\mathbf{k}/A}(L) \rightarrow R_L$ in $\mathbf{alg}_{A/\mathbf{End}_{\mathbf{k}}(A)}$. In particular we have a morphism
\[
\Delta \colon \mathrm{U}_{\mathbf{k}/A}(L) \rightarrow \mathrm{U}_{\mathbf{k}/A}(L) \otimes_A \mathrm{U}_{\mathbf{k}/A}(L)
\]
and it is a straightforward calculation to check that it endows $\mathrm{U}_{\mathbf{k}/A}(L)$ with a cocommutative coproduct in the category of $A$-modules, the counit being given by the map $x \rightarrow \sigma(x)(1)$.
\begin{definition}
The jet algebra $\mathrm{J}_{\mathbf{k}/A}(L)$ of a $(\mathbf{k}, A)$ dg-Lie algebroid is the $A$-module $\mathrm{Hom}_{A}(\mathrm{U}_{\mathbf{k}/A}(L), A)$.
\end{definition}
The dual (as left $A$-modules) of the coproduct on $\mathrm{U}_{\mathbf{k}/A}(L)$ endows $\mathrm{J}_{\mathbf{k}/A}(L)$ with an algebra structure that turns it into a cdga in the category of left $A$-modules.
%
%

\subsection{Generalities on bimodules} \label{bimod} $ $
\par \medskip
Let $\textbf{corr}_A$ denote the category of $(A, A)$-bimodules, it is a monoidal (but not symmetric!) category whose unit element is the algebra $A$, considered as a bimodule over itself. If we forget the monoidal structure, it is the category of dg-modules over $A \otimes_{\mathbf{k}} A^{\mathrm{op}}$, so it is abelian.
\par \medskip
The category $\textbf{corr}_A$ has a natural duality functor, that we denote by $\mathbb{D}$, which is defined as follows: for any bimodule $K$, $\mathbb{D}(K)$ is the dual of $K$ considered as a left $A$-module, that is applications $\phi \colon K \rightarrow A$ such that $\phi(ak)= a \phi(k)$. The left and right actions are defined by
\[
\begin{cases}
(a \star \phi) (k) = \phi(ka) \\
(\phi \star a) (k) = \phi(k) a
\end{cases}
\]
Any object $K$ of $\textbf{corr}_A$ defines an endofunctor of $\mathbf{mod}_A$ defined by 
\[
M \rightarrow K \otimes_A M
\]
For any objects $K_1$ and $K_2$ of $\textbf{corr}_A$, we have of course
\[
(K_1 \otimes_A K_2) \otimes_AM \simeq K_1 \otimes_A (K_2 \otimes_A M).
\]
Any $A$-module $M$ defines trivially an object of $\textbf{corr}_A$ with the right action given by $ma=(-1)^{|m|.|a|}am$. In this case the associated functor is nothing but the usual tensor product.
\par \medskip
\raisebox{0.08cm}{\danger} If $K$ is an object of $\textbf{corr}_A$ and $M$ is an $A$-module, then $M$ can be considered as a bimodule as we just explained, so $K \otimes_A M$ is also a bimodule. On the other hand, since $K \otimes_A M$ is a left $A$-module, we can also turn it in a bimodule, which is \textit{not} the same as the previous one. This implies that one should be careful in the calculations.
\par \medskip
Any unital $A$-algebra $R$ defines naturally an object of $\textbf{corr}_A$, since there is a natural morphism from $A$ to $R$. Here we do not require that $A$ is central in the algebra. This case is prototypical of universal enveloping algebras of Lie algebroids: if $L$ is in $\mathbf{Lie}_{\mathbf{k}/A}$ the relation
\[
xa-(-1)^{|a| . |x|} ax = \rho(x)(a)
\] 
for $x$ in $L$ shows that $A$ is in general never central in $\mathrm{U}_{\mathbf{k}/A}(L)$, except when the anchor map is zero.
The corresponding forgetful functor 
\[
\mathbf{alg}_A \rightarrow \mathbf{corr}_A
\] 
admits a left adjoint, simply given by the tensor algebra (in bimodules).
\par \medskip
For any bimodule $K$ and anjy $a$ in $A$, we define the endomorphism $\delta_a$ of $K$ by
\[
\delta_a \colon k \mapsto ak - (-1)^{|a|.|k|} ka.
\]
\begin{definition}
An object $K$ of $\mathbf{corr}_A$ is nilpotent if for any $k$ in $K$ there exists an integer $n$ such that for any $a_1, \ldots, a_n$ in $A$, 
\[
\delta_{a_1} \circ \ldots \circ \delta_{a_n}(k)=0. 
\]
If the number $n$ can be chosen independently of $a$, we say that $K$ is nilpotent of index $\leq n$. We denote by $\mathbf{corr}_A^{\, \textrm{nilp}}$ (resp. $\mathbf{corr}_A^{\, \leq n}$) the full subcategory of $\mathbf{corr}_A$ consisting of nilpotent bimodules (resp. of nilpotent bimodules of order $\leq n$). 
\end{definition}
Note that objects of $\mathbf{corr}_A^{\,0}$ are exactly bimodules associated to $A$-modules. In the sequel, we will mainly deal with bimodules or order $\leq 1$.
\par \medskip
The category, $\mathbf{corr}_A$ carries an involution: for any bimodule $K$, $K^{\mathrm{op}}$ is defined as follows: as a $\mathbf{k}$-vector space it is $K$, and the left and right actions are defined by
\[
\begin{cases}
a \heartsuit k = (-1)^{|a|. |k|}\, ka \\
k \heartsuit a = (-1)^{|a|. |k|}\, ak
\end{cases}
\]
A bimodule is called \textit{symmetric} if it is isomorphic to its opposite.
\par \medskip
\raisebox{0.08cm}{\danger} $\mathbb{D}(K)^{\mathrm{op}}$ is not necessarily isomorphic to  $\mathbb{D}(K^{\mathrm{op}})$. Indeed, if we consider only the underlying $\mathbf{k}$-vector spaces, the former is the \textit{left} dual of $K$, and the latter the \textit{right} dual.

\section{Atiyah bimodules}

\subsection{Construction via anchored modules}

One of the most important bimodule is the algebra $\mathbf{Diff}_{\mathbf{k}}(A)$ of differential operators, which is in fact $\mathrm{U}_{\mathbf{k}/A}(\mathbb{T}_A)$. Its filtered pieces are no longer algebras, but they remain bimodules. For the first step of the filtration, $\mathbf{Diff}^{\,\,\leq 1}_{\mathbf{k}}(A)$ admits the following description: the underlying $\mathbf{k}$-vector space can be identified with $
A \oplus \mathbb{T}_A
$, the left action of $A$ is the natural one, and the right action is given by the formula
\[
(a, Z).a'=(aa'+ Za', (-1)^{|Z| . |a'|} a'Z).
\]
We provide for the reader a sanity check on the sign conventions: 
\begin{align*}
((a, Z).a').a''&=(aa'+ Za', (-1)^{|Z| . |a'|} a'Z)a'' \\
&= (aa'a''+ Za' \times a'' + (-1)^{|Z| . |a'|} a'Za'',  \\
& \qquad \qquad \qquad \qquad \qquad \qquad (-1)^{|Z| . |a'| + (|Z|+ |a|') . |a''|} a'' a' Z) \\
&= (aa'a''+Z(a'a''), (-1)^{|Z| . (|a'|+ |a''|)} a' a'' Z) \\
&= (a, Z) . (a'a'').
\end{align*}
There is an exact sequence of bimodules
\[
0 \rightarrow A \rightarrow \mathbf{Diff}^{\,\, \leq 1}_{\mathbf{k}}(A) \rightarrow \mathbb{T}_A \rightarrow 0.
\]
The dual of $\mathbf{Diff}^{\,\,\leq 1}_{\mathbf{k}}(A)$ is $\Omega^1_A \oplus A$, the right action of $A$ being the natural one, and the left action being given by
\[
a'. (\omega, a)=(a' \omega+da'.a, a'a).
\]
The associated endofunctor of $\mathbf{mod}_A$ attaches to any $A$-module its module of $1$-jets. Again, there is an exact sequence of bimodules
\[
0 \rightarrow \Omega^1_A \rightarrow \mathbb{D} (\mathbf{Diff}^{\,\, \leq 1}_{\mathbf{k}}(A)) \rightarrow A \rightarrow 0.
\]
These two examples can be extended using base change to any anchored $A$-module $(V, \rho)$ in $\mathbf{mod}_{A/\mathbb{T}_A}$: we put 
$\Sigma_V=V \times_{\mathbb{T}_A} \mathbf{Diff}^{\,\, \leq 1}_{\mathbf{k}}(A)$. The explicit description of $\Sigma_V$ runs as follows: as a $\mathbf{k}$-vector space it is isomorphic to $A \oplus V$. The left $A$-module structure is the naive one, and the right $A$-module structure is given by the formula
\[
(a, v).a'=(aa'+\rho(v)a', (-1)^{|a'| . |v|} a'v).
\]
 Besides, there is an exact sequence of bimodules
\begin{equation} \label{stuck1}
0 \rightarrow A \rightarrow \Sigma_V \rightarrow V \rightarrow 0.
\end{equation}
The dual of $\Sigma_V$ is $V^* \oplus A$ as $\mathbf{k}$-vector space. The right $A$-module structure is the naive one, and the left $A$-module structure is given by the formula
\[
a'. (\theta, a)=(a' \theta +\rho^*(da') a, a'a).
\]
where $\rho^* \colon \Omega^1_A \rightarrow V^*$ is the transpose of $\rho$. Besides, there is an exact sequence
\begin{equation}  \label{stuck2}
0 \rightarrow V^* \rightarrow \Sigma_V^* \rightarrow A \rightarrow 0.
\end{equation}
Remark that $\Sigma_V$ and $\Sigma_V^*$ lie in $\mathbf{corr}_{A}^{\,\leq 1}$.
\begin{lemma} \label{zwip}
If $V$ is perfect, then so is $\Sigma_V$ considered as left or right $A$-module.
\end{lemma}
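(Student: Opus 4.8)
The plan is to exploit the short exact sequence of bimodules \eqref{stuck1}, namely $0 \to A \to \Sigma_V \to V \to 0$, and reduce the claim to the fact that the class of perfect $A$-modules is stable under extensions. First I would observe that, because $V$ is assumed to lie in $\mathbf{mod}_{A/\mathbb{T}_A}$ with anchor $\rho$, the short exact sequence \eqref{stuck1} is, by construction, split as a sequence of \emph{left} $A$-modules: indeed the underlying $\mathbf{k}$-vector space of $\Sigma_V$ is $A \oplus V$ and the left action is the ``naive'' one, so the left $A$-module structure on $\Sigma_V$ is literally the direct sum $A \oplus V$. Hence as a left $A$-module $\Sigma_V \cong A \oplus V$, which is perfect since both $A$ and $V$ are, using stability of perfect modules under direct sums (a special case of extensions, recorded in the recollections).

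Next I would handle the right $A$-module structure, which is the genuinely nontrivial direction since the right action $(a,v).a' = (aa' + \rho(v)a',\, (-1)^{|a'|\cdot|v|} a'v)$ is a nontrivial deformation of the direct-sum action. Here the sequence \eqref{stuck1}, read as a sequence of \emph{right} $A$-modules, is in general \emph{not} split (its obstruction class is essentially $\rho$), but it is still a genuine short exact sequence of right dg $A$-modules: the submodule $A \subset \Sigma_V$ carries the standard right action, and the quotient $V$ carries the right action $v a' = (-1)^{|a'|\cdot|v|}a'v$ attached to the $A$-module $V$, which is again perfect. Since the perfect $A$-modules form a subcategory of $\mathrm{C}(A)$ stable under extensions — this is exactly one of the ``very nice properties'' of perfect modules recalled earlier in the paper — the middle term $\Sigma_V$ is perfect as a right $A$-module.

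The one point that deserves care, and which I expect to be the only real obstacle, is to make sure that \eqref{stuck1} really is an exact sequence in the category of right dg $A$-modules with the differential inherited from $\Sigma_V$, i.e. that the maps $A \hookrightarrow \Sigma_V$ and $\Sigma_V \twoheadrightarrow V$ are right $A$-linear and commute with differentials, and that ``extension'' in the sense used in the definition of perfect modules (smallest subcategory containing $A$, stable under shifts, extensions, direct summands) is being invoked correctly — an extension of $V$ by $A[0]$ is perfect because $A$ is, $V$ is perfect, and the two-out-of-three closure under extensions applies. Once this bookkeeping is in place, no computation is needed: the conclusion follows formally from the two exact sequences (left-split on one side, genuinely exact on the other) together with the closure properties of perfect modules stated in \S 2.1. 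The analogous statement for $\Sigma_V^*$, if wanted, would follow the same way from \eqref{stuck2}, or alternatively from the fact that perfect modules are dualizable.
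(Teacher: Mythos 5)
Your argument is correct and is essentially the paper's own proof, which also deduces the lemma directly from the exact sequence \eqref{stuck1} together with the stability of perfect dg-modules under extensions; the extra observations you make (left-splitness, the nontrivial right action, the degreewise bookkeeping) are just an expanded version of the same one-line reduction.
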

\begin{proof}
This follows directly from \eqref{stuck1} and the fact that perfect dg-modules are stable by extension.
\end{proof}

\begin{lemma} \label{swap}
For any anchored $A$-module $V$, the bimodule $\Sigma_V^*$ is symmetric.
\end{lemma}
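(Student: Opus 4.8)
The plan is to produce an explicit isomorphism of bimodules $\phi \colon \Sigma_V^* \to (\Sigma_V^*)^{\mathrm{op}}$, which is exactly what ``$\Sigma_V^*$ is symmetric'' means. Using the description of $\Sigma_V^*$ as $V^* \oplus A$ (as a $\mathbf{k}$-vector space) from \eqref{stuck2} and the transpose $\rho^* \colon \Omega^1_A \to V^*$ of the anchor, I define, on homogeneous elements,
\[
\phi(\theta, a) = \bigl(\rho^*(da) - \theta, \ a\bigr),
\]
extended by $\mathbf{k}$-linearity. It is immediate that $\phi$ is homogeneous of degree $0$ and an involution, $\phi \circ \phi = \mathrm{id}$, hence a bijection. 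It is also a morphism of complexes: the extension $0 \to A \to \Sigma_V \to V \to 0$ splits over $\mathbf{k}$ (as does $\mathbf{Diff}^{\,\,\leq 1}_{\mathbf{k}}(A)$, from which $\Sigma_V$ is obtained by base change), so the differential of $\Sigma_V^*$ is the product differential on $V^* \oplus A$, and compatibility of $\phi$ with it reduces to the fact that the composite $\rho^* \circ d \colon A \to \Omega^1_A \to V^*$ is a morphism of complexes.

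The substance of the proof is the verification that $\phi$ intertwines the two module structures. Unwinding the definition of $(-)^{\mathrm{op}}$, one has to check that
\[
\phi(a' \cdot x) = (-1)^{|a'|\,|x|}\, \phi(x) \cdot a'
\qquad\text{and}\qquad
\phi(x \cdot a') = (-1)^{|a'|\,|x|}\, a' \cdot \phi(x)
\]
for homogeneous $a' \in A$ and $x \in \Sigma_V^*$, where on the right the actions are the original left and right $A$-actions on $\Sigma_V^*$. Both are direct computations from the explicit formulas $a' \cdot (\theta, a) = (a'\theta + \rho^*(da')a, \ a'a)$ and $(\theta, a) \cdot a' = (\theta a', \ aa')$, using graded-commutativity of $A$, right $A$-linearity of $\rho^*$, and the Leibniz rule $d(aa') = da \cdot a' + (-1)^{|a|\,|a'|}\, da' \cdot a$. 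The key point is a cancellation: applying $\phi$ after $a' \cdot (-)$ creates a term $\rho^*\!\bigl(d(a'a)\bigr)$, and Leibniz splits it into a multiple of $\rho^*(da')\,a$ --- which cancels the inhomogeneous term $\rho^*(da')a$ already present in the formula for $a' \cdot (\theta, a)$ --- plus a multiple of $\rho^*(da)\,a'$, which is precisely the term needed to match the right-hand side; the $A$-components agree because $A$ is graded-commutative.

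This gives $\Sigma_V^* \simeq (\Sigma_V^*)^{\mathrm{op}}$ in $\mathbf{corr}_A$, hence the lemma. The only delicate part is keeping track of signs in these two identities; there is no conceptual obstacle. The one non-obvious ingredient is the correction term $\rho^*(da)$: the naive candidate $(\theta, a) \mapsto (\pm\theta, \pm a)$ fails, precisely by terms of the form $\rho^*(da')a$. Conceptually, $\phi$ is the algebraic incarnation of the flip exchanging the two factors of $A \otimes_{\mathbf{k}} A$ modulo the square of the augmentation ideal --- the classical symmetry of the first-jet bimodule $\mathbb{D}(\mathbf{Diff}^{\,\,\leq 1}_{\mathbf{k}}(A))$ --- transported along $\rho^*$, since $\Sigma_V^*$ is the pushout of $V^* \leftarrow \Omega^1_A \to \mathbb{D}(\mathbf{Diff}^{\,\,\leq 1}_{\mathbf{k}}(A))$ in $\mathbf{corr}_A$; one should note only that this symmetry of $\mathbb{D}(\mathbf{Diff}^{\,\,\leq 1}_{\mathbf{k}}(A))$ anticommutes with the inclusion $\Omega^1_A \hookrightarrow \mathbb{D}(\mathbf{Diff}^{\,\,\leq 1}_{\mathbf{k}}(A))$, so one feeds its opposite into the pushout.
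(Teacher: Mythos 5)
Your proof is correct and is essentially the paper's own argument: you define the same involution $(\theta,a)\mapsto(\rho^*(da)-\theta,a)$ and verify by the same Leibniz-rule cancellation that it intertwines the two bimodule structures, hence gives $\Sigma_V^*\simeq(\Sigma_V^*)^{\mathrm{op}}$. Your closing remark about transporting the flip on $A^{(1)}$ through the pushout is also exactly the intrinsic explanation the paper records right after this lemma.
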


\begin{proof}
We define $\chi \colon \Sigma_V^* \rightarrow (\Sigma_V^*)^{\mathrm{op}}$ as follows: 
\[
\chi(\theta, a)=(\rho^*(da)-\theta, a)
\]
We check that $\chi$ is a morphism of bimodules: 
\begin{align*}
\chi(a'(\theta, a)) &= \chi(a' \theta + \rho^*(da')a, a'a) \\
&= (\rho^*(d(a'a))-a' \theta - \rho^*(da'.a), a'a) \\
&= (\rho^*(a' da)-a' \theta , a'a) \\
&= ((-1)^{|\theta|.|a'|} \theta a'-(-1)^{|a|.|a'|} \rho^*(da)a', (-1)^{|a|.|a'|} aa') \\
&= (-1)^{|a|.|a'|} (\rho^*(da), a) a' -  (-1)^{|\theta|.|a'|} (\theta, 0) a'  \\
&= a' \heartsuit (\rho^*(da), a)-a' \heartsuit (\theta, 0)  \\
&= a' \heartsuit (\chi(\theta, a))
\end{align*}
and 
\begin{align*}
(\chi(\theta, a)) \heartsuit a' &= (\rho^*(da)-\theta, a) \heartsuit a' \\
&= (-1)^{|a|.|a'|} a'(\rho^*(da), a)-(-1)^{|\theta|.|a'|} a'(\theta, 0)\\ 
&=(-1)^{|a|.|a'|} (a' \rho^*(da) + \rho^*(da')a, a'a)-(-1)^{|\theta|.|a'|} (a' \theta, 0 )\\
&= (\rho^*(d(aa'))-\theta a', aa')\\
&= \chi(\theta a', aa') \\
&= \chi((\theta, a)a').
\end{align*}
Since $\chi$ is an involution, it is an isomorphism.
\end{proof}
The symmetry of the Atiyah bimodules admits the following more intrinsic explanation: if $I$ is the two sided ideal generated by elements $a \otimes 1-1 \otimes a$ in $A \otimes_{\mathbf{k}} A$, then $(A \otimes_{\mathbf{k}} A) / I^2$ is a bimodule, we call it $A^{(1)}$. Then we have an exact sequence
 \[
 0 \rightarrow I / I^2 \rightarrow A^{(1)} \rightarrow A \rightarrow 0
 \]
and $I/I^2$ is isomorphic to $\Omega^1_A$ via the map attaching $da$ to $a \otimes 1-1 \otimes a$. Then we claim the following: 

\begin{lemma}
$\Sigma_V^*$ is isomorphic to the pushout of the diagram $\xymatrix{ \Omega^1_A \ar[r] \ar[d] _-{\rho^*}& A^{(1)} \\V^*& } $.
\end{lemma}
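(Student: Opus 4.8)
The plan is to produce an explicit isomorphism between $\Sigma_V^*$ and the pushout $P = A^{(1)} \oplus_{\Omega^1_A} V^*$, using the concrete descriptions already established for both objects. First I would recall that the pushout $P$ fits in the commutative square with $\Omega^1_A \to A^{(1)}$ the inclusion $I/I^2 \hookrightarrow (A\otimes_{\mathbf k} A)/I^2$ (via $da \mapsto a\otimes 1 - 1\otimes a$) and $\rho^* \colon \Omega^1_A \to V^*$ the transpose of the anchor; concretely $P = (A^{(1)} \oplus V^*)/\{(j(\omega), 0) - (0, \rho^*(\omega)) : \omega \in \Omega^1_A\}$ where $j$ denotes the inclusion. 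Using the exact sequence $0 \to I/I^2 \to A^{(1)} \to A \to 0$, which admits a $\mathbf{k}$-linear (not bimodule-linear) splitting $a \mapsto \overline{a \otimes 1}$, one identifies $A^{(1)} \simeq A \oplus \Omega^1_A$ as a $\mathbf{k}$-vector space, with bimodule structure $a' \cdot (a, \omega) = (a'a, a'\omega + da'.a)$ and $(a,\omega)\cdot a' = (aa', \omega a')$ — this is exactly the description of $\mathbb D(\mathbf{Diff}^{\leq 1}_{\mathbf k}(A))$ given in the text for $V = \mathbb T_A$.

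Next I would write down the candidate map. Since $P$ is a quotient of $A^{(1)} \oplus V^*$, a bimodule map out of $P$ is the same as a pair of bimodule maps $A^{(1)} \to \Sigma_V^*$ and $V^* \to \Sigma_V^*$ agreeing after restriction along $\Omega^1_A$. For the second, I take the natural inclusion $V^* \hookrightarrow \Sigma_V^*$, $\theta \mapsto (\theta, 0)$, from the exact sequence \eqref{stuck2}; one checks this is a map of bimodules for the $\mathbf{k}$-module-type bimodule structure on $V^*$ (right action naive, left action $a'\cdot(\theta,0) = (a'\theta, 0)$, consistent with the formula $a'.(\theta,a)=(a'\theta + \rho^*(da')a, a'a)$ when $a=0$). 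For the first, using the identification $A^{(1)} \simeq A \oplus \Omega^1_A$, I define $A^{(1)} \to \Sigma_V^*$ by $(a, \omega) \mapsto (\rho^*(\omega), a)$; the computation that this respects left and right $A$-actions is a routine check against the two displayed bimodule formulas, and — crucially — its restriction to $\Omega^1_A = \{(0,\omega)\}$ sends $\omega \mapsto (\rho^*(\omega), 0)$, which matches the composite $\Omega^1_A \xrightarrow{\rho^*} V^* \hookrightarrow \Sigma_V^*$. Hence the two maps glue to a well-defined bimodule morphism $\Phi \colon P \to \Sigma_V^*$, explicitly $\Phi(\overline{(a,\omega)}, \theta) = (\rho^*(\omega) + \theta, a)$.

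Then I would check $\Phi$ is an isomorphism. The cleanest route is via the five lemma applied to the map of short exact sequences: the pushout square gives an exact sequence $0 \to V^* \to P \to A \to 0$ (since $\Omega^1_A \to A^{(1)}$ is injective with cokernel $A$, pushing out preserves this), and this maps to $0 \to V^* \to \Sigma_V^* \to A \to 0$ of \eqref{stuck2}; $\Phi$ is the identity on $V^*$ and on $A$, so it is an isomorphism. Alternatively one simply writes the inverse $(\theta, a) \mapsto (\overline{(a, 0)}, \theta)$ directly and checks both composites are the identity modulo the pushout relation.

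\textbf{Main obstacle.} No single step is deep; the real care goes into the bookkeeping. The subtle point is the bimodule structure on $A^{(1)}$ after the $\mathbf{k}$-linear identification with $A \oplus \Omega^1_A$: the left action is \emph{not} naive (it picks up the term $da'.a$), precisely mirroring the left action on $\mathbb D(\mathbf{Diff}^{\leq 1}_{\mathbf k}(A))$, and one must verify that $\Phi$ intertwines this with the left action $a'.(\theta,a) = (a'\theta + \rho^*(da')a, a'a)$ on $\Sigma_V^*$ — here the identity $\rho^*(da'.a) = \rho^*(da').a$ together with $A$-linearity of $\rho^*$ does the work. A secondary nuisance is making sure the pushout is taken in the correct category ($\mathbf{corr}_A$, i.e. $(A,A)$-bimodules, equivalently $A \otimes_{\mathbf k} A^{\mathrm{op}}$-modules) and that $\rho^*$ is genuinely a bimodule map for the relevant structures; once that is pinned down, everything is forced and the symmetry of $\Sigma_V^*$ established in Lemma \ref{swap} reappears as the evident symmetry of the pushout description.
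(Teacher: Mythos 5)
Your argument is correct in substance, but it takes a genuinely different route from the paper. The paper never writes down a map out of the pushout: it starts from the defining cartesian square $\Sigma_V = V \times_{\mathbb{T}_A} \mathbf{Diff}^{\,\leq 1}_{\mathbf{k}}(A)$, observes that surjectivity of $\mathbf{Diff}^{\,\leq 1}_{\mathbf{k}}(A) \rightarrow \mathbb{T}_A$ makes this square cocartesian in the abelian category $\mathbf{corr}_A$, dualizes to get a cartesian (hence, by injectivity of $\Omega^1_A \rightarrow \mathbb{D}(\mathbf{Diff}^{\,\leq 1}_{\mathbf{k}}(A))$, cocartesian) square exhibiting $\Sigma_V^*$ as the pushout, and only then identifies $\mathbb{D}(\mathbf{Diff}^{\,\leq 1}_{\mathbf{k}}(A))$ with $A^{(1)}$ via $a_1 \otimes a_2 \mapsto (da_1.a_2, a_1a_2)$. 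You instead build an explicit bimodule morphism $\Phi$ from the pushout to $\Sigma_V^*$ by gluing $V^* \hookrightarrow \Sigma_V^*$ with a map $A^{(1)} \rightarrow \Sigma_V^*$, and conclude by the short five lemma applied to the two extensions of $A$ by $V^*$. Your route is more hands-on and makes the isomorphism completely explicit, at the cost of several bimodule-linearity verifications; the paper's route outsources those verifications to the single identification $\theta$ and to general exactness facts about cartesian/cocartesian squares in abelian categories, which is what makes its proof short. Both proofs need the same key ingredient, namely the jet-type description of $A^{(1)}$, so the difference is one of packaging rather than of underlying content.

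One bookkeeping correction: the $\mathbf{k}$-linear splitting you name, $a \mapsto \overline{a \otimes 1}$, does not induce the bimodule structure you quote. With $da$ identified with $\overline{a \otimes 1 - 1 \otimes a}$, the splitting $a \mapsto \overline{a \otimes 1}$ makes the \emph{left} action naive and twists the \emph{right} action (one finds $\overline{a \otimes 1}\cdot a' = \overline{aa' \otimes 1} - a\, da'$), whereas the structure you want — right action naive, left action picking up $da'.a$, i.e.\ the one matching $\mathbb{D}(\mathbf{Diff}^{\,\leq 1}_{\mathbf{k}}(A))$ — is induced by the other splitting $a \mapsto \overline{1 \otimes a}$ (consistent with the paper's $\theta$, which sends $1 \otimes a$ to $(0,a)$ and $a \otimes 1$ to $(da, a)$). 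With that splitting your formula $(a,\omega) \mapsto (\rho^*(\omega), a)$ is indeed a bimodule map compatible with $\rho^*$ on $\Omega^1_A$, and the rest of your argument, including the five-lemma step, goes through unchanged; so this is a fixable slip in the identification, not a gap in the method.
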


\begin{proof}
Recall that $\mathbf{corr}_A$ is abelian. We have a cartesian diagram 
\[
\xymatrix{\Sigma_V \ar[r] \ar[d] & \mathbf{Diff}_{\mathbf{k}}^{\, \leq 1}(A)\ar[d] \\ V \ar[r]^-{\rho} &\mathbb{T}_A}
\] 
Since the map $\mathbf{Diff}_{\mathbf{k}}^{\, \leq 1}(A) \rightarrow \mathbb{T}_A$ is surjective, the diagram is cocartesian as well. Hence the dual diagram
\[
\xymatrix{\Omega^1_A \ar[r] \ar[d]_-{\rho^*} & \mathbb{D}(\mathbf{Diff}_{\mathbf{k}}^{\, \leq 1}(A))\ar[d] \\ V^* \ar[r] &\Sigma_V^*}
\]
is cartesian. Since the morphism $\Omega^1_A \rightarrow  \mathbb{D}(\mathbf{Diff}_{\mathbf{k}}^{\, \leq 1}(A))$ is injective, this diagram is also cocartesian. 
\par \medskip
The only remaining thing to prove is the identification between the two bimodules $\mathbb{D}(\mathbf{Diff}_{\mathbf{k}}^{\, \leq 1}(A))$ and $A^{(1)}$. We already have an explicit model for $\mathbb{D}(\mathbf{Diff}_{\mathbf{k}}^{\, \leq 1}(A))$. We provide an isomorphism 
\[
\theta \colon A^{(1)} \rightarrow \mathbb{D}(\mathbf{Diff}_{\mathbf{k}}^{\, \leq 1}(A)) 
\]
by putting $\theta(a_1 \otimes a_2)= (da_1 . a_2, a_1a_2)$. This maps obviously respects the right action on $A$, but also the left since
\begin{align*}
\theta(aa_1 \otimes a_2)&=(da. a_1a_2 + a da_1.a_2, aa_1a_2) \\
&= a (da_1 . a_2, a_1a_2).
\end{align*}
It is easy to prove that it is an isomorphism.
\end{proof}
As a corollary, we see directly that $\Sigma_V^*$ is symmetric, since it is a pushout of a diagram of symmetric correspondences.

\subsection{Duality for Atiyah bimodules}
In this section, we prove a technical compatibility result concerning the behaviour of Atiyah bimodules under duality. 

\begin{proposition} \label{salaud}
Let $(V, \rho)$ be an anchored $A$-module, and assume that $V$ is perfect. Then:
\begin{enumerate}
\item The sequence $0 \rightarrow (V \otimes_A M)^* \rightarrow (\Sigma_V \otimes_A M)^* \rightarrow M^* \rightarrow 0$ is exact. \vspace{0.2cm}
\item The dual of $\Sigma_V \otimes_A M$ is canonically isomorphic to $M^* \otimes_A \Sigma_V^*$, and this isomorphism yields an isomorphism of exact sequences
\[
\xymatrix{
0 \ar[r]& M^* \otimes_A V^* \ar[r] \ar[d]^-{\wr} &M^* \otimes_A \Sigma_V^* \ar[r] \ar[d]^-{\wr} & M^* \ar[r] \ar@{=}[d] & 0  \\
0 \ar[r] &  (V \otimes_A M)^* \ar[r] &  (\Sigma_V \otimes_A M)^* \ar[r]& M^* \ar[r]&
0}
\]
\end{enumerate}
\end{proposition}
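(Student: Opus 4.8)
The plan is to deduce everything from the defining short exact sequence \eqref{stuck1}, together with the hypothesis that $V$ is perfect, which by Lemma \ref{zwip} makes $\Sigma_V$ perfect as a left $A$-module; perfect modules are h-flat and dualizable, and this is exactly what makes the dualization arguments behave. First I would tensor \eqref{stuck1} on the right with $M$ over $A$. Because $\Sigma_V$ and $V$ are perfect, hence h-flat, tensoring with $M$ preserves short exactness, so we obtain the short exact sequence of left $A$-modules $0 \rightarrow M \rightarrow \Sigma_V \otimes_A M \rightarrow V \otimes_A M \rightarrow 0$ (using $A \otimes_A M \simeq M$). Now I would apply the functor $(-)^* = \mathrm{Hom}_A(-, A)$. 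The point is that the term $V \otimes_A M$ in the kernel position is perfect as a left $A$-module whenever $M$ is — but we do not even need that: since $\Sigma_V \otimes_A M$ and $V \otimes_A M$ sit in a short exact sequence with $M$, and since dualizing is exact on short exact sequences whose sub-object has no higher $\mathrm{Ext}^1$ obstruction, the cleanest route is to observe that \eqref{stuck1} is a sequence of bimodules in which $A$ (the kernel) is a retract issue only up to the anchor: in fact \eqref{stuck1} is split as a sequence of \emph{left} $A$-modules (the splitting $v \mapsto (0,v)$, $a \mapsto (a,0)$ is left $A$-linear), so after $\otimes_A M$ it remains split as left $A$-modules, and dualizing a split exact sequence gives a (split) short exact sequence. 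This yields part (1): $0 \rightarrow (V\otimes_A M)^* \rightarrow (\Sigma_V \otimes_A M)^* \rightarrow M^* \rightarrow 0$.

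For part (2), the canonical isomorphism $(\Sigma_V \otimes_A M)^* \simeq M^* \otimes_A \Sigma_V^*$ is the standard adjunction/duality for a tensor product $K \otimes_A M$ with $K$ a bimodule: one has $\mathrm{Hom}_A(K \otimes_A M, A) \simeq \mathrm{Hom}_A(M, \mathrm{Hom}_A(K, A))$, and since $K = \Sigma_V$ is perfect (hence reflexive and dualizable) as a left $A$-module, $\mathrm{Hom}_A(M, \mathbb{D}(K)) \simeq M^* \otimes_A \mathbb{D}(K)$ — here the bimodule structure on $\mathbb{D}(K)=\Sigma_V^*$ is precisely the one introduced in \S\ref{bimod}, so that $M^* \otimes_A \Sigma_V^*$ makes sense and receives the right $A$-action correctly. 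I would spell out this isomorphism on elements (sending $\phi \otimes \theta$ to the functional $v \otimes m \mapsto \phi(\theta \cdot v\, \text{-type pairing})$, with the appropriate Koszul signs) so that its naturality in the two exact sequences \eqref{stuck1} and \eqref{stuck2} is manifest.

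Finally, to get the commutative ladder of exact sequences, I would apply the same two operations — tensoring with $M^*$ on the left, and dualizing $\Sigma_V \otimes_A M$ — to the \emph{two} exact sequences \eqref{stuck1} and \eqref{stuck2}, which present $\Sigma_V$ and $\Sigma_V^*$ respectively, and check that the canonical duality isomorphism of part (2) is compatible with the maps in those sequences. Concretely: $M^* \otimes_A (-)$ applied to \eqref{stuck2} gives the top row $0 \rightarrow M^*\otimes_A V^* \rightarrow M^* \otimes_A \Sigma_V^* \rightarrow M^* \rightarrow 0$ (exactness because \eqref{stuck2} is split as \emph{right} $A$-modules, or because $V^*$ is perfect), while dualizing the $\otimes_A M$ of \eqref{stuck1} gives the bottom row from part (1); the middle vertical is the isomorphism of part (2), the left vertical is its evident restriction $M^*\otimes_A V^* \simeq (V\otimes_A M)^*$ (the same duality formula applied to the bimodule $V$ itself, viewed via its trivial bimodule structure), and the right vertical is the identity of $M^*$. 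The main obstacle — and the only place real care is needed — is bookkeeping of the two competing bimodule structures on tensor products flagged by the \raisebox{0.08cm}{\danger} warnings in \S\ref{bimod}: one must verify that the duality isomorphism $(\Sigma_V \otimes_A M)^* \simeq M^* \otimes_A \Sigma_V^*$ uses $\Sigma_V^*$ with its \emph{genuine} left action (the one twisted by $\rho^*$, as in \eqref{stuck2}) and not the naive one coming from it being a left $A$-module, and that the Koszul signs in the left action formula $a'.(\theta,a) = (a'\theta + \rho^*(da')a, a'a)$ propagate correctly through the adjunction; once the element-level formula for the isomorphism is written down, squareness of the ladder is a direct check.
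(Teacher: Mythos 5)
There is a genuine gap, and it sits at the heart of your argument for part (1). You claim that because \eqref{stuck1} is split as a sequence of \emph{left} $A$-modules, the sequence $0 \to M \to \Sigma_V \otimes_A M \to V \otimes_A M \to 0$ "remains split as left $A$-modules" after tensoring, so that dualizing is harmless. This is false in general: the tensor product $\Sigma_V \otimes_A M$ is formed using the \emph{right} $A$-module structure of $\Sigma_V$, and the left-linear splitting maps $(a,v)\mapsto a$ and $v\mapsto (0,v)$ are not right $A$-linear, precisely because the right action carries the anchor term, $(a,v).a' = (aa'+\rho(v)(a'),\,(-1)^{|a'||v|}a'v)$; hence they do not descend to $\Sigma_V\otimes_A M$. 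The tensored sequence is exactly the extension \eqref{HKR}, whose failure to split is measured by the HKR class $\Theta_M$ (Proposition \ref{stargarder}): a strict left-linear splitting of it is the same thing as a strict $(V,\rho)$-connection on $M$, which does not exist in general (think of the classical Atiyah-class obstruction). If your splitting claim were true, every module would admit a derived connection and the formality theorem would be vacuous. Consequently the surjectivity of $(\Sigma_V\otimes_A M)^* \to M^*$ — the only nontrivial point in (1), since left exactness of $\mathrm{Hom}$ gives the rest — is not established by your argument. A similar left/right slip occurs for the top row: exactness of $M^*\otimes_A$\eqref{stuck2} does hold, but not because \eqref{stuck2} is split as \emph{right} modules (the tensor uses the twisted \emph{left} structure of $\Sigma_V^*$); it holds because the quotient of \eqref{stuck2} is $A$, which is flat.

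The paper closes exactly this gap by writing down an explicit map $\Theta\colon M^*\otimes_A\Sigma_V^* \to (\Sigma_V\otimes_A M)^*$,
\[
\Theta(\delta\otimes\phi)(s\otimes m) \;=\; (-1)^{|\phi|(|\delta|+|m|)}\,\phi(s)\,\delta(m) \;+\; \phi\bigl\{\rho(\pi_V(s))(\delta(m))\bigr\},
\]
checking that the anchor-correction term makes it balanced and bilinear over $A$ on both sides, and then verifying the ladder commutes; parts (1) and (2) are deduced \emph{simultaneously} from the diagram chase (surjectivity onto $M^*$ comes for free from the top row, not from a splitting). Your hom-tensor adjunction outline for (2), using perfectness of $\Sigma_V$ and the twisted left structure on $\mathbb{D}(\Sigma_V)=\Sigma_V^*$, could be carried out and would be an acceptable alternative — but the decisive element-level formula you defer is precisely where the correction term above appears; your sketched "evaluation" pairing without it is not well defined on $\Sigma_V\otimes_A M$. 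As written, (1) cannot be proved first and independently by splitting; you should either prove (2) honestly via the adjunction with the twisted structures and then extract (1) from the ladder, or construct $\Theta$ directly as the paper does.
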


\begin{proof}
We consider the map $\Theta \colon  M^* \otimes_A \Sigma_V^* \rightarrow (\Sigma_V \otimes_A M)^*$ defined as follows: 
\[
\Theta (\delta \otimes \phi)   (s \otimes m) = (-1)^{|\phi| . (|\delta|+|m|)} \phi(s)\delta(m) +\phi \{\rho(\pi_V(s))(\delta(m))\}.
\]
In order to prove that this formula does make sense, we must check the following list of properties:
\par \medskip
\fbox{$\Theta(\phi \otimes \delta)$ is well defined on $\Sigma_V \otimes_A M$} 
\par \medskip
First we notice that on $A$, viewed as a submodule of $\Sigma_V$, we have 
\[
\phi(aa')=(-1)^{|\phi| . |a'|}\phi(a)a'. 
\]
Then we compute:
\begin{align*}
\Theta (\delta & \otimes \phi)  (sa \otimes m)\\
&= (-1)^{|\phi|. (|\delta|+|m|)} \phi(sa) \delta(m) + \phi \{\rho(\pi_V(sa))(\delta(m))\} \\
&= (-1)^{|\phi| . (|\delta|+|m|)} \phi \{(-1)^{|s| . |a|}as +  \rho(\pi_V(s))(a)\} \delta(m) \\
& \qquad   + (-1)^{|s| . |a|}\phi \{a\rho(\pi_V(s))(\delta(m))\} \\
&= (-1)^{|\phi| . (|\delta|+|m|) + |s| . |a|} a \phi(s) \delta(m)  + \phi \{\rho(\pi_V(s))(a) \delta(m)\} \\
& \qquad  + (-1)^{|s| . |a|}\phi \{a\rho(\pi_V(s))(\delta(m))\} \\
&=(-1)^{|\phi|. (|\delta|+|m|+|a|)} \phi(s) \delta(am) + \phi \{\rho(\pi_V(s))(\delta(am))\} \\
&=\Theta (\delta \otimes \phi) (a \otimes sm).
\end{align*}

\par \medskip
\fbox{\textit{$\Theta(\delta \otimes \phi)$ is $A$-linear}} 
\par \medskip
This is obvious, since both $\pi_V$ and $\rho$ are.
\par \medskip
\fbox{\textit{$\Theta$ is well defined}}
\par \medskip
This is again a nice calculation: 
\begin{align*}
\Theta (\delta a & \otimes \phi)  (s \otimes m)\\
&=(-1)^{|\phi|. (|\delta|+|a|+|m|)} \phi(s) \delta(m) a + \phi \{\rho(\pi_V(s))(\delta(m)a)\} \\
&=(-1)^{(|\phi|+|a|)(|\delta|+|m|)+|s|.|a|} a \phi(s) \delta(m) \\
& \qquad+ (-1)^{(|\delta|+|m|).|a|} \phi \{\rho(\pi_V(s))(a\delta(m))\}\\
&= (-1)^{(|\phi|+|a|)(|\delta|+|m|)}\phi(sa) \delta(m) \\
& \qquad - (-1)^{(|\phi|+|a|)(|\delta|+|m|)} \phi \{\rho(\pi_V(s))(a)\} \delta(m) \\
& \qquad + (-1)^{(|\delta|+|m|).|a|} \phi \{\rho(\pi_V(s))(a\delta(m))\}\\
&=(-1)^{(|\phi|+|a|)(|\delta|+|m|)}\phi(sa) \delta(m) \\
& \qquad  +(-1)^{(|\delta|+|m|).|a|} \phi \{\rho(\pi_V(s))(a\delta(m))-\rho(\pi_V(s))(a) \delta(m) \} \\
&= (-1)^{(|\phi|+|a|)(|\delta|+|m|)}\phi(sa) \delta(m) \\
&\qquad  +(-1)^{(|\delta|+|m|).|a|} \phi \{(-1)^{|s|.|a|} a \rho(\pi_V(s))(\delta(m))\}\\
&= (-1)^{(|\phi|+|a|)(|\delta|+|m|)}\phi(sa) \delta(m) + \phi \{\pi_V(sa)\} \delta(m) \\
&= \Theta (\delta   \otimes a\phi)  (s \otimes m).
\end{align*} 
\par \medskip
\fbox{\textit{$\Theta$ is right $A$-linear}}
\par \medskip
\begin{align*}
(\Theta(\delta \otimes & \phi) a)(s \otimes m) \\
&= \Theta(\delta \otimes \phi)(s \otimes m) a \\
&= (-1)^{|\phi| . (|\delta|+|m|)} \phi(s)\delta(m) a +\phi \{\rho(\pi_V(s))(\delta(m)) \} a\\
&= (-1)^{(|\phi|+|a|).(|\delta|+|m|)} \phi(s) a \delta(m) + \phi \{\rho(\pi_V(s))(\delta(m)) \} a \\
&= \Theta(\delta \otimes \phi a)(s \otimes m).
\end{align*}
\par \medskip
Next, we claim that the diagram below
\[
\xymatrix{
0 \ar[r]& M^* \otimes_A V^* \ar[r] \ar[d]^-{\wr} &M^* \otimes_A \Sigma_V^* \ar[r] \ar[d]^-{\Theta} & M^* \ar[r] \ar@{=}[d] & 0  \\
0 \ar[r] &  (V \otimes_A M)^* \ar[r] &  (\Sigma_V \otimes_A M)^* \ar[r]& M^*
}
\]
commutes. For the left square, if $\phi$ belongs to $V^*$, $\phi$ vanishes on $A$, so
\[
\Theta (\delta \otimes \phi)   (s \otimes m) = (-1)^{|\phi| . (|\delta|+|m|)} \phi(s)\delta(m) .
\]
For the right square, we have $\pi_V(1)=0$ so 
\[
\Theta (\delta \otimes \phi)   (1 \otimes m) = (-1)^{|\phi| . (|\delta|+|m|)} \phi(1)\delta(m)=(\phi(1)\delta))(m).
\]
We can conclude: the bottom sequence is also exact and the middle vertical arrow is an isomorphism.
\end{proof}

\subsection{Derived connections and the HKR class}
Let $V$ be an $A$-module endowed with an anchor $\rho \colon V \rightarrow \mathbb{T}_A$. 
\begin{center}
\textit{From now on, we will always assume that $V$ is perfect.} 
\end{center}
Then, thanks to \eqref{stuck1}, $\Sigma_V$ and $\Sigma_V^*$ are also perfect, when considered as left or right $A$-modules. In particular, all these modules are h-flat.

\begin{definition}
Let $M$ be an $A$-module. A derived $(V, \rho)$ connection on $M$ is a section in $\mathrm{D}(A)$ of the morphism $
\Sigma_V^*  \otimes_A M   \rightarrow M.$
\end{definition} 

\begin{lemma}
A derived $(V, \rho)$ connection on an $A$-module $M$ is given by an equivalence class of triplet $(\widetilde{M}, q, \nabla)$ where
\begin{enumerate}
\item[--] $q \colon \widetilde{M} \rightarrow M$ is a $A$-linear quasi-isomorphism.
\item[--] $\nabla \colon \widetilde{M} \rightarrow V^* \otimes_A M$ is a $\mathbf{k}$-linear morphism such that for all $a$ in $A$ and $m$ in $\widetilde{M}$ respectively, 
\[
\nabla(a\tilde{m})= a \nabla(q(\tilde{m})) +\rho^*(da) \otimes q(\tilde{m})
\]
where $\rho^*$ is the transpose of $\rho$. 
\end{enumerate}
\par \medskip
Besides, two derived connections $\nabla_1$ and $\nabla_2$ are identified if there is a diagram
\[
\xymatrix{ & \widetilde{M}_3 \ar[ld]_-{\sim} \ar[rd]^-{\sim} \ar[dd]^-{\nabla_3}& \\
\widetilde{M}_1 \ar[rd]_-{\nabla_1} && \widetilde{M}_2 \ar[ld]^-{\nabla_2} \\
&V^* \otimes M&
}
\]

\end{lemma}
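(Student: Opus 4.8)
The plan is to convert the datum of a section in $\mathrm{D}(A)$ into strict chain-level data by resolving $M$, the bridge being the canonical \emph{non-$A$-linear} splitting of the projection $p\colon \Sigma_V^*\otimes_A M\to M$. First I would record the relevant structure. Since $V$ is perfect, $\Sigma_V^*$ is perfect, hence h-flat, as a right $A$-module, so $\Sigma_V^*\otimes_A(-)$ preserves quasi-isomorphisms; tensoring the exact sequence \eqref{stuck2} (which is split in each degree as a sequence of right $A$-modules, in fact split as complexes) by $M$ yields a degreewise-split short exact sequence of $A$-modules $0\to V^*\otimes_A M\xrightarrow{\iota}\Sigma_V^*\otimes_A M\xrightarrow{p} M\to 0$. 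I fix once and for all the induced graded splitting $j\colon M\to\Sigma_V^*\otimes_A M$ of $p$; it is a $\mathbf{k}$-linear chain map but not $A$-linear, and the explicit description of the left action on $\Sigma_V^*$ gives at once the key identity $a\cdot j(m)-j(am)=\iota(\rho^*(da)\otimes m)$. Finally $p$, being a degreewise epimorphism, is a fibration for the projective model structure on $\mathrm{C}(A)$.

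Next I would treat the two directions of the correspondence. Given a triple $(\widetilde M,q,\nabla)$, set $F:=j\circ q+\iota\circ\nabla\colon\widetilde M\to\Sigma_V^*\otimes_A M$. One checks directly that $p\circ F=q$, that $F$ is a chain map iff $\nabla$ is, and --- using the key identity --- that $F$ is $A$-linear iff $\nabla$ satisfies the Leibniz rule of the statement; hence $F$ is a morphism in $\mathrm{C}(A)$ and $s:=[F]\circ[q]^{-1}$ is a morphism in $\mathrm{D}(A)$ with $p\circ s=\mathrm{id}_M$, i.e. a derived connection, which moreover depends only on the equivalence class of the triple (if $(\widetilde M_3,q_3,\nabla_3)$ dominates $(\widetilde M_1,q_1,\nabla_1)$ and $(\widetilde M_2,q_2,\nabla_2)$ as in the displayed diagram, then $F_1 r_1=F_3=F_2 r_2$). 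Conversely, starting from a derived connection $s$, I would choose a cofibrant (h-projective) resolution $q\colon P\xrightarrow{\sim} M$, so that $s$ is represented by a genuine chain map $F\colon P\to\Sigma_V^*\otimes_A M$ with $p\circ F$ chain-homotopic to $q$; then I would \emph{rectify}: writing the homotopy as a map $\mathrm{Cyl}(P)\to M$ out of a cylinder object and lifting it along the fibration $p$ through the acyclic cofibration $P\hookrightarrow\mathrm{Cyl}(P)$, then restricting to the other end, produces $F'\simeq F$ with $p\circ F'=q$ strictly. Decomposing $F'$ uniquely as $j\circ q+\iota\circ\nabla$ and reading the previous computation backwards exhibits $(P,q,\nabla)$ as a triple inducing $s$.

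The remaining and most delicate point is that the equivalence relation generated by the displayed diagrams is \emph{exactly} equality of the induced derived connections. One inclusion is the compatibility just noted. For the converse, suppose two triples induce the same $s$. At the cost of one diagram each I would replace the $\widetilde M_i$ by cofibrant resolutions; factoring one structure map as an acyclic cofibration followed by a fibration and extending its connection across the acyclic cofibration by lifting against $p$ (one more diagram) lets me assume that map is an acyclic fibration, so that the strict fibre product over $M$ has the right homotopy type; a cofibrant resolution $P$ of it maps by quasi-isomorphisms to both $\widetilde M_i$ with equal composites to $M$. This reduces everything to two triples $(P,q,G_1)$ and $(P,q,G_2)$ with the \emph{same} cofibrant $P$ and $q$ and $[G_1]=[G_2]$ in $\mathrm{D}(A)$. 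Since $P$ is cofibrant the $G_i$ are chain-homotopic by an $A$-linear homotopy $H$, which after correcting by the lift along $p$ of a boundary may be taken to land in $\ker p$, i.e. $H=\iota k$. But an $A$-linear chain homotopy from $G_1$ to $G_2$ is precisely an $A$-linear chain map $\widehat G\colon\mathrm{Cyl}(P)\to\Sigma_V^*\otimes_A M$ with $\widehat G\circ j_0=G_1$, $\widehat G\circ j_1=G_2$ and $p\circ\widehat G=q\circ\sigma$, where $j_0,j_1\colon P\to\mathrm{Cyl}(P)$ and $\sigma\colon\mathrm{Cyl}(P)\xrightarrow{\sim} P$; applying the displayed diagram twice --- with middle term the triple $(\mathrm{Cyl}(P),q\sigma,\widehat G)$ and with $\widetilde M_3=P$ mapping to $\mathrm{Cyl}(P)$ by $j_0$, resp. $j_1$, and to $P$ by the identity --- yields $(P,q,G_1)\sim(P,q,G_2)$.

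I expect the main obstacle to be this last paragraph: the algebraic content (Leibniz $\Leftrightarrow$ failure of $A$-linearity of $j$) is immediate, but the homotopical bookkeeping --- the two rectification steps and the realization of an $A$-linear homotopy of connections by a single cylinder --- all hinge on the one fact that $p$ is a fibration one may lift against (acyclic) cofibrations out of cofibrant objects, together with h-flatness of $\Sigma_V^*$, and must be carried out with enough care that the successive strictifications stay compatible with everything in sight.
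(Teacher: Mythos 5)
Your algebraic core is exactly the paper's argument: the paper represents the section by a span with a cofibrant replacement $\widetilde M$ of $M$ and an $A$-linear chain map $\Phi\colon \widetilde M\to \Sigma_V^*\otimes_A M$, uses the $\mathbf{k}$-linear splitting $\Sigma_V^*\otimes_A M\simeq M\oplus (V^*\otimes_A M)$ to write $\Phi=(q,\nabla)$, and observes that $A$-linearity of $\Phi$ is equivalent to the Leibniz rule --- your key identity $a\cdot j(m)-j(am)=\iota(\rho^*(da)\otimes m)$ is that same computation. The paper stops essentially there; in particular your first rectification step is not needed (one may simply take $q:=p\circ F$, which is automatically a quasi-isomorphism since it is homotopic to the chosen resolution map, and this is what the paper implicitly does), and the paper does not verify that the equivalence relation generated by the displayed spans coincides with equality of sections in $\mathrm{D}(A)$; your attempt to check this is a genuine addition and most of it is sound.

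One step in that last part, however, is unjustified and in general false: the claim that the $A$-linear homotopy $H$ between the two representatives can be corrected so as to land in $\ker p$. Writing $K=V^*\otimes_A M$, such a correction exists precisely when $\nabla_2-\nabla_1$ is a boundary in $\mathrm{Hom}_A(P,K)$, whereas your hypothesis only gives that $\iota\circ(\nabla_2-\nabla_1)$ is a boundary in $\mathrm{Hom}_A(P,\Sigma_V^*\otimes_A M)$; the map induced by $\iota$ on cohomology need not be injective, its defect being controlled by composition with the extension class of $0\to V^*\otimes_A M\to \Sigma_V^*\otimes_A M\to M\to 0$, which is essentially the (dual) HKR class and is nonzero in the situations of interest. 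Fortunately the correction is unnecessary: keep $\widehat G\colon \mathrm{Cyl}(P)\to \Sigma_V^*\otimes_A M$ as produced by the homotopy, and take the middle triple of your two comparison diagrams to be $\bigl(\mathrm{Cyl}(P),\,p\circ\widehat G,\,\widehat\nabla\bigr)$, where $\widehat\nabla$ is the second component of $\widehat G$. The map $p\circ\widehat G$ restricts to $q$ on both ends of the cylinder, hence is a quasi-isomorphism, and the Leibniz rule for $\widehat\nabla$ with respect to it is exactly your equivalence between $A$-linearity and Leibniz; with this replacement your argument for the delicate direction goes through.
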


\begin{proof}
A derived connection can be represented by a span
\[
\xymatrix{
\widetilde{M} \ar[d]_-{\Phi}^-{\wr} \ar[rd]^-{q} & \\
M  &  \ar[l] \Sigma_V^*  \otimes_A M
}
\]
where $\widetilde{M}$ is a cofibrant\footnote{Here we use the projective model structure on $A$-modules.} replacement of $M$ (since all $A$-modules are fibrant). As a $\mathbf{k}$-vector space, $\Sigma_V^*  \otimes_A M$ is isomorphic to $M \oplus (V^* \otimes_A M)$ so $\Phi$ can be written as $(q, \nabla)$. The condition that $u$ is $A$-linear directly  gives Leibniz rule for $\nabla$.
\end{proof}

Lastly we give the construction of the dual connection: 

\begin{lemma} \label{dudu}
If $\nabla$ is a derived connection on a module $M$, it induces a natural connection $\nabla^*$ on $M^*$.
\end{lemma}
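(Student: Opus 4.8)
The plan is to dualize the structure of a derived connection using the duality formalism established in Proposition~\ref{salaud}. Recall that a derived $(V,\rho)$ connection on $M$ is a section in $\mathrm{D}(A)$ of the canonical morphism $\Sigma_V^* \otimes_A M \to M$ coming from the exact sequence \eqref{stuck2} tensored with $M$. Since $V$ is perfect, so are $\Sigma_V$ and $\Sigma_V^*$, hence they are h-flat, so all tensor products appearing are already derived tensor products and dualities are well-behaved; this is exactly what makes the argument go through on the nose rather than only up to higher homotopy.

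First I would apply the functor $(-)^*=\mathrm{Hom}_A(-,A)$ to the structural morphism $\Sigma_V^* \otimes_A M \to M$. Dualizing reverses the arrow, giving $M^* \to (\Sigma_V^* \otimes_A M)^*$. Now I invoke Proposition~\ref{salaud} applied to the anchored module $V$ — but here is the subtlety: Proposition~\ref{salaud} as stated computes the dual of $\Sigma_V \otimes_A M$, identifying it with $M^* \otimes_A \Sigma_V^*$, together with the compatibility of the two short exact sequences. What I actually need is the dual of $\Sigma_V^* \otimes_A M$. Since $\Sigma_V^*$ is symmetric (Lemma~\ref{swap}) and $\Sigma_V$ is reflexive as a bimodule (being perfect on both sides by Lemma~\ref{zwip}), one has $(\Sigma_V^*)^* \simeq \Sigma_V$ compatibly with the bimodule structures, so the analogue of Proposition~\ref{salaud} holds with $\Sigma_V$ and $\Sigma_V^*$ interchanged: the dual of $\Sigma_V^* \otimes_A M$ is $M^* \otimes_A \Sigma_V$, fitting into
\[
\xymatrix{
0 \ar[r] & M^* \otimes_A V \ar[r] & M^* \otimes_A \Sigma_V \ar[r] & M^* \ar[r] & 0
}
\]
which is the dual (and, since $V$ is perfect, the double dual) of \eqref{stuck2} tensored with $M$.

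Next, I would observe that the canonical morphism whose sections we want to produce is the projection $\Sigma_V \otimes_A M^* \to M^*$ of the $\Sigma_V$-Atiyah sequence for the module $M^*$; but $\Sigma_V \otimes_A M^*$ is not literally $\Sigma_{V}^* \otimes_A M^*$, so one final identification is needed. The cleanest route is to define a derived connection on $M^*$ not via $\Sigma_V$ but by using the symmetry $\chi\colon \Sigma_V^* \xrightarrow{\sim} (\Sigma_V^*)^{\mathrm{op}}$ of Lemma~\ref{swap}: given the section $s\colon M \to \Sigma_V^* \otimes_A M$ in $\mathrm{D}(A)$, apply $\mathrm{Hom}_A(-,A)$, compose with the isomorphism from the (transposed) Proposition~\ref{salaud} and with $\chi$ on the $\Sigma_V^*$-factor, to land a map $M^* \to \Sigma_V^* \otimes_A M^*$ which, by the compatibility of exact sequences in Proposition~\ref{salaud} together with the fact that $\chi$ is a section-preserving involution of \eqref{stuck2}, is again a section of the structural morphism $\Sigma_V^* \otimes_A M^* \to M^*$. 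That section is by definition $\nabla^*$, and naturality is immediate since every morphism used ($(-)^*$, the isomorphism of Proposition~\ref{salaud}, $\chi$) is natural in the module argument.

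The main obstacle I anticipate is the bookkeeping in the previous paragraph: one must carefully track which copy of the bimodule $\Sigma_V^*$ is being dualized and which involution ($\mathbb{D}$ versus $(-)^{\mathrm{op}}$ — recall the \raisebox{0.08cm}{\danger} warning that $\mathbb{D}(K)^{\mathrm{op}} \not\simeq \mathbb{D}(K^{\mathrm{op}})$ in general) intervenes at each step, and verify that after all these identifications the resulting map really is a one-sided section of the correct projection rather than of its opposite. Concretely this amounts to checking that $\chi$ intertwines the inclusion $V^* \hookrightarrow \Sigma_V^*$ with the quotient $\Sigma_V^* \twoheadrightarrow A$ in the way required for the dualized section to remain a section — which follows from the explicit formula $\chi(\theta,a)=(\rho^*(da)-\theta,a)$ — but it is where the signs and the left/right conventions must be handled with care. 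At the level of a representative triple $(\widetilde{M},q,\nabla)$ with $\widetilde M$ cofibrant, this can also be phrased down-to-earth: $\nabla^*$ is the unique $\mathbf{k}$-linear map $(M)^* \to V^*\otimes_A M^*$ (up to the span equivalence) satisfying the Leibniz rule and the pairing identity $\langle \nabla^*(\xi), m\rangle + (-1)^{\ast}\langle \xi, \nabla(\tilde m)\rangle = d\langle \xi, q(\tilde m)\rangle$ under $\rho^*$, and existence/uniqueness of such a map is precisely what Proposition~\ref{salaud} guarantees.
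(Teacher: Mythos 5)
Your route is genuinely different from the paper's, and as written it does not close. The paper proves Lemma~\ref{dudu} by a direct construction: it defines $\nabla^*$ through the pairing formula $\mathbf{ev}^*(\tilde m\otimes\nabla^*\phi)=\rho^*(d\phi(m))-\mathbf{ev}(\nabla(\tilde m)\otimes\phi)$ and then verifies by hand that this is well defined ($A$-linear in $\tilde m$) and satisfies the Leibniz rule; those two computations are the entire content of the lemma, and no hypothesis on $M$ enters. Your dualization argument instead rests on identifications that are asserted but not available. First, the ``transposed'' Proposition~\ref{salaud} computing $(\Sigma_V^*\otimes_A M)^*$ is not a formal consequence of Lemma~\ref{swap} and Lemma~\ref{zwip}: the proof of Proposition~\ref{salaud} is a genuine computation with the anchor twist (the map $\Theta$ there involves $\rho$ explicitly), and the \emph{danger} remark that $\mathbb{D}(K)^{\mathrm{op}}\not\simeq\mathbb{D}(K^{\mathrm{op}})$ is precisely what prevents deducing the swapped statement from symmetry of $\Sigma_V^*$ plus one-sided perfectness; it would need its own proof. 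Second, after your identification the middle term is $M^*\otimes_A\Sigma_V$, which has no $\Sigma_V^*$-factor, so ``composing with $\chi$ on the $\Sigma_V^*$-factor'' does not typecheck: $\chi$ is the symmetry of $\Sigma_V^*$, and $\Sigma_V$ itself is \emph{not} symmetric. To arrive at the connection sequence $0\to V^*\otimes_A M^*\to\Sigma_V^*\otimes_A M^*\to M^*\to 0$ you would need a further comparison (between left and right $\Sigma_V$-tensoring, or a detour through \eqref{HKR} for $M^*$) that you do not supply. Relatedly, your displayed dual sequence has sub and quotient interchanged ($M^*$ must be the sub of the dual of \eqref{stuck2}$\otimes_A M$, as in Proposition~\ref{salaud}), and the dual of a section is a retraction, so the passage back to a section of the correct projection has to go through the split-sequence dictionary — this is exactly the bookkeeping you flag, and it is where the argument currently stops.

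Your closing ``down-to-earth'' formula is in fact the paper's definition of $\nabla^*$, but its existence is not ``precisely what Proposition~\ref{salaud} guarantees'': Proposition~\ref{salaud} is a duality statement about $\Sigma_V\otimes_A M$ and does not produce this map; the paper produces it by the explicit verification you skip. A repairable version of your strategy does exist — from a connection on $M$ deduce $\Theta_M=0$ (Proof 1 of Proposition~\ref{stargarder}, which is independent of Lemma~\ref{dudu}, so no circularity), dualize \eqref{HKR}, and apply Proposition~\ref{salaud} and Lemma~\ref{swap} to split the connection sequence of $M^*$ — this is essentially Proof 2 of Proposition~\ref{stargarder} read backwards. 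But it only yields existence of \emph{some} derived connection on $M^*$, not the natural $\nabla^*$ dual to $\nabla$ (which is what the correspondence invoked in Proof 2 of Proposition~\ref{stargarder} actually uses), and it requires care with naive versus derived duals since $M$ is an arbitrary module. So either prove the transposed duality statement and fix the $\chi$ step, or do what the paper does and check the pairing formula directly.
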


\begin{proof}
Let us consider the evaluation morphisms 
\[
\begin{cases}
\mathbf{ev} \colon V^* \otimes_A M \otimes_A M^* \rightarrow V^* \\
\mathbf{ev}^* \colon  \widetilde{M} \otimes_A V^* \otimes_A \widetilde{M}^*  \rightarrow V^*
\end{cases}
\]
given by
\[
\begin{cases}
\mathbf{ev}(\delta \otimes m \otimes \phi)= \delta \otimes \phi(m) \\
\mathbf{ev}^*(\tilde{m} \otimes \delta \otimes \varphi)= (-1)^{|\delta|. |\varphi|}  \varphi(\tilde{m}) \delta.
\end{cases}
\]
Then we define
\[
\nabla^* \colon M^* \rightarrow V^* \otimes \widetilde{M}^*
\] 
by the formula
\[
\mathbf{ev}^*(\tilde{m} \otimes \nabla^*\phi)=\rho^*( d\phi(m))- \mathbf{ev}(\nabla (\tilde{m})\otimes \phi)
\]
where we put $q(\tilde{m})=m$ to lighten notation. 
First we must verify that this formula makes sense, that is, is $A$-linear in $m$. 
\begin{align*}
\mathbf{ev}^*&(a\tilde{m} \otimes \nabla^* \phi)\\
&=\rho^*( d\phi(am))- \mathbf{ev}(\nabla (a\tilde{m})\otimes \phi) \\
&= a \rho^*( d\phi(m)) + \rho^*(da) \phi(m) - \mathbf{ev}(a \nabla(\tilde{m}) \otimes \phi + \rho^*(da) \otimes m \otimes \phi) \\
&=a \rho^*( d\phi(m))-a  \mathbf{ev}(\nabla(\tilde{m}) \otimes \phi) \\
&= a\, \mathbf{ev}^*(\tilde{m} \otimes \nabla^*\phi).
\end{align*}

Then we check Leibniz rule: 
\begin{align*}
&\mathbf{ev}^*(\tilde{m} \otimes \nabla^*(a\phi)) \\
& =(-1)^{|\phi|. |a|} \{\rho^*( d(\phi(m)a)) - \mathbf{ev}(\nabla (\tilde{m})\otimes \phi a) \} \\
& = (-1)^{|\phi|. |a|} \phi(m) \rho^*(da) +  (-1)^{|\phi|. |a|}  \{\rho^*( d(\phi(m))) - \mathbf{ev}(\nabla (\tilde{m})\otimes \phi ) \} a \\
&= \mathbf{ev}^*(\tilde{m} \otimes q^* \circ \rho^*(da) \otimes \phi) +   (-1)^{|\phi|. |a|}\mathbf{ev}^*(\tilde{m} \otimes \nabla^* \phi ) a \\
&= \mathbf{ev}^*(\tilde{m} \otimes q^* \circ \rho^*(da) \otimes \phi) + \tilde{m} \otimes a\nabla^* \phi ).
\end{align*}
\end{proof}

Let $(V, \rho)$ be an anchored perfect $A$-module . For any $A$-module $M$, the $A$-module $\Sigma_V \otimes_A M$ fits into an exact sequence
\begin{equation} \label{HKR}
0 \rightarrow M \rightarrow  \Sigma_V  {\otimes}_A M \rightarrow V  {\otimes}_A M \rightarrow 0
\end{equation}
\begin{definition}
The HKR class of $M$ is the morphism 
\[
\Theta_M \colon V[-1]  {\otimes}_A M \rightarrow M
\]
in $\mathrm{D}(A)$ given by the extension class of \eqref{HKR}.
\end{definition}

\begin{proposition} \label{stargarder}
The HKR class of an $A$-module $M$ vanishes if and only if $M$ admits a derived $(V, \rho)$-connection.
\end{proposition}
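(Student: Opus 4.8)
The plan is to interpret both conditions in terms of the extension class of the short exact sequence \eqref{HKR} and to use the duality between $\Sigma_V \otimes_A M \to M$ and $\Sigma_V^* \otimes_A M \to M$ established in Proposition \ref{salaud}. Recall that a derived $(V,\rho)$-connection on $M$ is, by definition, a section in $\mathrm{D}(A)$ of the morphism $\Sigma_V^* \otimes_A M \to M$ coming from the exact sequence \eqref{stuck2} tensored with $M$. On the other hand, the HKR class $\Theta_M \colon V[-1] \otimes_A M \to M$ is the boundary morphism in $\mathrm{D}(A)$ attached to \eqref{HKR}, and it vanishes precisely when \eqref{HKR} splits in $\mathrm{D}(A)$, i.e. when the surjection $\Sigma_V \otimes_A M \to V \otimes_A M$ admits a section in $\mathrm{D}(A)$, equivalently when the injection $M \to \Sigma_V \otimes_A M$ admits a retraction in $\mathrm{D}(A)$.

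First I would record the elementary homological fact that, for a short exact sequence of $A$-modules $0 \to M' \to E \to M'' \to 0$ with all terms h-flat (which holds here by Lemma \ref{zwip} and the running hypothesis that $V$ is perfect), the extension class in $\mathrm{Hom}_{\mathrm{D}(A)}(M'', M'[1])$ vanishes if and only if the sequence splits in $\mathrm{D}(A)$, if and only if $E \to M''$ admits a section in $\mathrm{D}(A)$, if and only if $M' \to E$ admits a retraction in $\mathrm{D}(A)$. Applying this to \eqref{HKR}, the vanishing of $\Theta_M$ is equivalent to the existence of a retraction $r \colon \Sigma_V \otimes_A M \to M$ in $\mathrm{D}(A)$ of the inclusion $M \hookrightarrow \Sigma_V \otimes_A M$.

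Second, I would dualize. Since $V$ is perfect, $\Sigma_V$ and $\Sigma_V^*$ are perfect as left $A$-modules, hence reflexive, and the duality functor $(-)^* = \mathrm{Hom}_A(-,A)$ is exact on the relevant sequences and sends the inclusion $M \hookrightarrow \Sigma_V \otimes_A M$ to the surjection $(\Sigma_V \otimes_A M)^* \to M^*$. By part (2) of Proposition \ref{salaud}, this surjection is identified with $M^* \otimes_A \Sigma_V^* \to M^*$, i.e. (using the symmetry of $\Sigma_V^*$ from Lemma \ref{swap}, or simply reading off the $A$-module structures) with $\Sigma_V^* \otimes_A M^* \to M^*$. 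A retraction in $\mathrm{D}(A)$ of $M \hookrightarrow \Sigma_V \otimes_A M$ dualizes to a section in $\mathrm{D}(A)$ of $\Sigma_V^* \otimes_A M^* \to M^*$; conversely, because $M \hookrightarrow \Sigma_V \otimes_A M$ is a map between h-flat modules whose double dual recovers the original map (again using reflexivity of $\Sigma_V$), a section of the dual produces a retraction of the original by dualizing back. Thus $\Theta_M = 0$ iff $M^*$ admits a derived $(V,\rho)$-connection.

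The remaining, and genuinely substantive, point is to pass from "$M^*$ admits a derived connection" to "$M$ admits a derived connection". Here I would avoid dualizing $M$ altogether and instead run the argument of the previous paragraph with the roles of the two Atiyah sequences swapped: tensoring \eqref{stuck2} with $M$ gives $0 \to V^* \otimes_A M \to \Sigma_V^* \otimes_A M \to M \to 0$, whose extension class in $\mathrm{Hom}_{\mathrm{D}(A)}(M, V^* \otimes_A M [1]) = \mathrm{Hom}_{\mathrm{D}(A)}(V[-1]\otimes_A M, M)$ is, by the same compatibility of extension classes under the duality of Proposition \ref{salaud}, equal (up to sign and the canonical identifications) to the HKR class $\Theta_M$ itself. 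Concretely: dualizing \eqref{HKR} and invoking Proposition \ref{salaud}(2) identifies the dual of \eqref{HKR} with the sequence $0 \to V^* \otimes_A M^{**} \to \Sigma_V^* \otimes_A M^{**} \to M^{**} \to 0$, and since $M$ need not be reflexive this is not literally the sequence we want — so instead I would argue directly that a section in $\mathrm{D}(A)$ of $\Sigma_V^* \otimes_A M \to M$ is the same datum as a retraction in $\mathrm{D}(A)$ of $M \to \Sigma_V \otimes_A M$, by the dualizability of the perfect modules $\Sigma_V, \Sigma_V^*$: applying $\mathrm{Hom}_A(-, M)$ (not $\mathrm{Hom}_A(-,A)$) to the split-in-$\mathrm{D}(A)$ question and using that $\Sigma_V^* \simeq \mathrm{Hom}_A(\Sigma_V, A)$ with $\Sigma_V$ perfect gives $\Sigma_V^* \otimes_A M \simeq \mathrm{Hom}_A(\Sigma_V, A) \otimes_A M \simeq \mathrm{Hom}_A(\Sigma_V, M)$ in $\mathrm{D}(A)$, under which the projection $\Sigma_V^* \otimes_A M \to M$ becomes precisely the restriction $\mathrm{Hom}_A(\Sigma_V, M) \to \mathrm{Hom}_A(M, M)$ along $M \hookrightarrow \Sigma_V$, whose sections taking $\mathrm{id}_M$ as target are exactly the $\mathrm{D}(A)$-retractions of $M \hookrightarrow \Sigma_V \otimes_A M$. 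Combining this with the first paragraph closes the equivalence.

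The main obstacle I anticipate is bookkeeping rather than conceptual: matching the various exact sequences \eqref{HKR}, \eqref{stuck1}, \eqref{stuck2} and their duals under the isomorphisms of Proposition \ref{salaud} while keeping track of which $A$-module structure (left, right, or the bimodule twist) is in play at each stage, since the paper has flagged (with the \danger symbol) that $K \otimes_A M$ carries two inequivalent bimodule structures. The cleanest route is the reflexive-rewriting $\Sigma_V^* \otimes_A M \simeq \mathrm{Hom}_A(\Sigma_V, M)$ in $\mathrm{D}(A)$, which turns "existence of a section" into "existence of an extension of $\mathrm{id}_M$ along a perfect-module inclusion", a statement manifestly equivalent to the splitting of \eqref{HKR}, hence to $\Theta_M = 0$.
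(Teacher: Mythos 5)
Your final argument (first paragraph plus third paragraph) is correct, and its decisive step is a different packaging from either of the paper's two proofs. The paper's first proof establishes exactly the correspondence you extract from the identification $\Sigma_V^*\otimes_A M\simeq \mathrm{Hom}_A(\Sigma_V,M)$, but by hand: a section of $\Sigma_V^*\otimes_A M\to M$ is rewritten (using perfectness of $V$) as a $\mathbf{k}$-linear Leibniz map $\nabla\colon V\otimes_{\mathbf{k}}\widetilde{M}\to M$, which is then extended by an explicit formula to a retraction $\mu\colon \Sigma_V\otimes_A\widetilde{M}\to M$ of \eqref{HKR}, and conversely; your tensor--hom adjunction for the bimodule $\Sigma_V$ (perfect on both sides by Lemma \ref{zwip}) replaces those sign computations by naturality and, like that proof, needs no hypothesis on $M$. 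The paper's second proof is precisely the dualization route of your second paragraph, and it is exactly there that the paper assumes $M$ reflexive; in that paragraph your justification of the converse (``dualizing back \dots using reflexivity of $\Sigma_V$'') is not correct, since reflexivity of $\Sigma_V$ does not make the double dual of $\Sigma_V\otimes_A M$ recover $\Sigma_V\otimes_A M$ --- one needs $M$ reflexive, which is why the paper carries that hypothesis and why Lemma \ref{dudu} is invoked there. Since you discard that paragraph in the final argument, no harm results.

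Two repairs to your third paragraph. First, under $\Sigma_V^*\otimes_A M\simeq \mathrm{Hom}_A(\Sigma_V,M)$ the projection to $M$ is restriction along $A\hookrightarrow\Sigma_V$ (evaluation at $1$), identified with $\mathrm{Hom}_A(A,M)\simeq M$; the phrase ``restriction $\mathrm{Hom}_A(\Sigma_V,M)\to\mathrm{Hom}_A(M,M)$ along $M\hookrightarrow\Sigma_V$'' does not typecheck. Second, the passage from sections of this projection to retractions of $M\hookrightarrow\Sigma_V\otimes_A M$ is the adjunction $\mathrm{Hom}_{\mathrm{D}(A)}(M,\mathrm{Hom}_A(\Sigma_V,M))\simeq\mathrm{Hom}_{\mathrm{D}(A)}(\Sigma_V\otimes_A M,M)$ for the bimodule $\Sigma_V$, which you should state explicitly, checking that the isomorphism $\Sigma_V^*\otimes_A M\simeq\mathrm{Hom}_A(\Sigma_V,M)$ carries the twisted left action $(a\star\phi)(s)=\phi(sa)$ on the source to precomposition with the right action of $a$ on $\Sigma_V$ on the target. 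This is exactly the bimodule bookkeeping you flag at the end; it does go through, and once written out your route is a clean, more conceptual version of the paper's first proof.
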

\begin{proof}
We will give two different proofs of this result, which are somehow dual. However, the second proof works only under the extra assumption that $M$ is reflexive (which will be almost always the case in the applications).
\par \medskip
\textbf{Proof 1}\,\, As $V$ is perfect, we can see a derived $(V, \rho)$ connection on $M$ as an $A$-linear map
\[
\nabla \colon V \otimes_{\mathbf{k}} \widetilde{M} \rightarrow M
\]
such that $
\forall v \in V, \forall a \in A, \forall \tilde{m} \in \widetilde{M}$,
\[ 
\nabla_{v} (a\tilde{m})= (-1)^{|v| |a|} a \nabla_v(\tilde{m}) + \rho(v)(a) q(\tilde{m}).
\]
Assume to be given such a connection. We claim that the map
\[
\Sigma_V \otimes_{A} \widetilde{M} \rightarrow M
\]
given by
\begin{equation} \label{pigeonnier}
\mu \{(a, v) \otimes \tilde{m}\} = a q(\tilde{m}) + \nabla_{v} \tilde{m}
\end{equation}
is well defined. We check: 
\begin{align*}
\mu \{(a, v)a' \otimes \tilde{m}\}  &= \mu \{(aa' + \rho(v)(a'), va') \otimes \tilde{m} \} \\
&= (aa' + \rho(v)(a')) q(\tilde{m})+\nabla_{va'} \tilde{m} \\
&= (aa' + \rho(v)(a')) q(\tilde{m}) + (-1)^{|v| . |a|'} a' \nabla_v \tilde{m} \\
&=a q(a'\tilde{m}) + \nabla_v (a' \tilde{m}) \\
&=  \mu \{(a, v) \otimes a' \tilde{m}\}.
\end{align*}
Now the diagram $\xymatrix{ \widetilde{M} \ar[r] \ar[rd]_-{q} & \Sigma_V \otimes \widetilde{M} \ar[d]^-{\mu} \\
& M }
$ commutes, so $\Theta_M$ vanishes. Conversely, if $\Theta_M$ vanishes, there exists a quasi-isomorphism $q \colon \widetilde{M} \rightarrow {M}$ and a map \[
\delta \colon V \otimes_A \widetilde{M} \rightarrow \Sigma_V \otimes_A M
\] 
such that  the diagram $\xymatrix{\Sigma_V \otimes_A {M} \ar[dr] & V \otimes_A \widetilde{M} \ar[l]_-{\delta} \ar[d]^-{\mathrm{id}_V \otimes q} \\ & V \otimes_A M}$
commutes. 
\par \medskip
Let us now consider the map $\nabla \colon V \otimes_{\mathbf{k}} \widetilde{M} \rightarrow \Sigma_V \otimes_A M$ defined by the formula
\[
\nabla(v \otimes \tilde{m}) = \delta (v \otimes q(m)) - (0, v) \otimes q(m).
\]
The composition 
\[
V \otimes_{\mathbf{k}} \widetilde{M} \xrightarrow{\nabla} \Sigma_V \otimes_A M \rightarrow V \otimes_A M
\] 
vanishes, so $\nabla$ factors through $M$. Since $\nabla$ is obviously $A$-linear, it remains to check that $\nabla$ satisfies Leibniz rule. 
\begin{align*}
\nabla (v \otimes a \tilde{m})&= \delta (v \otimes a q(m)) - (0, v) \otimes aq(m) \\
&= \delta (v a\otimes  q(m)) - (\rho(v)(a), va) \otimes q(m) \\
&= (-1)^{|v|.|a|} a \nabla(v \otimes \tilde{m})  (\rho(v)(a), 0) \otimes q(m).
\end{align*}
\par \medskip
\textbf{Proof 2}\,\, This proof is closer in spirit to the construction of \cite{kapranov_rozansky-witten_1999} but for this approach it is needed that $M$ be reflexive. Assuming this and using Proposition \ref{salaud} (1), the sequence \eqref{HKR} splits if and only if and only if the dual sequence
\[
0 \rightarrow (V \otimes_A M)^* \rightarrow  (\Sigma_V  {\otimes}_A M)^* \rightarrow M^* \rightarrow 0
\]
splits. According to Proposition \ref{salaud} (2), this sequence is isomorphic
\[
0 \rightarrow M^* \otimes_A V^* \rightarrow   M^* \otimes_A \Sigma_V^* \rightarrow M^* \rightarrow 0.
\]
Lastly, using Lemma \ref{swap}, this sequence is isomorphic to 
\[
0 \rightarrow V^* \otimes_A M^* \rightarrow    \Sigma_V^* \otimes_A M^* \rightarrow M^* \rightarrow 0
\]
so it splits if and only if $M^*$ admits a derived $(V, \rho)$ connection. Thanks to Lemma \ref{dudu} and to the fact that $M$ is reflexive, there is a one to one correspondence between $(V, \rho)$ connections on $M$ and on $M^*$. This finishes the proof.
\end{proof}

\begin{proposition} \label{eberswalder}
For any $A$-modules $M_1$ and $M_2$, we have 
\[
\Theta_{M_1 \overset{\mathbb{L}}{\otimes}_A M_2}=\Theta_{M_1} \overset{\mathbb{L}}{\otimes}_A \mathrm{id}_{M_2} +  \mathrm{id}_{M_1} \overset{\mathbb{L}}{\otimes}_A \Theta_{M_2}.
\]
\end{proposition}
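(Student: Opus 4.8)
The plan is to rewrite both sides as genuine extension classes and then to recognise the Atiyah-type extension of $M_1\otimes_A M_2$ as a Baer sum of the two others.

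\emph{Reduction to the h-flat case.} First I would replace $M_i$ by h-flat resolutions $P_i\to M_i$. Since $V$, $\Sigma_V$ and $\Sigma_V^*$ are perfect, hence h-flat as left and as right $A$-modules (Lemma~\ref{zwip} and the subsequent remark), the sequence \eqref{HKR} stays exact after tensoring and computes the derived object; moreover $P_1\otimes_A P_2$ is again h-flat and represents $M_1\overset{\mathbb{L}}{\otimes}_A M_2$. Hence it suffices to prove the corresponding identity of honest extension classes in $\mathrm{Ext}^1_A\big(V\otimes_A P_1\otimes_A P_2,\, P_1\otimes_A P_2\big)$. Here $\Theta_{M_1}\overset{\mathbb{L}}{\otimes}\mathrm{id}_{M_2}$ is represented by the sequence obtained from \eqref{HKR} for $P_1$ by applying the functor $-\otimes_A P_2$ on \emph{left} $A$-modules, so that $\Sigma_V\otimes_A P_1$ appears with its \emph{trivial} bimodule structure rather than the one coming from the bimodule $\Sigma_V$; symmetrically $\mathrm{id}_{M_1}\overset{\mathbb{L}}{\otimes}\Theta_{M_2}$ is represented by $P_1\otimes_A-$ applied to \eqref{HKR} for $P_2$, its right-hand term $P_1\otimes_A V\otimes_A P_2$ being identified with $V\otimes_A P_1\otimes_A P_2$ through the canonical braiding of the honest $A$-module $V$. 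By contrast $\Theta_{M_1\otimes M_2}$ is represented by \eqref{HKR} for $P_1\otimes_A P_2$, i.e. by $\Sigma_V\otimes_A(P_1\otimes_A P_2)\cong(\Sigma_V\otimes_A P_1)\otimes_A P_2$ where now $\Sigma_V\otimes_A P_1$ carries the bimodule structure whose right $A$-action is the $\rho$-twisted one induced from $\Sigma_V$ — precisely the distinction flagged in \S\ref{bimod}.

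\emph{Identification with a Baer sum.} I claim that \eqref{HKR} for $P_1\otimes_A P_2$ is the Baer sum of the two sequences above. The underlying mechanism is the Leibniz rule for the anchor: writing $\Sigma_V=A\oplus V$ and using the right action $(a,v)\cdot a'=\big(aa'+\rho(v)(a'),\,(-1)^{|a'||v|}a'v\big)$ of \eqref{stuck1}, the discrepancy between the $\rho$-twisted and the trivial right $A$-actions on $\Sigma_V\otimes_A P_1$ is multiplication by $\rho(v)(a')$ into the $A$-summand; once this is transported across $P_1\otimes_A P_2$, the derivation identity $\rho(v)(a'a'')=\rho(v)(a')a''+(-1)^{|v||a'|}a'\rho(v)(a'')$ splits it into one part coupling the $\Sigma_V$-slot to $P_1$ and one part coupling it to $P_2$. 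Concretely, I would check that $\Sigma_V\otimes_A(P_1\otimes_A P_2)$ is canonically isomorphic to the quotient of the fibre product of $(\Sigma_V\otimes_A P_1)\otimes_A P_2$ and $P_1\otimes_A(\Sigma_V\otimes_A P_2)$ over their common quotient $V\otimes_A P_1\otimes_A P_2$, modulo the antidiagonal copy of $P_1\otimes_A P_2$, in a way respecting the inclusions of $P_1\otimes_A P_2$ and the projections to $V\otimes_A P_1\otimes_A P_2$; this is exactly the Baer sum, and additivity of extension classes under Baer sum gives the formula. (Equivalently, in cocycle terms: after choosing $A$-linear sections of \eqref{HKR} for $P_1$ and for $P_2$, one assembles a section of \eqref{HKR} for $P_1\otimes_A P_2$ whose connecting cocycle is literally $\theta_{P_1}\otimes\mathrm{id}_{P_2}+\mathrm{id}_{P_1}\otimes\theta_{P_2}$.)

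\emph{The main difficulty.} The hard part is not conceptual but bookkeeping, and it is genuinely delicate: at every step one must keep track of the two non-isomorphic bimodule structures on $\Sigma_V\otimes_A P_i$ (trivial versus $\rho$-twisted) — the very trap emphasised throughout \S\ref{bimod} — together with all Koszul signs, because it is precisely the discrepancy between these structures that produces the second summand $\mathrm{id}_{M_1}\overset{\mathbb{L}}{\otimes}\Theta_{M_2}$, and getting it with coefficient $1$ and the correct sign is where the care is needed. As a sanity check, for $M_2=A$ both sides reduce to $\Theta_{M_1}$, because $\Theta_A=0$: indeed $0\to A\to\Sigma_V\to V\to 0$ already splits as a sequence of complexes of left $A$-modules.
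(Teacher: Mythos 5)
Your strategy is the same as the paper's: the paper forms exactly the Baer sum you describe (the fibre product of $M_1\otimes_A\Sigma_V\otimes_A M_2$ and $M_2\otimes_A\Sigma_V\otimes_A M_1$ over $V\otimes_A M_1\otimes_A M_2$, modulo the antidiagonal copy of $M_1\otimes_A M_2$) and then writes down the comparison map from $\Sigma_V\otimes_A M_1\otimes_A M_2$ explicitly, using the characteristic-zero device $(a,v)\mapsto(\tfrac{a}{2},v)$ to distribute the $\rho$-twist symmetrically between the two components. So your proposal is correct and essentially identical in approach; the sign and well-definedness bookkeeping you defer is precisely the content of the paper's computations.
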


\begin{proof}

We can assume that $M_1$ and $M_2$ are h-flat. We introduce an automorphism $s \rightarrow s^{\dagger}$ of $\Sigma_V$ given by the formula $(a, v)^{\dagger}=(\frac{a}{2}, v)$. Let
\[
T=\dfrac{M_1 \otimes_A \Sigma_V \otimes_A M_2 \times_{V \otimes_A M_1 \otimes_A M_2} M_2 \otimes_A \Sigma_V \otimes_A M_1}{\{m_1 \otimes 1 \otimes m_2, - (-1)^{|m_1|.|m_2|} m_2 \otimes 1 \otimes m_1 \}}.
\]
We claim that the map $\tau \colon \Sigma_V \otimes_A M_1 \otimes_A M_2  \xrightarrow{\tau} T$ given by 
\[
\tau(s \otimes m_1 \otimes m_2)=\Bigl( (-1)^{|s|.|m_1|} m_1 \otimes s^{\dagger} \otimes m_2, (-1)^{(|s|+|m_1|) |m_2|} m_2 \otimes s^{\dagger} \otimes m_1 \Bigr)
\]
is well defined. This claim follows from the following calculations, where we put $\vartheta=\frac{\rho(\pi_V(s))(a')}{2}$ to lighten notation:
\par \medskip
\fbox{$\tau( s \otimes m_1 \otimes a' m_2) - \tau (s \otimes m_1 a' \otimes m_2)$} 
\par \medskip
\begin{align*}
\textrm{\ding{62}} \, &\textrm{First component} \\
&(-1)^{|s|.|m_1|} m_1 \otimes s^{\dagger} \otimes a'm_2 - (-1)^{|s|.(|m_1|+|a'|)} m_1 a' \otimes s^{\dagger} \otimes m_2 \\
&=(-1)^{|s|.|m_1|} m_1 \otimes (s^{\dagger} a' - (-1)^{|s|.|a'|} a' s^{\dagger}) m_2 \\
&= 2(-1)^{|s|.|m_1|} m_1 \otimes \vartheta \otimes m_2 \\
&= 2(-1)^{|a'|.|m_1|} \vartheta \,m_1 \otimes 1 \otimes m_2. \\
& \\
\textrm{\ding{62}} \, &\textrm{Second component} \\
& (-1)^{(|s|+|m_1|)(|a'|+|m_2|)} a' m_2 \otimes s^{\dagger} \otimes a_1\\
& \qquad \qquad -(-1)^{(|s|+|m_1|+|a'|) |m_2|} m_2 \otimes s^{\dagger} \otimes m_1 a' \\
&=(-1)^{|s|.|m_2|+|m_1|.|m_2|+|a'|.|m_2|+|a'|.|m_1|} m_2 \otimes ((-1)^{|s|.|a'|}a' s^{\dagger}-s^{\dagger}a') \otimes m_1\\
&=-2(-1)^{|s|.|m_2|+|m_1|.|m_2|+|a'|.|m_2|+|a'|.|m_1|} m_2 \otimes  \vartheta \otimes m_1 \\
&=-2(-1)^{|m_1|.|m_2|+|a'|.|m_1|} \vartheta \,m_2 \otimes 1 \otimes m_1
\end{align*}
\par \medskip
\fbox{$\tau(sa' \otimes m_1 \otimes  m_2) - \tau (s \otimes a'm_1 \otimes m_2)$} 
\par \medskip
First we remark that if $s=(a, v)$,
\begin{align*}
(sa')^{\dagger}&=(aa'+\rho(v)a', va')^{\dagger}=\left(\frac{aa'+\rho(v)a'}{2}, va'\right) \\
&= s^{\dagger} a' - \left(\frac{\rho(v)a'}{2}, 0\right)\\
&= s^{\dagger} a' - \vartheta
\end{align*}
Using this, we compute: 
\par \smallskip
\begin{align*}
\textrm{\ding{62}} \, &\textrm{First component} \\
&(-1)^{(|s|+|a'|)|m_1|} m_1 \otimes (sa')^{\dagger} \otimes m_2 - (-1)^{|s|.(|a'|+|m_1|)} a'm_1 \otimes s \otimes m_2 \\
&=(-1)^{(|s|+|a'|)|m_1|} m_1 \otimes (s^{\dagger} a'-(-1)^{|s|.|a'|} a' s^{\dagger}-\vartheta) \otimes m_2 \\
&=(-1)^{(|s|+|a'|)|m_1|} m_1 \otimes \vartheta \otimes m_2 \\
&= \vartheta m_1 \otimes 1 \otimes m_2. \\
& \\
\textrm{\ding{62}} \, &\textrm{Second component} \\
&(-1)^{(|s|+|a'|+|m_1|) |m_2|} (m_2 \otimes (sa')^{\dagger} \otimes m_1 - m_2 \otimes s^{\dagger} \otimes a'm_1) \\
&=(-1)^{(|s|+|a'|+|m_1|) |m_2|} m_2 \otimes ((sa')^{\dagger} - s^{\dagger} \otimes a') \otimes m_1 \\
&=-(-1)^{(|s|+|a'|+|m_1|) |m_2|} m_2 \otimes \vartheta \otimes m_1 \\
&=-(-1)^{|m_1|.|m_2|} \vartheta m_2 \otimes 1 \otimes m_1.
\end{align*}
Now the Baer sum of the two exact sequences
\[
\begin{cases}
0 \rightarrow M_1 \otimes_A M_2 \rightarrow M_1 \otimes_A \Sigma_V \otimes_A M_2 \rightarrow V \otimes_A M_1 \otimes_A M_2 \rightarrow 0 \\ 
0 \rightarrow M_1 \otimes_A M_2 \rightarrow M_2 \otimes_A \Sigma_V \otimes_A M_1 \rightarrow V \otimes_A M_1 \otimes_A M_2
\rightarrow 0
\end{cases}
\]
is
\[
0 \rightarrow M_1 \otimes_A M_2 \rightarrow  T \rightarrow V \otimes_A M_1 \otimes_A M_2 \rightarrow 0
\]
where the first inclusion is given by $m_1 \otimes m_1 \rightarrow (m_1 \otimes 1 \otimes m_2, 0)$ and the second one is the natural one. We claim we have a commutative diagram
\[
\xymatrix{
0 \ar[r] &  M_1 \otimes_A M_2 \ar[r] \ar@{=}[d] & \Sigma_V \otimes_A M_1 \otimes_A M_2 \ar[r] \ar[d]^-{\tau}  & V \otimes_A M_1 \otimes_A M_2 \ar[r] \ar@{=}[d] & 0 \\
0 \ar[r] &  M_1 \otimes_A M_2 \ar[r] & T \ar[r]  & V \otimes_A M_1 \otimes_A M_2 \ar[r] & 0
}
\]
The commutativity of the right square is straightforward, and the commutativity of the left square follows from
\begin{align*}
&\tau(1 \otimes m_1 \otimes m_2) = \frac{1}{2} \left( m_1 \otimes 1 \otimes m_2, (-1)^{|m_1|.|m_2|} m_2 \otimes 1 \otimes m_1 \right) \\
&= (m_1 \otimes 1 \otimes m_2, 0)-\frac{1}{2} \left( m_1 \otimes 1 \otimes m_2, -(-1)^{|m_1|.|m_2|} m_2 \otimes 1 \otimes m_1 \right) \\
&= (m_1 \otimes 1 \otimes m_2, 0).
\end{align*}
This finishes the proof.
\end{proof}

\section{Universal enveloping algebra of free Lie algebroids}

\subsection{Structure theorem}

We consider the two pairs of adjoint functors
\[
\xymatrix@C=50pt{
\mathbf{mod}_{A / \mathbb{T}_A} \ar@<2pt>[r]^{\textit{free}} & \ar@<2pt>[l]^{\textit{forget}}
\mathbf{Lie}_{\mathbf{k}/A} \ar@<2pt>[r]^{\mathrm{U}_{\mathbf{k}/A}} & \ar@<2pt>[l]^{\mathcal{P}} \mathbf{alg}_{A/\mathbf{End}_\mathbf{k}(A)}
}
\]
Our goal is to describe explicitly the functor $\mathrm{U}_{\mathbf{k}/A}\circ \textit{free}$. For this we need to introduce some notation: let $V$ be an object of $\mathbf{mod}_{A / \mathbb{T}_A}$. For any nonnegative integer $n$, 
\begin{enumerate}
\item[--] We denote by $\Sigma_V^{[n]}$ the co-equalizer of  the $n-1$ maps from $\Sigma_V^{\otimes n-1}$ to $\Sigma_V^{\otimes n}$ induced by the map $1 \rightarrow A \rightarrow \Sigma_V$,
\item[--] We denote by $(\Sigma_V^*)^{[n]}$ the equalizer of the $n-1$ maps from $(\Sigma_V^*)^{\otimes n}$ to $(\Sigma_V^*)^{\otimes n-1}$ induced by the map $\Sigma_V^* \rightarrow A$.
\end{enumerate}

\begin{theorem} \label{prenzlauer}
Let $V$ be an object of $\mathbf{mod}_{A / \mathbb{T}_A}$. 
\begin{enumerate}
\item[(a)] There is a unique algebra structure on $\underset{n}{\varinjlim} \,\Sigma_V^{[n]} $ such that the multiplication 
is induced by the algebra structure of $\mathrm{T} \Sigma_V$. \\
\item[(b)] There is a canonical isomorphism $\underset{n}{\varinjlim} \,\Sigma_V^{[n]} \simeq \mathrm{U}_{\mathbf{k}/A}(\textit{free}(V, \rho))$ between algebra objects in $\mathbf{corr}_A^{\, nilp}$.  \\
\item[(c)] The universal enveloping algebra $\mathrm{U}_{\mathbf{k}/A}(\textit{free}(V, \rho))$ carries a natural filtration such that for any positive integer $n$:
\[
\begin{cases}
\mathrm{F}^n \mathrm{U}_{\mathbf{k}/A}(\textit{free}(V, \rho)) \simeq \Sigma_V^{[n]} \\
\mathrm{Gr}^n \mathrm{U}_{\mathbf{k}/A}(\textit{free}(V, \rho)) \simeq V^{\otimes n}.
\end{cases}
\]
Besides, $\mathrm{F}^n \mathrm{U}_{\mathbf{k}/A}(\textit{free}(V, \rho))$ lies in $\mathbf{corr}_A^{\, \leq n}$.
\end{enumerate}
\end{theorem}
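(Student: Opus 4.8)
The plan is to realize $\mathrm{U}_{\mathbf{k}/A}(\textit{free}(V,\rho))$ as an explicit quotient of the tensor algebra $\mathrm{T}_A(\Sigma_V)$ of $\Sigma_V$ in $\mathbf{corr}_A$ (the left adjoint of the forgetful functor $\mathbf{alg}_A\to\mathbf{corr}_A$), and then to read off the filtration from the weight grading of that tensor algebra. Write $e$ for the image of $1\in A$ under $A\hookrightarrow\Sigma_V$ from \eqref{stuck1}, regarded as a weight-one element, and set $\mathcal{U}=\mathrm{T}_A(\Sigma_V)/(e-1)$. First I would equip $\mathcal{U}$ with an anchor: the composite $\Sigma_V\to\mathbf{Diff}^{\,\leq1}_{\mathbf{k}}(A)\hookrightarrow\mathbf{Diff}_{\mathbf{k}}(A)\to\mathbf{End}_{\mathbf{k}}(A)$ is a morphism of bimodules (a short sign verification using the right action $(a,v).a'=(aa'+\rho(v)(a'),(-1)^{|a'||v|}a'v)$ and the Leibniz rule for $\rho(v)$), it extends to an $A$-algebra map on $\mathrm{T}_A(\Sigma_V)$, and since it sends $e$ to $\mathrm{id}_A$ it descends to an $A$-linear algebra morphism $\sigma\colon\mathcal{U}\to\mathbf{End}_{\mathbf{k}}(A)$. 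The key point is then that, for any $R$ in $\mathbf{alg}_{A/ \mathbf{End}_{\mathbf{k}}(A)}$, morphisms of bimodules $\Sigma_V\to R$ sending $e$ to $1_R$ are the same as morphisms $V\to\mathcal{P}(R)$ in $\mathbf{mod}_{A/\mathbb{T}_A}$: writing such a morphism as $(a,v)\mapsto a\,1_R+\psi(v)$, left $A$-linearity forces $\psi$ to be $A$-linear, right $A$-linearity is exactly the primitivity relation $\psi(v)a'-(-1)^{|a'||v|}a'\psi(v)=\rho(v)(a')$, and compatibility of algebra maps with anchors matches $\sigma\circ\psi=\rho$. Combining the resulting natural bijection $\mathrm{Hom}_{\mathbf{alg}_{A/ \mathbf{End}_{\mathbf{k}}(A)}}(\mathcal{U},R)\simeq\mathrm{Hom}_{\mathbf{mod}_{A/\mathbb{T}_A}}(V,\mathcal{P}(R))$ with the defining adjunction of $\textit{free}$ and with Proposition \ref{ciment} gives $\mathrm{Hom}_{\mathbf{alg}_{A/ \mathbf{End}_{\mathbf{k}}(A)}}(\mathcal{U},R)\simeq\mathrm{Hom}_{\mathbf{alg}_{A/ \mathbf{End}_{\mathbf{k}}(A)}}(\mathrm{U}_{\mathbf{k}/A}(\textit{free}(V,\rho)),R)$ naturally in $R$, hence $\mathcal{U}\simeq\mathrm{U}_{\mathbf{k}/A}(\textit{free}(V,\rho))$ by Yoneda.

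Next I would transport the weight grading. The concatenation product on $\mathrm{T}_A(\Sigma_V)$ carries the ideal $(e-1)$ into itself in a way compatible with the ``insert $e$'' identifications, so it descends to the coequalizers $\Sigma_V^{[n]}$ and their transition maps; this yields part (a) — the product on $\varinjlim_n\Sigma_V^{[n]}$, unique since $\Sigma_V=\Sigma_V^{[1]}$ generates and $\mathrm{T}_A(\Sigma_V)$ surjects onto the colimit. On $\mathcal{U}$ define $\mathrm{F}^n$ as the image of $\bigoplus_{k\leq n}\Sigma_V^{\otimes_A k}$. Modulo $e-1$ every word of weight $<n$ equals a word of weight $n$ obtained by inserting copies of $e$, so $\Sigma_V^{\otimes_A n}\twoheadrightarrow\mathrm{F}^n$; the relations in weight $n$ are precisely that two insertions of $e$ in different gaps of a weight-$(n-1)$ word coincide, which identifies $\mathrm{F}^n$ with $\Sigma_V^{[n]}$ and $\varinjlim_n\Sigma_V^{[n]}$ with $\mathcal{U}$, giving (b). For the nilpotency in (c): from \eqref{stuck1} one checks $\delta_a(\Sigma_V)\subseteq A$ and that $\delta_a$ vanishes on the sub-bimodule $A\subset\Sigma_V$, while each $\delta_a$ is a derivation for $\otimes_A$; hence any composite of $n+1$ of the operators $\delta_{a_i}$ annihilates $\Sigma_V^{\otimes_A n}$, a fortiori its quotient $\mathrm{F}^n\mathrm{U}_{\mathbf{k}/A}(\textit{free}(V,\rho))$, so $\mathrm{F}^n\mathrm{U}_{\mathbf{k}/A}(\textit{free}(V,\rho))\in\mathbf{corr}_A^{\,\leq n}$ (and $n=1$ recovers $\Sigma_V\in\mathbf{corr}_A^{\,\leq1}$), and the isomorphism of (b) therefore takes place in $\mathbf{corr}_A^{\,\mathrm{nilp}}$.

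It remains to identify $\mathrm{Gr}^n$. The weight grading makes $\mathrm{T}_A(\Sigma_V)$ a filtered algebra for the filtration above, and the leading term of a generator $u(e-1)w$ is $-u\otimes e\otimes w$; since the sub-bimodule generated by $e$ is exactly $A$, one has $\mathrm{T}_A(\Sigma_V)/(e)\simeq\mathrm{T}_A(V)$, so $\mathrm{Gr}^n\mathrm{U}_{\mathbf{k}/A}(\textit{free}(V,\rho))$ is a quotient of $V^{\otimes_A n}$; conversely the projection $\pi\colon\Sigma_V\to V$ induces a surjection $\Sigma_V^{[n]}\to V^{\otimes_A n}$ that kills $\mathrm{F}^{n-1}$, hence a surjection $\mathrm{Gr}^n\mathrm{U}_{\mathbf{k}/A}(\textit{free}(V,\rho))\twoheadrightarrow V^{\otimes_A n}$. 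The statement is that these are mutually inverse, i.e. that the symbol map $V^{\otimes_A n}\to\mathrm{Gr}^n$ is injective — a PBW-type assertion, and the one genuinely substantive point. I would prove it by dévissage along \eqref{stuck1}: filter $\Sigma_V^{\otimes_A n}$ by the number of tensor factors lying in the sub-bimodule $A$, use that $\Sigma_V$ is perfect (Lemma \ref{zwip}) hence h-flat as a left and right $A$-module, so that \eqref{stuck1} stays exact after $\otimes_A$, identify the associated graded with a direct sum of terms $V^{\otimes_A j}$, and follow this filtration through the coequalizers defining $\Sigma_V^{[n]}$ and through $\mathrm{F}^n/\mathrm{F}^{n-1}=\mathrm{Gr}^n$. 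This is the step I expect to require the most care, as it is exactly where the exactness of \eqref{stuck1} and the h-flatness of $\Sigma_V$ are indispensable; everything else reduces to bookkeeping with the tensor algebra and its universal property.
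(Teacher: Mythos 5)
Most of your proposal tracks the paper's own argument closely: realizing the enveloping algebra as the tensor algebra of $\Sigma_V$ with the unit $e$ of $A\subset\Sigma_V$ identified with $1$ is exactly the paper's construction (there phrased via the graded ideal $\mathcal{I}\subset\mathrm{T}\Sigma_V$ spanned by differences of insertions of $1$, whose quotient has graded pieces $\Sigma_V^{[n]}$), and your identification of $\mathbf{alg}_{A/\mathbf{End}_\mathbf{k}(A)}$-maps $\mathcal{U}\to R$ with anchored maps $V\to\mathcal{P}(R)$, combined with Proposition \ref{ciment} and the \textit{free}--forget adjunction, is the same universal-property argument the paper runs (uniqueness there coming from generation by $V$ via the rewriting $a\otimes s_2\otimes\cdots=1\otimes as_2\otimes\cdots$). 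Your nilpotency argument for $\mathrm{F}^n\in\mathbf{corr}_A^{\,\leq n}$ via the Leibniz rule for $\delta_a$ is correct, and in fact more explicit than the paper, which leaves that point to the reader.

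The gap is at the identification of the filtration steps and the associated graded. In your second paragraph you assert that ``the relations in weight $n$ are precisely that two insertions of $e$ coincide, which identifies $\mathrm{F}^n$ with $\Sigma_V^{[n]}$''; but with $\mathrm{F}^n$ defined as the image of $\Sigma_V^{\otimes n}$ in $\mathcal{U}$, this is exactly the injectivity of $\Sigma_V^{[n]}\to\varinjlim_m\Sigma_V^{[m]}$, i.e.\ the same PBW-type statement you then single out in your last paragraph as ``the one genuinely substantive point'' and only sketch (``I would prove it by d\'evissage\dots''). As written the argument is therefore circular at its key step, and the proposed d\'evissage is not carried out; worse, it invokes Lemma \ref{zwip} and h-flatness of $\Sigma_V$, i.e.\ perfectness of $V$, which is not a hypothesis of Theorem \ref{prenzlauer}. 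The paper takes a simpler and hypothesis-free route: it \emph{defines} $\mathrm{F}^n$ to be $\Sigma_V^{[n]}$ itself (with its structure maps), and proves that the kernel of the surjection $\Sigma_V^{[n]}\to V^{\otimes n}$ is exactly the image of $\Sigma_V^{[n-1]}$ --- since by right-exactness of $\otimes_A$ the module $V^{\otimes n}$ is the quotient of $\Sigma_V^{\otimes n}$ by the images of the $e$-insertions, any kernel class is represented by tensors having a factor in $A$, and the rewriting trick from part (b) pushes such tensors into the image of $\Sigma_V^{[n-1]}$. No flatness enters, and this suffices for the statement as the paper reads it. If you insist on the stronger reading in which $\mathrm{F}^n$ is a genuine subobject of the colimit (so the transition maps must be injective), then you must actually write out your flatness d\'evissage (or an equivalent PBW argument); otherwise adopt the paper's formulation, for which your remaining steps already suffice.
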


\begin{remark}
All these statements admit their counterpart for the jet algebra $\mathrm{J}_{\mathbf{k}/A}(\textit{free}(V, \rho))$. It is isomorphic to $\underset{n}{\varprojlim} (\Sigma_V^*)^{[n]}$, and carries a co-filtration whose graded pieces are $\mathrm{T}^n V^*$. Here the ring structure we put on $\mathrm{T} \Sigma_V^*$ is the one given by the shuffle product (i.e. we use the cocommutative coproduct of $\mathrm{T} \Sigma_V$).
\end{remark}

\begin{proof}

(a) Let $\mathcal{I}$ be the graded sub $A$-module of the tensor algebra $\mathrm{T} \Sigma_V$ defined as follows: $\mathcal{I}^n$ is the span of all elements of the form 
\[
s_1 \otimes \ldots \otimes s_{p-1} \otimes 1 \otimes s_p \otimes \ldots \otimes s_{n-1} - s_1 \otimes \ldots \otimes s_{q-1} \otimes 1 \otimes s_q \otimes \ldots \otimes s_{n-1}
\] 
for any integers $p$ and $q$ with $1 \leq p, q \leq n$, where we see $1$ as an element of $\Sigma_V$ via the map $A \rightarrow \Sigma_V$. Then $\mathcal{I}$ is obviously a graded \footnote{Here we take the natural gradation on the tensor algebra, so elements of $\Sigma_V$ are of degree one.} two sided ideal of $\mathrm{T} \Sigma_V$, so that there is a natural multiplicative morphism
\[
\mathrm{T} \Sigma_V \rightarrow \mathrm{T} \Sigma_V / \mathcal{I}
\]
and graded pieces of $\mathrm{T} \Sigma_V / \mathcal{I}$ are exactly the $\Sigma_V^{[n]}$. Now the following diagrams
\[
\xymatrix{\Sigma_V^{[p-1]} \otimes \Sigma_V^{[q]} \ar[r] \ar[d] & \Sigma_V^{[p]} \otimes \Sigma_V^{[q]} \ar[d] \\
\Sigma_V^{[p+q-1]} \ar[r] & \Sigma_V^{[q]}
} \qquad \xymatrix{\Sigma_V^{[p]} \otimes \Sigma_V^{[q-1]} \ar[r] \ar[d] & \Sigma_V^{[p]} \otimes \Sigma_V^{[q]} \ar[d] \\
\Sigma_V^{[p+q-1]} \ar[r] & \Sigma_V^{[q]} 
}
\]
commute, which gives the first point.
\par \medskip
(b) Let $(R, p)$ be an object in $\mathbf{alg}_{A/\mathbf{End}_\mathbf{k}(A)}$ and let $\phi \colon V \rightarrow R$ be a map that commutes with the anchors. Then, for any $a$ in $A$, 
\[
\phi(v) a - (-1)^{|v| \times |a|} a \phi(v)=p(\phi(v))(a)=\rho(v)(a).
\]
We can now define a map $\Phi$ from $\Sigma_V$ to $R$ as follows: it maps $a$ to $a$ (that is to $a.1$ where $1$ is the unit of $R$), and maps $V$to $R$ via $\varphi$. We claim that this map is a morphism in $\mathbf{corr}_A$. Indeed, for the left $A$-linearity this is obvious, and for the right $A$-linearity, we have
\begin{align*}
\Phi(a', v) a &= a' a +\phi(v) a \\
&= a' a  + \rho(v)(a) + (-1)^{|v| \times |a|} a \phi(v) \\
&= \Phi((a', v) a)
\end{align*}
By the universal property of the tensor algebra, this defines a multiplicative map $\mathrm{T} \Sigma_V \rightarrow R$. Since $1$ (considered again as an element of $\Sigma_V$) is mapped to the unit of $R$, this map vanishes obviously on the graded ideal $\mathcal{I}$, and furthermore, the diagram 
\[
\xymatrix@R=10pt@C=15pt{\Sigma_V^{[n-1]} \ar[dd] \ar[rd] & \\
& R\\
\Sigma_V^{[n]} \ar[ru]}
\]
commutes. Hence we can fill in the dotted arrow
\[
\xymatrix@R=20pt@C=10pt{\underset{n}{\varinjlim} \,\Sigma_V^{[n]} \ar@{.>}[rd]& \\
V \ar[u] \ar[r] & R
}
\]
To prove the uniqueness of the lift, it suffices to prove that the algebra $\underset{n}{\varinjlim} \,\Sigma_V^{[n]}$ is generated by $V$. To do that, we take any element in this algebra, say $s_1 \otimes \ldots \otimes s_n$. We write $s_1=a+v$. Then
\begin{align*}
s_1 \otimes \ldots \otimes s_n &= a \otimes s_2 \otimes \ldots s_n + v \otimes s_2 \otimes \ldots \otimes s_n \\
&= 1 \otimes  as_2 \otimes \ldots s_n + v \otimes s_2 \otimes \ldots \otimes s_n \\
&= as_2 \otimes \ldots s_n + v . ( s_2 \otimes \ldots \otimes s_n)
\end{align*}
Applying this algorithm repeatedly, we see that $\Sigma_V^{[n]}$ is the span of elements of the form $v_1 \otimes \ldots \otimes v_n$.
\par \medskip
(c) We define the filtration on $\underset{n}{\varinjlim} \,\Sigma_V^{[n]}$ by setting the nth piece as $\Sigma_V^{[n]}$. There is a natural map $\Sigma_V^{[n]} \rightarrow \Sigma_V^{\otimes n} \rightarrow V^{\otimes n}$ given by the morphism $\Sigma_V \rightarrow V$. This map is obviously onto. Its kernel contains $\Sigma_V^{[n-1]}$, let us prove this is an equality. We take an element of the form $s_1 \otimes \ldots \otimes s_n$ in the kernel. Decomposing each $s_i$ as $a_i + v_i$, we see that 
$s_1 \otimes \ldots \otimes s_n$ can be split as the sum of $v_1  \otimes \ldots \otimes v_n$ and elements in which at least one element in every pure tensor lies in $A$. Using the same trick as in (b), these elements lie in $\Sigma_V^{[n-1]}$. This finishes the proof.
\end{proof}
%
%

\subsection{Derived construction}

In this section, we explain how the previous construction fits in the setting of derived categories. We can consider the derived category $\mathrm{D}(\mathbf{corr}_A)$ of bimodules. As a triangulated category, this is simply $\mathrm{D}(A \otimes_{\mathbf{k}} A)$ but the monoidal structure differs: for instance the unit is $A$ and not $A \otimes_{\mathbf{k}} A$.

\begin{lemma}
The functor
\begin{align*}
\mathbf{mod}_{A/\mathbb{T}_A}^{\mathrm{dual}} & \rightarrow \mathbf{corr}_A^{\mathrm{nilp}} \\
(V, \rho) & \rightarrow \mathrm{U}_{\mathbf{k}/A}(\textit{free}({V}, {\rho}))
\end{align*}
respects quasi-isomorphisms.
\end{lemma}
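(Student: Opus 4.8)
The plan is to read off the statement from the explicit description of $\mathrm{U}_{\mathbf{k}/A}(\textit{free}(V,\rho))$ obtained in Theorem~\ref{prenzlauer}, the key inputs being that perfect $A$-modules are h-flat and that filtered colimits of $A$-modules are exact. Let $f\colon (V,\rho)\to (V',\rho')$ be a quasi-isomorphism in $\mathbf{mod}_{A/\mathbb{T}_A}^{\mathrm{dual}}$, i.e. an $A$-linear quasi-isomorphism $V\to V'$ with $\rho'\circ f=\rho$. Since $f$ is compatible with the anchors, it induces a morphism of bimodules $\Sigma_f\colon \Sigma_V\to\Sigma_{V'}$ which is the identity on the sub-bimodule $A$; comparing the exact sequences \eqref{stuck1} for $V$ and $V'$ and chasing the associated long exact sequence in cohomology shows that $\Sigma_f$ is a quasi-isomorphism.

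First I would reduce to the pieces of the filtration of Theorem~\ref{prenzlauer}(c). By parts (b) and (c) of that theorem, applied functorially, the morphism $\mathrm{U}_{\mathbf{k}/A}(\textit{free}(f))$ is filtered, its $n$-th filtered piece is the map $\Sigma_V^{[n]}\to\Sigma_{V'}^{[n]}$ induced by $\Sigma_f$ on the defining coequalizers, and the whole morphism is the colimit of these maps along the injective transition maps. Since filtered colimits of $A$-modules are exact and hence commute with cohomology, it is enough to show that each $\Sigma_V^{[n]}\to\Sigma_{V'}^{[n]}$ is a quasi-isomorphism.

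I would prove this by induction on $n$, using the natural short exact sequences
\[
0\longrightarrow \Sigma_V^{[n-1]}\longrightarrow \Sigma_V^{[n]}\longrightarrow V^{\otimes_A n}\longrightarrow 0
\]
furnished by Theorem~\ref{prenzlauer}(c) (these are exactly $0\to \mathrm{F}^{n-1}\to \mathrm{F}^n\to \mathrm{Gr}^n\to 0$, and they were constructed explicitly in the proof of that theorem). The cases $n=0,1$ are trivial, since $\Sigma_V^{[0]}=A$ and $\Sigma_V^{[1]}=\Sigma_V$. For the inductive step, $V$ and $V'$ are perfect, hence h-flat as left and right $A$-modules, so the (underived) functor $(-)^{\otimes_A n}$ preserves quasi-isomorphisms and $f^{\otimes_A n}\colon V^{\otimes_A n}\to V'^{\otimes_A n}$ is a quasi-isomorphism; combined with the induction hypothesis and the five lemma applied to the long exact cohomology sequences of the two short exact sequences above, this yields that $\Sigma_V^{[n]}\to\Sigma_{V'}^{[n]}$ is a quasi-isomorphism, closing the induction.

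The one point that needs genuine care — and which I expect to be the only real obstacle, though it is bookkeeping rather than a new idea — is the functoriality of everything in $(V,\rho)$: that the isomorphism of Theorem~\ref{prenzlauer}(b), the filtration of (c), and the identifications $\mathrm{Gr}^n\simeq V^{\otimes_A n}$ are all natural, so that $\mathrm{U}_{\mathbf{k}/A}(\textit{free}(f))$ really is the colimit of the maps $\Sigma_f^{[n]}$ and induces $f^{\otimes_A n}$ on graded pieces. This follows by inspecting the proof of Theorem~\ref{prenzlauer}: the assignments $V\mapsto \Sigma_V$, $V\mapsto \mathrm{T}\Sigma_V$, and the graded ideal $\mathcal{I}$ are manifestly functorial, and the adjunctions defining $\textit{free}$ and $\mathrm{U}_{\mathbf{k}/A}$ are natural by construction. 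Everything else is a formal consequence of h-flatness of perfect modules together with exactness of filtered colimits.
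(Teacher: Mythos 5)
Your proposal is correct and follows essentially the same route as the paper: the induced map on universal enveloping algebras is filtered, its associated graded map is $\mathrm{T}V \rightarrow \mathrm{T}V'$, and h-flatness of perfect modules makes each $V^{\otimes n} \rightarrow V'^{\otimes n}$ a quasi-isomorphism, which one then propagates through the filtration. You merely spell out the details the paper leaves implicit (induction on the filtration steps via the five lemma, exactness of the filtered colimit, and functoriality of the constructions in Theorem~\ref{prenzlauer}).
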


\begin{proof}
Let ${V}_1 \rightarrow {V}_2$ be a morphism between two perfect anchored $A$-modules. The morphism
\[
\mathrm{U}_{\mathbf{k}/A}(\textit{free}({V}_1, {\rho}_1)) \rightarrow \mathrm{U}_{\mathbf{k}/A}(\textit{free}({V}_2,  {\rho}_2))
\]
is filtered, and the corresponding graded morphism is $\mathrm{T}{V}_1 \rightarrow \mathrm{T} {V}_2$. Since the ${V}_i$ are h-flat, this is a quasi-isomorphism. 
\end{proof}

It follows from this lemma that the isomorphism class of $\mathrm{U}_{\mathbf{k}/A}(\textit{free}({V}, {\rho}))$ in $\mathrm{D}(\mathbf{corr}_A^{\mathrm{nilp}})$ only depends of the class of the anchor map $\rho$ in $\mathrm{D}(A)$.

\subsection{Formality}

We fix again $(V, \rho)$. For every $A$-module $M$, we can consider the $A$-module 
\[
M_V= \mathrm{U}_{\mathbf{k}/A}(\textit{free}\,(V, \rho)) \otimes_A M.
\]
Remark that $\mathrm{U}_{\mathbf{k}/A}(\textit{free}\,(V, \rho))$ is h-flat, so there is no need to take the derived tensor product. The $A$-module $M_V$ has a natural filtration, induced by the filtration on the universal enveloping algebra. The corresponding graded algebra is $\mathrm{T}V \otimes_A M$.
\begin{theorem}
Let $M$ be an $A$-module. Then the following properties are equivalent: 
\begin{enumerate}
\item[(a)] The filtration on $M_V$ splits in $\mathrm{D}(A)$.
\item[(b)] The filtration on $\mathrm{F}^2 M_V$ splits in $\mathrm{D}(A)$.
\item[(c)] $\Theta_M$ and $\Theta_{V \otimes M}$ vanish.
\end{enumerate}
\end{theorem}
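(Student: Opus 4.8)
The plan is to prove a cycle of implications, or rather to reduce everything to the behavior of the HKR classes of $M$ and $V\otimes M$ via the extension-theoretic description of the filtration. The implications $(a)\Rightarrow(b)$ are trivial since $\mathrm{F}^2 M_V$ is a subobject of $M_V$ (a splitting of the whole filtration restricts). The implication $(b)\Rightarrow(c)$ should be immediate from Theorem \ref{prenzlauer}(c): the associated graded of the filtration on $\mathrm{F}^2 M_V$ gives, after tensoring with $M$, the two-step filtration whose graded pieces are $M$, $V\otimes_A M$ and $V^{\otimes 2}\otimes_A M$; the bottom extension class (of $\mathrm{F}^1 M_V=\Sigma_V\otimes_A M$ by \eqref{HKR}) is $\Theta_M$, and splitting $\mathrm{F}^2 M_V$ forces in particular the splitting of this sub-extension, so $\Theta_M=0$; similarly the quotient extension $\mathrm{F}^2 M_V/\mathrm{F}^0$ is, after identifying $\mathrm{F}^2/\mathrm{F}^1\simeq V^{\otimes 2}\otimes_A M$ and using that $\mathrm{F}^1/\mathrm{F}^0\simeq V\otimes_A M$, governed by $\Theta_{V\otimes M}$, because $\mathrm{F}^1 M_{V}/\mathrm{F}^0\simeq\Sigma_V\otimes_A(V\otimes_A M)$ modulo the identification of $\Sigma_V^{[2]}/A$ with $\Sigma_V\otimes_A V$ coming from part (c) of the structure theorem. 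So $(b)\Rightarrow(c)$ needs only the bookkeeping of identifying the two relevant two-term subquotients of $M_V$ with the Atiyah sequences \eqref{HKR} for $M$ and for $V\otimes_A M$.

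The substantive implication is $(c)\Rightarrow(a)$. Here I would use Proposition \ref{stargarder} to turn the vanishing of $\Theta_M$ and $\Theta_{V\otimes M}$ into actual derived $(V,\rho)$-connections $\nabla$ on $M$ and $\nabla'$ on $V\otimes_A M$, and then feed these connections into the universal property of $\mathrm{U}_{\mathbf{k}/A}(\textit{free}(V,\rho))$ from Theorem \ref{prenzlauer}(b). The idea is that a splitting of the whole filtration on $M_V$ amounts to endowing $\mathrm{T}V\otimes_A M$ with a $\mathrm{U}_{\mathbf{k}/A}(\textit{free}(V,\rho))$-module structure in $\mathrm{D}(A)$ lifting the obvious one on the associated graded, and such a module structure is the same as an object of $\mathbf{alg}_{A/\mathbf{End}_\mathbf{k}(A)}$-action, equivalently (by the adjunction $\textit{free}\dashv\textit{forget}$ composed with $\mathrm{U}_{\mathbf{k}/A}\dashv\mathcal{P}$) a lift of $\rho$ to an action of $V$ on $\mathrm{T}V\otimes_A M$ by the appropriate primitive/derivation-type operators. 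The connection $\nabla$ provides exactly the degree-zero-to-degree-one part of such an action, and $\nabla'$ (or more precisely the connections on all $V^{\otimes n}\otimes_A M$, which one builds from $\nabla$ and $\nabla'$ using Proposition \ref{eberswalder} to propagate the vanishing of HKR classes up the tensor powers) provides the higher parts. Concretely: Proposition \ref{eberswalder} gives $\Theta_{V^{\otimes n}\otimes_A M}=\sum_{i<n}\mathrm{id}^{\otimes i}\otimes\Theta_V\otimes\mathrm{id}+\mathrm{id}^{\otimes n}\otimes\Theta_M$, and one checks that $\Theta_M=0$ together with $\Theta_{V\otimes M}=0$ (the latter unwinding to $\Theta_V\otimes\mathrm{id}_M+\mathrm{id}_V\otimes\Theta_M=\Theta_V\otimes\mathrm{id}_M=0$, hence $\Theta_V$ acts as $0$ on anything built from $M$) forces $\Theta_{V^{\otimes n}\otimes_A M}=0$ for all $n$; thus every graded piece $V^{\otimes n}\otimes_A M$ carries a connection, and these assemble — using the compatibility of $\Theta$ with tensor products — into a splitting of the filtration.

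The main obstacle I anticipate is the last assembly step: having connections on each graded piece is not a priori enough to split a filtration with more than two steps, because the splitting must be coherent across all steps simultaneously. I would handle this by working at the level of the universal enveloping algebra rather than piece by piece: a derived $(V,\rho)$-connection on $M$ is, by the adjunction chain $\textit{free}\dashv\textit{forget}$ and $\mathrm{U}_{\mathbf{k}/A}\dashv\mathcal{P}$, precisely the data needed to extend the $A$-module $M$ to a $\mathrm{U}_{\mathbf{k}/A}(\textit{free}(V,\rho))$-module in $\mathrm{D}(A)$ — one should verify that the "derived action of $V$" encoded by $\nabla$ automatically satisfies the (homotopy-)Leibniz and bracket compatibility because $\textit{free}(V,\rho)$ is free, so there are no relations to check beyond the anchor relation, which is exactly Leibniz. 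Then $M_V=\mathrm{U}_{\mathbf{k}/A}(\textit{free}(V,\rho))\otimes_A M$ becomes, for such an $M$, induced from an honest $\mathrm{U}$-module, and the filtration splits because on the induced module it is split by construction (it is the free filtered module on the filtered module $M$ with trivial filtration). The one technical point requiring care is that the relevant module structure lives in $\mathrm{D}(A)$, not strictly, so I would phrase the argument using the cofibrant replacement $\widetilde M$ and the bimodule model $\Sigma_V^{[n]}$ of $\mathrm{F}^n\mathrm{U}_{\mathbf{k}/A}(\textit{free}(V,\rho))$ from Theorem \ref{prenzlauer}, extending $\nabla$ to a strict action of $\mathrm{T}\Sigma_V$ on $\widetilde M$ that descends to $\varinjlim_n\Sigma_V^{[n]}$; this is where the free-ness of the Lie algebroid is essential, since it lets me define the action on all of $\mathrm{T}\Sigma_V$ freely from its value on $\Sigma_V$, which is dictated by $q$ and $\nabla$.
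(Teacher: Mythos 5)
Your skeleton (the three implications, Proposition \ref{stargarder} for connections, Proposition \ref{eberswalder} to propagate vanishing) matches the paper, but both nontrivial implications have genuine gaps as written. In (b)$\Rightarrow$(c), the identification of the quotient extension with the HKR sequence of $V\otimes_A M$ is not "bookkeeping", and the isomorphism $\Sigma_V^{[2]}/A\simeq \Sigma_V\otimes_A V$ you invoke is not contained in Theorem \ref{prenzlauer}(c) and is false at the level you need: $\mathrm{F}^2 M_V=\Sigma_V^{[2]}\otimes_A M$ is formed using the \emph{right} $A$-action on $\Sigma_V^{[2]}$, and modulo $\mathrm{F}^0$ this action has \emph{two} defect terms, e.g.\ in the ungraded case $(v\otimes v')a\equiv a(v\otimes v')+\rho(v)(a)(1\otimes v')+\rho(v')(a)(1\otimes v)$, whereas $\Sigma_V\otimes_A V$ only has the first. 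After tensoring with $M$ the class of the extension of $V^{\otimes 2}\otimes_A M$ by $V\otimes_A M$ therefore picks up an extra contribution of type $\mathrm{id}_V\otimes\Theta_M$ and is not $\Theta_{V\otimes M}$ on the nose. The identification only becomes available after using $\Theta_M=0$, and even then it requires an argument: in the paper the derived connection furnished by Proposition \ref{stargarder} is used to produce a quasi-isomorphism $\Sigma_V^{[2]}\otimes_A M\simeq M\otimes_A\Sigma_V^{[2]}$, after which the remaining obstruction is read off as $\mathrm{id}_M\otimes\Theta_V$ and converted to $\Theta_{V\otimes M}$ by Proposition \ref{eberswalder}. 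Your step is repairable (you do have $\Theta_M=0$ at that point), but the justification you give for it is wrong and the actual argument is missing.

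In (c)$\Rightarrow$(a), your final assembly mechanism fails. If a derived $(V,\rho)$-connection on $M$ really upgraded $M$ to a $\mathrm{U}_{\mathbf{k}/A}(\textit{free}(V,\rho))$-module whose induced module $M_V$ had split filtration "by construction", then $\Theta_M=0$ alone would imply (a), contradicting the theorem itself (and your own (b)$\Rightarrow$(c)): $\Theta_{V\otimes M}$ need not vanish when $\Theta_M$ does. Two things go wrong: being induced from a module over the enveloping algebra does not split the filtration on $\mathrm{U}\otimes_A M$ (the filtration and its graded pieces are the same whatever extra structure $M$ carries); and a derived connection $\nabla\colon\widetilde{M}\rightarrow V^*\otimes_A M$ lands in $V^*\otimes_A M$, not $V^*\otimes_A\widetilde{M}$, so it cannot be iterated into an honest action without further strictification (this is why the paper must build a tower $\widetilde{M}_2\rightarrow\widetilde{M}_1\rightarrow M$ merely to iterate twice). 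The correct assembly, which you mention and then abandon, is elementary and is the paper's argument: Proposition \ref{eberswalder} gives $\Theta_{V^{\otimes k}\otimes M}=0$ for all $k$; one then constructs by induction a section in $\mathrm{D}(A)$ of $\Sigma_V^{\otimes n}\otimes_A M\rightarrow V^{\otimes n}\otimes_A M$, splitting the leftmost factor using $\Theta_{V^{\otimes n-1}\otimes M}=0$ and applying $\Sigma_V\otimes_A(-)$ to the section from the previous stage, and finally composes with $\Sigma_V^{\otimes n}\otimes_A M\rightarrow\Sigma_V^{[n]}\otimes_A M$ to split each $\mathrm{F}^n M_V\rightarrow\mathrm{Gr}^n M_V$; no module structure on $M$ is needed.
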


\begin{proof} $ $ \par \medskip
(a) $\Rightarrow$ (b) obvious.
\par \medskip
(b) $\Rightarrow$ (c) the exact sequence
\[
0 \rightarrow \mathrm{F}^0  M_V \rightarrow \mathrm{F}^1 M_V \rightarrow \mathrm{Gr}^1 M_V \rightarrow 0
\]
identifies with
\[
0 \rightarrow M \rightarrow \Sigma_V \otimes_A M \rightarrow V \otimes_A M \rightarrow 0.
\]
Hence the HKR class of $M$ vanish. Let us now investigate the map
\[
\mathrm{F}^2 M_V \rightarrow \mathrm{Gr}^2 M_V.
\]
Since $\Theta_M$ vanishes, thanks to Proposition \ref{stargarder}, there exists a derived connection on $M$. If $q \colon \widetilde{M} \rightarrow M$ is a corresponding quasi-isomorphism attached to this derived connection and $\mu$ is the map \eqref{pigeonnier} introduced in the proof of \textit{loc. cit}, the map
\[
\Sigma_V \otimes_A \widetilde{M} \rightarrow M \oplus (V \otimes M) \simeq M \otimes_A \Sigma_V
\]
given by 
\[
s \otimes m \mapsto \left( \mu(s \otimes m), \pi(s) \otimes q(m) \right)
\] 
is a quasi-isomorphism. It is possible to do this construction two times, that is to find a tower $\widetilde{M}_2 \rightarrow \widetilde{M}_1 \rightarrow M$ of quasi-isomorphisms, as well as quasi-isomorphisms
\[
\Sigma_V^{\otimes 2} \otimes_A \widetilde{M}_2 \simeq \Sigma_V \otimes_A \widetilde{M}_1 \otimes_A \Sigma_V \simeq M \otimes_A \Sigma_V^{\otimes 2}.
\]
Then we can consider the diagram
\[
\xymatrix{
\Sigma_V \otimes_A \widetilde{M}_2 \ar@<2pt>[r] \ar@<-2pt>[r] \ar[d] & \Sigma_V^{\otimes 2} \otimes_A \widetilde{M}_2 \ar[d] \\
M \otimes_A \Sigma_V  \ar@<2pt>[r] \ar@<-2pt>[r]& M \otimes_A \Sigma_V^{\otimes 2}
}
\]
which implies\footnote{This argument works in fact for any $n$: we have $\Sigma_V^{[n]} \otimes_A M \simeq M \otimes_A \Sigma_V^{[n]}$ as $A$-modules.} that there is a quasi-isomorphism 
\[
\Sigma_V^{[2]} \otimes_A M \simeq M \otimes_A \Sigma_V^{[2]}.
\]
We have now a better grasp on what is going on: indeed we have a digram
\[
\xymatrix@C=15pt{
\Sigma_V^{\otimes 2} \ar[d]\ar[r]^-{\sim} & \Sigma_V \oplus (\Sigma_V \otimes_A V )\ar[r]^-{\sim}&A \oplus V \oplus  (\Sigma_V \otimes_A V)\ar[r] & A \oplus (\Sigma_V \otimes_A V) \\
\Sigma_V^{[2]} \ar@/_1pc/[urrr]_-{\sim} &&
}
\]
where the last horizontal map is $(\mathrm{id},1 \otimes (\star), -\mathrm{id})$. Putting everything together, the map
$\Sigma_V^{[2]} \otimes_A M \rightarrow V^{\otimes 2} \otimes_A M$ is isomorphic in $\mathrm{D}(A)$ to the map
\[
M \oplus (M \otimes_A \Sigma_V \otimes_A V) \rightarrow M \otimes_A V^{\otimes 2},
\] where the first component is zero. Hence this map admits a splitting if and only if $\mathrm{id}_M \otimes \Theta_V$ vanishes. Then we conclude using Proposition \ref{eberswalder}.
\par \medskip
(c) $\Rightarrow$ (a) Using Proposition \ref{eberswalder} again, we see that for any integer $k$, $\Theta_{V^{\otimes k} \otimes M}$ vanishes. Then we construct a section of the morphism 
\[
\Sigma_V^{\otimes n} \otimes _A M \rightarrow V^{\otimes n} \otimes_A M
\]
by induction as follows:
\begin{align*}
V^{\otimes n} \otimes _A M & \simeq V \otimes (V^{\otimes n-1} \otimes _A M)  \\
& \rightarrow \Sigma_V \otimes _A (V^{\otimes n-1} \otimes _A M)  &&\textrm{since}\, \Theta_{V^{\otimes n-1} \otimes M}=0 \\
& \rightarrow \Sigma_V \otimes_A ( \Sigma_V^{\otimes n-1} \otimes_A M) &&\textrm{by induction}\\
&= V^{\otimes n} \otimes _A M.
\end{align*}
This finishes the proof, since the required splitting is simply given by the composition
\[
V^{\otimes n} \otimes _A M \rightarrow \Sigma_V^{\otimes n} \otimes _A M \rightarrow \Sigma_V^{[n]} \otimes _A M.
\]
\end{proof}

\section{Square zero extensions}

\subsection{Setting}
Let $(V, \rho)$ be a perfect anchored $A$-module, which is cohomologically concentrated in positive degrees.
\begin{definition}
The $A$-augmented $\mathbf{k}$-cdga $\mathcal{A}_{V, \rho}$ is defined as follows: the underlying module is $A \oplus V^{*}[-1]$ where $V^{*}$ denotes the derived dual of $V$ over $A$, the product is the one of the usual split square zero-extension on $A$ by $V^{*}[-1]$, and the differential is
\[
d(a, \phi)= (d_A(a), \rho^*(da) - d_{V^{*}}(\phi)).
\]
\end{definition}
Then $\mathcal{A}_{V, \rho}$ is a cdga cohomologically concentrated in nonpositive degrees. The exact sequence given by the augmentation ideal is
\begin{equation} \label{titi}
0 \xrightarrow{\iota} V^*[-1] \rightarrow \mathcal{A}_{V, \rho} \xrightarrow{\pi} A \rightarrow 0.
\end{equation}

\subsection{Extension theorem}
We state and prove our main result linking non split square zero extensions and derived $(V, \rho)$-connections.
\begin{theorem}
Let $M$ be a $A$-module. Then $M$ admits a derived $(V, \rho)$ connection if and only if it is the derived pullback of an $\mathcal{A}_{V, \rho}$-module. 
\end{theorem}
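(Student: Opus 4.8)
\emph{Proof strategy.} The plan is to build an explicit dictionary between dg $\mathcal{A}_{V,\rho}$-modules and the triples $(\widetilde{M}, q, \nabla)$ describing derived $(V,\rho)$-connections, using the square-zero ideal $V^*[-1]$ of $\mathcal{A}_{V,\rho}$ from \eqref{titi}; the theorem then follows by comparing essential images, and consistency with Proposition \ref{stargarder} (a connection exists iff $\Theta_M=0$) is automatic. Throughout I pass to h-flat, resp.\ cofibrant, representatives so that every derived tensor product below is computed naively. Write $J=V^*[-1]\subset\mathcal{A}_{V,\rho}$ for the augmentation ideal, and $\mathcal{A}_0=A\oplus V^*[-1]$ for the \emph{split} square-zero extension, i.e.\ $\mathcal{A}_{V,\rho}$ with $\rho$ replaced by $0$ (block-diagonal differential); the underlying graded algebras of $\mathcal{A}_{V,\rho}$ and $\mathcal{A}_0$ coincide, and $A\to\mathcal{A}_0$ is a genuine cdga map, whereas $A\to\mathcal{A}_{V,\rho}$ is \emph{not} a chain map — which is what makes the statement non-vacuous.

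\emph{From a lift to a connection.} Let $N$ be a cofibrant dg $\mathcal{A}_{V,\rho}$-module with $A\otimes^{\mathbb{L}}_{\mathcal{A}_{V,\rho}}N=A\otimes_{\mathcal{A}_{V,\rho}}N\simeq M$. Since $J^2=0$ and $d_{\mathcal{A}_{V,\rho}}(J)\subseteq J$, the submodule $JN$ is a subcomplex; and $N$ is projective over the underlying graded algebra $A\oplus J$, hence splits as $N\cong W\oplus(J\otimes_A W)$ as a graded $A$-module, where $W:=N/JN=A\otimes_{\mathcal{A}_{V,\rho}}N\simeq M$. Relative to this splitting $d_N$ is lower triangular, and I claim that its off-diagonal component $\beta\colon W\to J\otimes_A W=(V^*\otimes_A W)[-1]$ is exactly a derived $(V,\rho)$-connection. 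Indeed $d_N^2=0$ forces $\beta$ to be a morphism of complexes, while applying the dg-module relation $d_N(a\cdot n)=d_{\mathcal{A}_{V,\rho}}(a)\cdot n+(-1)^{|a|}a\,d_N n$, with $d_{\mathcal{A}_{V,\rho}}(a)=d_Aa+\rho^*(da)$, forces $\beta(aw)=a\beta(w)+\rho^*(da)\otimes w$ — the Leibniz rule. Thus $(W,q,\beta)$ is a derived $(V,\rho)$-connection on $M$, and we are done by Proposition \ref{stargarder}.

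\emph{From a connection to a lift.} Conversely, represent a derived $(V,\rho)$-connection on $M$ by a triple $(\widetilde{M},q,\nabla)$ as above, where I may take $\nabla\colon\widetilde{M}\to V^*\otimes_A\widetilde{M}$ to have target over the cofibrant model $q\colon\widetilde{M}\xrightarrow{\sim}M$ itself. Reversing the previous computation, set $N:=\widetilde{M}\oplus(V^*[-1]\otimes_A\widetilde{M})=\mathcal{A}_0\otimes_A\widetilde{M}$ as a graded $\mathcal{A}_{V,\rho}$-module, with differential whose only non-trivial off-diagonal term is $\nabla$; the chain-map property of $\nabla$ gives $d_N^2=0$, and its Leibniz rule makes $N$ an honest $\mathcal{A}_{V,\rho}$-module. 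Then $A\otimes_{\mathcal{A}_{V,\rho}}N=N/JN=\widetilde{M}\simeq M$, and since filtering $N$ by $J$ yields $\operatorname{gr}N=\mathcal{A}_0\otimes_A\widetilde{M}$, which is h-flat over $\operatorname{gr}\mathcal{A}_{V,\rho}=\mathcal{A}_0$ (extension of scalars of an h-flat module along the honest cdga map $A\to\mathcal{A}_0$), the module $N$ is itself h-flat over $\mathcal{A}_{V,\rho}$; hence $A\otimes^{\mathbb{L}}_{\mathcal{A}_{V,\rho}}N\simeq M$ and $M$ is a derived pullback.

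\emph{What is routine and what is not.} The routine points are: that a cofibrant $\mathcal{A}_{V,\rho}$-module splits as stated over its underlying graded algebra and that $N/JN$ really computes $A\otimes^{\mathbb{L}}_{\mathcal{A}_{V,\rho}}N$; the sign bookkeeping when trading the degree $+1$ off-diagonal piece of $d_N$ valued in $(V^*\otimes_A W)[-1]$ for the degree $0$ connection $\nabla$ valued in $V^*\otimes_A W$; and the two-step-filtration lemma used for h-flatness. The one genuinely structural observation — and the heart of the proof — is that the twisting term $\rho^*(da)$ built into the differential of $\mathcal{A}_{V,\rho}$ is, applied to any would-be lift, precisely the obstruction to that lift being an $A$-linear complex, i.e.\ precisely the Leibniz defect of a connection; once this is isolated, the two implications are literally the same calculation read in opposite directions.
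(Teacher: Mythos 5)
Your two constructions are, up to repackaging, the same as the paper's: reading the connection off the off--diagonal entry of the differential of a graded-split lift is the paper's ``cone of $\chi$ with the twisted $A$-action $(at,\,ax-\rho^*(da)\otimes\pi(t))$'', and your twisted $\mathcal{A}_0\otimes_A\widetilde{M}$ is the paper's cone of $-\nabla[-1]$ with the action $(a,\phi)\cdot(x,\tilde m)=(ax+\phi\otimes q(\tilde m),a\tilde m)$; the structural point you isolate (the summand $\rho^*(da)$ of $d_{\mathcal{A}_{V,\rho}}(a)$ is exactly the Leibniz defect) is what drives both of the paper's computations. Your forward direction is sound: a cofibrant lift is h-flat, so $N/JN$ does compute the derived pullback, and the graded splitting plus sign bookkeeping is legitimate.

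The gap is in the last step of the converse. The principle you invoke --- a finite filtration with $\mathrm{gr}\,N$ h-flat over $\mathrm{gr}\,\mathcal{A}_{V,\rho}$ forces $N$ to be h-flat over $\mathcal{A}_{V,\rho}$ --- is false. Over $\mathcal{A}=k[\epsilon]/(\epsilon^2)$ the two-periodic acyclic complex $\cdots\xrightarrow{\ \epsilon\ }k[\epsilon]\xrightarrow{\ \epsilon\ }k[\epsilon]\xrightarrow{\ \epsilon\ }\cdots$ has associated graded a direct sum of shifted free modules (hence h-flat), yet it is acyclic and not h-flat, since tensoring it with $k$ is not acyclic. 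Worse, this is not merely a defect of the justification: your $N$ itself can fail to be a lift. Take $A=k$, $V=k$ placed in degree $1$ (so $\rho=0$ is forced and $\mathcal{A}_{V,\rho}=k[\epsilon]/(\epsilon^2)$ with $\epsilon$ in degree $0$), $\widetilde{M}=M=\bigoplus_{n\in\mathbb{Z}}k[n]$ with zero differential, $q=\mathrm{id}$, and $\nabla$ the degree-one ``identity shift'' $M^n\to M^{n+1}=(V^*\otimes_AM)^n$; since $\rho=0$, the Leibniz rule is just $A$-linearity, so this is a perfectly admissible derived $(V,\rho)$-connection. Your twisted module $N$ is then exactly the two-periodic complex above, so $A\overset{\mathbb{L}}{\otimes}_{\mathcal{A}_{V,\rho}}N\simeq 0\neq M$, even though the underived quotient $N/JN$ is $M$. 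So the identification $A\overset{\mathbb{L}}{\otimes}_{\mathcal{A}_{V,\rho}}N\simeq M$ is genuinely not routine: one needs either boundedness or a more carefully chosen representative $(\widetilde{M},q,\nabla)$, or an argument exploiting that $V$ is perfect and concentrated in positive degrees, in the style of the truncation-and-induction proof of the paper's Koszul duality theorem (or an explicit Koszul-type semi-free resolution of $A$ over $\mathcal{A}_{V,\rho}$). To be fair, the paper's own proof is silent at this very point --- its module $T$ is essentially your $N$ --- but since you explicitly rest the step on the ``gr h-flat implies h-flat'' lemma, that is where your argument breaks and where it must be repaired.
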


\begin{proof}
Let $M$ be an $A$-module that can be written as the pullback of a flat $\mathcal{A}_{V, \rho}$-module $T$. Then there is an exact sequence of $A$-modules
\[
0 \rightarrow V^* \otimes_A M [-1] \xrightarrow{\chi} T \xrightarrow{\pi} M \rightarrow 0
\]
hence a morphism $M \rightarrow V^* \otimes_A M$ in $\mathrm{D}(\mathcal{A}_{V, \rho})$, which we want to underlie a derived $(V, \rho)$ connection on $M$. In order to achieve this, the crucial step is to endow the mapping cone of $\chi$, whose underlying $A$-module is $ T \oplus (V^* \otimes_A M)$,  with an $A$-module structure. We define it by a morphism
\begin{align*}
\vartheta \colon A \otimes_{\mathbf{k}} \mathrm{cone}(\chi) & \rightarrow \mathrm{cone}(\chi) \\
a \otimes (t, x) & \rightarrow (at, a x - \rho^*(da) \otimes \pi(t)).
\end{align*}
where $x$ is in $V^* \otimes_A M$. Let us check that $\vartheta$ is a morphism of complexes. We denote by $\delta$ the differential of $V^* \otimes_A M$ and ind the sequel, we will consider $\chi$ rather as a morphism from $V^* \otimes_A M$ to $T$ of degree $1$
that satisfies $\chi(ax)=(-1)^a \chi(x)$. Then for any $x$ of  $V^* \otimes_A M$ and any $t$ in $T$,
\begin{align*}
&\vartheta \left\{d_A(a) \otimes (t, x) + (-1)^{|a|} a \otimes(d_T(t)+\chi(x), \delta(x)) \right\} \\
&=\left\{d_A(a)t + (-1)^{|a|} (a d_T(t)+a \chi(x)), \right. \\
&\quad \left. \quad d_A(a) x - \rho^*(d [d_A(a)]) \otimes \pi(t) +(-1)^{|a|} (a \delta(x)-\rho^*(da) \otimes \pi(d_T(t)) \right\}
\end{align*}
and
\begin{align*}
d_{\mathrm{cone}(\chi)} \vartheta &\left\{a \otimes (t, x)\right\} \\
&=\left\{d_T(at)+\chi(ax - \rho^*(da)\otimes \pi(t)), \delta(ax-\rho^*(da)\otimes \pi(t)) \right\}
\end{align*}
We compute separately the two components: 
\begin{align*}
&d_T(at)+\chi(ax - \rho^*(da)\otimes \pi(t)) \\
&=d_B(a) t+(-1)^{|a|} a d_T(t) +(-1)^a \chi(x) - \chi(\rho^*(da) \otimes \pi(t)) \\
&=d_A(a) t +(-1)^{|a|} a d_T(t) +(-1)^{|a|} a \chi(x)
\end{align*}
and 
\begin{align*}
&\delta(ax+\rho^*(da)\otimes \pi(t))=d_A(a)x +(-1)^{|a|} a \delta(x) \\
& \qquad - \rho^*(d[d_A(a)]) \otimes \pi(t) - (-1)^{|a|} \rho^*(da) \otimes \pi(d_T(t)).
\end{align*}
On the other hand, $\vartheta$ is obviously associative, hence we get that it defines an $A$-action. Let us set 
$\widetilde{M}=\mathrm{cone}(\mu)$. The morphism
\[
-\pi \colon  \widetilde{M} \rightarrow M
\]
is a quasi-isomorphism. We define 
\[
\nabla \colon \widetilde{M} \rightarrow V^* \otimes_A M
\]
as the natural $\mathbf{k}$-linear morphism of complexes given by the projection on the first factor. Then we have
\begin{align*}
\nabla \{ a(t, x) \} &= \nabla (at, a x - \rho^*(da) \otimes \pi(t)) \\
&= a x - \rho^*(da) \otimes \pi(t) \\
&= a \nabla (t, x) + \rho^*(da) \otimes (-\pi)(t)
\end{align*}
so $\nabla$ defines a derived $(V, \rho)$-connection on $M$.
\par \medskip
Conversely, assumes that $M$ admits a derived $(V, \rho)$-connection $\nabla$. We can write $\nabla$ as a $\mathbf{k}$-linear morphism
\[
\nabla \colon \widetilde{M} \rightarrow V^* \otimes_A M
\]
where $q \colon \widetilde{M} \rightarrow M$ is a quasi-isomorphism. We define an $\mathcal{A}_{V, \rho}$-structure on the cone of $-\nabla[-1]$, whose underlying complex is $V^* \otimes_A M [-1] \oplus \widetilde{M}$, using the morphism
\begin{align*}
\Theta \colon A \otimes_{\mathbf{k}} \mathrm{cone}(-\nabla [-1]) & \rightarrow \mathrm{cone}(-\nabla[-1]) \\
(a, \phi) \otimes (x, \tilde{m}) & \rightarrow (ax +\phi \otimes q(\tilde{m}), a \tilde{m}).
\end{align*}
Here, contrarily to the first part of the proof, $x$ will be an element of $V^* \otimes M [-1]$ and $\delta$ will be the differential of $V^* \otimes M [-1]$.  As we did for $\chi$, it is better to see $\nabla$ as a morphism of of degree $1$ from $\tilde{M}$ to $V^* \otimes_A M[-1]$ satisfying 
\[
\nabla(a\tilde{m})=(-1)^{|a|} a \nabla(\tilde{m})+\rho^*(da) \otimes q(\tilde{m})
\]
First we check the associativity: 
\begin{align*}
(a_1&, \phi_1). \left\{ (a_2, \phi_2). (x, \tilde{m}) \right\} \\
&= (a_1, \phi_1) . (a_2 x +\phi_2 \otimes q(\tilde{m}), a_2 \tilde{m}) \\
&=(a_1 a_2 x + (a_1 \phi_2 + \phi_1 a_2) \otimes q(\tilde{m}), a_1a_2 q(\tilde{m})) \\
&= \left\{(a_1, \phi_1). (a_2, \phi_2)\right\}. (x, \tilde{m}).
\end{align*}
Now we must check that $\Theta$ respects the differentials, which follows from a direct calculation: 
\begin{align*}
&\Theta  \left\{ d_B (a, \phi)  \otimes  (x, \tilde{m})+((-1)^{|a|} a, (-1)^{|\phi|} \phi) \otimes d(x, \tilde{m}) \right\} \\
&=\Theta \left\{(d_A(a), \rho^*(da)-d_{V^*}(\phi)) \otimes  (x, \tilde{m}) \right\}\\
&\quad +\Theta \left\{((-1)^{|a|} a, (-1)^{|\phi|} \phi) \otimes (\delta(x)+ \nabla \tilde{m}, d_{\widetilde{M}} (\tilde{m}))\right\} \\
&=\left(d_A(a) x + (\rho^*(da)-d_{V^*}(\phi)) \otimes q(\tilde{m})+ (-1)^{|a|}a (\delta(x)+ \nabla \tilde{m}) \right. \\
& \quad + \left.(-1)^{|\phi|} \phi \otimes q(d_{\widetilde{M}} (\tilde{m}))), d_A(a) \tilde{m}+(-1)^{|a|} d_{\widetilde{M}} (\tilde{m})) \right) \\
&=(\delta(ax)+\delta(\phi \otimes q(\tilde{m})) + \rho^*(da) \otimes q(\tilde{m})+(-1)^{|a|} a\nabla{\tilde{m}}, d_{\widetilde{M}}(a \tilde{m}))\\
&= (\delta(ax)+\delta(\phi \otimes q(\tilde{m})) + \nabla (a\tilde{m}), d_{\widetilde{M}}(a \tilde{m})) \\
&= d_{\mathrm{cone}(-\nabla [-1])} (ax + \phi \otimes q(\tilde{m}), a \tilde{m}) \\
&= d_{\mathrm{cone}(-\nabla [-1])}  \Bigl( \Theta \{(a, \phi) \otimes (x, \tilde{m})\} \Bigr).
\end{align*}
Let $T$ be the cone of $-\nabla [-1]$ endowed with the $\mathcal{A}_{V, \rho}$-structure given by $\Theta$. There is an exact sequence of $\mathcal{A}_{V, \rho}$-modules
\[
0 \rightarrow V^*[-1] \otimes M \rightarrow T \rightarrow \widetilde{M} \rightarrow 0.
\]
\end{proof}

\subsection{Koszul duality}
In all this section, we will make the following hypotheses: 
\begin{enumerate}
\item[--] $A$ is a cofibrant cdga concentrated in nonpositive degrees.
\item[--] $V$ is a perfect anchored $A$-module concentrated in positive degrees (in particular, the cdga $\mathcal{A}_{V, \rho}$ is also concentrated in nonpositive degrees).
\end{enumerate}

\par \medskip

The derived tangent complex $\mathbb{T}_{A / \mathcal{A}_{V, \rho}}$ classifies derived deformations of $A$ as an $\mathcal{A}_{V, \rho}$-module. Explicitly, we have
\[
\mathbb{T}_{A / \mathcal{A}_{V, \rho}}=\mathbf{Der}_{\mathcal{A}_{V, \rho}}(\widetilde{A}, \widetilde{A})
\] 
where $\widetilde{A}$ is a quasi-isomorphic replacement of the $\mathcal{A}_{V, \rho}$-algebra $A$, such that the map $\mathcal{A}_{V, \rho} \rightarrow \widetilde{A}$ is a cofibration. A crucial observation made in \cite{calaque_lie_2014} is that $\mathbb{T}_{A / A_{V, \rho}}$ is naturally a derived\footnote{We assumed at the beginning that $A$ was a cofibrant cdga, so our dg-Lie algebroids are already derived.} $(\mathbf{k}, A)$ Lie algebroid, with the bracket given by the usual bracket of derivations, and the anchor map being 
\[
\mathbf{Der}_{\mathcal{A}_{V, \rho}}(\widetilde{A}, \widetilde{A}) \rightarrow \mathbf{Der}_{\mathbf{k}}(\widetilde{A}, \widetilde{A}) =\mathbb{T}_{\widetilde{A}/{\mathbf{k}}}
\]
The main result linking square zero extensions and anchored modules is the following: 
\begin{proposition}
Let $(V, \rho)$ be a perfect anchored $A$-module, and assume that $A$ is a cofibrant cdga concentrated in nonpositive degrees. Then there exists a natural quasi-isomorphism
\[
\textit{free}(V, \rho) \rightarrow \mathbb{T}_{A/\mathcal{A}_{V, \rho}}
\]
of $(\mathbf{k}, A)$ dg-Lie algebroids.
\end{proposition}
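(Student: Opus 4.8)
The plan is to build the morphism through the free–forget adjunction and then to check that it is a quasi-isomorphism by a weight-filtration argument. By the adjunction $\textit{free} \dashv \textit{forget}$ together with Proposition~\ref{ciment}, a morphism of dg-Lie algebroids $\textit{free}(V,\rho) \to \mathbb{T}_{A/\mathcal{A}_{V,\rho}}$ is the same datum as a morphism of anchored $A$-modules $V \to \mathbb{T}_{A/\mathcal{A}_{V,\rho}}$, and the resulting algebroid morphism, being an adjunct, automatically respects bracket and anchor; so the first task is only to produce that anchored map. Fix once and for all a semi-free model $\mathcal{A}_{V,\rho} \hookrightarrow \widetilde{A} \xrightarrow{\ \sim\ } A$ obtained by running the small object argument over $\mathcal{A}_{V,\rho}$, so that $\widetilde{A}$ is the free graded-commutative $\mathcal{A}_{V,\rho}$-algebra generated by a graded $A$-module $W = \bigoplus_{n \geq 1} W_n$, where $W_1 \cong V^{*}$ is chosen so that its differential maps isomorphically, up to homotopy, onto the augmentation ideal $V^{*}[-1] \subseteq \mathcal{A}_{V,\rho}$ of~\eqref{titi}, and the $W_{n\geq 2}$ kill the successive obstructions. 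For a local section $v$ of $V$, let $D_v \in \mathbf{Der}_{\mathcal{A}_{V,\rho}}(\widetilde{A},\widetilde{A})$ be the unique $\mathcal{A}_{V,\rho}$-linear derivation that is contraction with $v$ on $W_1 \cong V^{*}$ and vanishes on $W_{n \geq 2}$. Using that the cross term in the differential of $\mathcal{A}_{V,\rho}$ is exactly $\rho^{*}$ and that $d(W_1) \subseteq V^{*}[-1]$, one checks that $D_v$ is a cocycle, that $v \mapsto D_v$ is $A$-linear, and that its image under the anchor $\mathbf{Der}_{\mathcal{A}_{V,\rho}}(\widetilde{A},\widetilde{A}) \hookrightarrow \mathbf{Der}_{\mathbf{k}}(\widetilde{A},\widetilde{A}) = \mathbb{T}_{\widetilde{A}} \simeq \mathbb{T}_A$ equals $\rho(v)$; this produces the anchored map and hence the morphism $\Psi \colon \textit{free}(V,\rho) \to \mathbb{T}_{A/\mathcal{A}_{V,\rho}}$.

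It remains to prove that $\Psi$ is a quasi-isomorphism of underlying complexes. I would filter the source by bracket length, so that its $n$-th graded piece is the weight-$n$ component $\mathrm{Lie}_n(V)$ of the free Lie algebra over $A$ on $V$ (with vanishing anchor on the associated graded), and filter the target by the order of a derivation with respect to the generators $W = \bigoplus_n W_n$ of $\widetilde{A}$, so that $\mathrm{Gr}^n \mathbb{T}_{A/\mathcal{A}_{V,\rho}} \simeq \mathrm{Hom}_A(W_n, A)$. Both filtrations are exhaustive and bounded below, and $\Psi$ is filtered; its $n$-th graded piece is, after these identifications, the canonical map $\mathrm{Lie}_n(V) \to \mathrm{Hom}_A(W_n, A)$. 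Thus everything reduces to showing that the generators $W_n$ of $\widetilde{A}$ are, degreewise, the $A$-linear dual of $\mathrm{Lie}_n(V)$ — i.e.\ that $\widetilde{A}$ is, degreewise, the Chevalley–Eilenberg cochain algebra of $\textit{free}(V,\rho)$.

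The main obstacle is exactly this last identification, which carries the Koszul-duality content of the statement. I expect it to follow by observing that, because $\mathcal{A}_{V,\rho}$ is a \emph{square-zero} extension of $A$, the filtration of $\mathcal{A}_{V,\rho}$ by powers of its augmentation ideal is trivial past degree $1$; hence the associated graded of the bar/Koszul resolution computing the $W_n$ has zero differential, and the comparison collapses to the split case $\rho = 0$, where it is the classical fact that a free Lie algebroid is Koszul-dual to the split square-zero extension it generates (equivalently, that the Chevalley–Eilenberg cochains of a free Lie algebroid are cohomologically concentrated in the lowest weights). The twist by $\rho$ then only perturbs the differential of $\widetilde{A}$ and enters the final statement solely through the anchor, so it does not affect the graded comparison. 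One bookkeeping caveat: the anchored map $V \to \mathbb{T}_{A/\mathcal{A}_{V,\rho}}$ must be produced \emph{strictly}, as a morphism of complexes, which is why one fixes the semi-free model $\widetilde{A}$ at the outset instead of working in $\mathrm{D}(\mathcal{A}_{V,\rho})$: the $\textit{free}$ functor and the adjunction of Proposition~\ref{ciment} live at the strict level.
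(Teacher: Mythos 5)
There is a genuine gap, and it sits exactly where the paper has to work hardest: the production of a \emph{strict} anchored map $V \to \mathbf{Der}_{\mathcal{A}_{V,\rho}}(\widetilde{A},\widetilde{A})$. Your derivation $D_v$ (contraction with $v$ on the weight-one generators $W_1\cong V^*$, zero on higher generators) does not work as defined. First, contraction pairs $W_1$ with $v$ into $A$, and $A$ is \emph{not} a sub-dg-algebra of $\widetilde{A}$ (nor of $\mathcal{A}_{V,\rho}$: the map $a\mapsto(a,0)$ fails to commute with $d$ precisely because of the cross term $\rho^*(da)$ in the differential of $\mathcal{A}_{V,\rho}$), so the formula does not define a derivation valued in $\widetilde{A}$ compatibly with differentials. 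Concretely, if $d(w)=\iota(w)\in\mathcal{A}_{V,\rho}$ for $w\in W_1$, then $[d,D_v](w)$ acquires the component $\rho^*\bigl(d\langle w,v\rangle\bigr)\in V^*[-1]$, which is not of the form $D_{dv}(w)$; so ``one checks that $D_v$ is a cocycle'' is false in general, and $v\mapsto D_v$ is not a chain map. This cross term is exactly the difficulty the paper's proof is engineered around: it works with derivations valued in $A$ (where the offending $V^*[-1]$-component dies), via the explicit small model $A^{\dagger}=\mathrm{cone}(\iota)$ with $\theta_v(b,\phi)=-\phi(v)$ and the multiplicative $\mathbf{k}$-linear splitting $\sigma(a)=(a,-\rho^*(da))$ to verify anchor compatibility, and then passes to $\mathbf{Der}_{\mathcal{A}_{V,\rho}}(\widetilde{A},\widetilde{A})$ through the quasi-isomorphism to $\mathbf{Der}_{\mathcal{A}_{V,\rho}}(\widetilde{A},A)$ \emph{after replacing $V$ by a quasi-isomorphic module} --- i.e.\ the map into the honest Lie algebroid is obtained by a zig-zag, not strictly on the nose as you insist at the end of your proposal. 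Your construction can be repaired along these lines, but then it essentially becomes the paper's construction.

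On the quasi-isomorphism itself, be aware that the paper does not prove it: it invokes \cite[Corollary 5.2.2]{MR3701353} and only explains the morphism. Your filtration sketch is a reasonable outline, but its core step --- that the generators $W_n$ of a semi-free model are, weightwise, dual to $\mathrm{Lie}_n(V)$, i.e.\ the split-case Lie/commutative Koszul duality --- is precisely the content of the statement and is left as ``I expect it to follow''; invoking it as classical is no more of a proof than the paper's citation, and your reduction to the split case is itself shaky: the $\rho^*$-term breaks the weight \emph{grading} of $\mathcal{A}_{V,\rho}$ down to a filtration, so the filtration on $\widetilde{A}$ and on $\mathbb{T}_{A/\mathcal{A}_{V,\rho}}$, the identification of the graded pieces on both sides, and the convergence of the comparison (which in the paper's subsequent theorem is handled by truncations, using that $V$ is perfect and concentrated in positive degrees) all need to be set up explicitly rather than asserted.
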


\begin{proof}
This result follows from \cite[Corollary 5.2.2]{MR3701353}, and is well known to experts in derived deformation theory. For the sake of completeness, we explain concretely where the morphism comes from. To lighten notation, we put $B=\mathcal{A}_{V, \rho}$. 
\par \medskip
We introduce the $B$-algebra $A^{\dagger}$ which is the cone of the morphism 
\[
\iota \colon V^*[-1] \rightarrow B
\] 
endowed with the product
\[
\tau[(b, \phi) \otimes_{\mathbf{k}} (b', \phi')]=(bb', \pi(b) \phi' + \phi \pi(b)).
\]
We will consider $\iota$ as a morphism from $V^*$ to $B$ of degree $-1$ satisfying $\iota(b\phi)=(-1)^b b \iota(\phi)$.
Let us check that $\tau$ is a morphism of complexes. We compute separately the two components of
\begin{align*}
\tau[(d_B(b) +& \iota(\phi) , d_{V^*}(\phi)) \otimes (b', \iota(\phi'))] \\
&+\tau[((-1)^{|b|} b, (-1)^{|\phi|} \phi) \otimes (d_B(b') + \iota(\phi) , d_{V^*}(\phi'))]
\end{align*}
\begin{align*}
\textrm{\ding{62}} \, &\textrm{First component} \\
&(d_B(b) + \iota(\phi)) b' + (-1)^b  b (d_B(b') + \iota(\phi)) \\
&= d_B(bb') + \iota(\phi) b' + (-1)^b b \iota(\phi) \\
& d_B(bb')+\iota(\phi b'+ b \phi)\\
& \\
\textrm{\ding{62}} \, &\textrm{Second component} \\
&\pi(d_B (b)) \phi' + d_{V^*}\phi \pi(b')+(-1)^{|b|} \pi(b) d_{V^*}(\phi')  + (-1)^{|\phi|} \phi \pi(d_B(b')) \\
&= d_{V^*}(\pi(b) \phi' + \phi \pi(b')).
\end{align*}
Remark that the morphism $A^{\dagger} \rightarrow A$ is a $B$-linear quasi-isomorphism.
Let us now consider the morphism
\[
\theta \colon V \rightarrow \mathrm{Der}_B(A^{\dagger}, A)
\]
given by 
$
\theta_v (b, \phi)=-\phi(v) 
$.
We have 
\[
\begin{cases}
\theta_v(b'(b, \phi))= \theta_v(b'b, \pi(b) \phi)=-\pi(b) \phi(v) \\
\theta_v [(b, \phi)(b', \phi')]=-(\pi(b) \phi'+ \phi \pi(b'))(v)=-\pi(b) \phi'(v)-\phi(v) \pi(b')
\end{cases}
\]
so $\theta_v$ is a $B$-linear derivation of $A^{\dagger}$. The algebra $A^{\dagger}$ carries another important feature: the quasi-isomorphism $A^{\dagger} \rightarrow A$ admits a $\mathbf{k}$-linear splitting $\sigma$ given by 
\[
\sigma(a) = (a, -\rho^*(da))
\]
We must check that $\sigma$ commutes with the differentials and is multiplicative.
\begin{align*}
\sigma(a) \sigma(a') &= (a, -\rho^*(da)). (a', -\rho^*(da')) = (aa', -a \rho^*(da')-\rho^*(da) a')\\
&=(aa', -\rho^*(d(aa')))= \sigma(aa')
\end{align*}
and
\begin{align*}
\sigma(d_A(a)) &=(d_A(a), -\rho^*(d(d_A(a)))=(d_A(a), -d_{V^*}(\rho^*(da))) \\
&=(d_B(a)-\iota(\rho^*(da)), -d_{V^*}(\rho^*(da))) \\
&= d_{A^{\dagger}}(\sigma(a)).
\end{align*}
We can assume that the cofibrant resolution $\widetilde{A}$ of $A$ as a $B$-algebra dominates $A^{\dagger}$. Let us consider the diagram
\[
\xymatrix{
 & \mathbf{Der}_{B}(A^{\dagger}, A)  \ar[d] \ar[r] &\mathbf{Der}_B(\widetilde{A}, A) \\
V\ar[rd]_-{\rho} \ar[ru]^-{\theta} & \mathbf{Der}_{\mathbf{k}}(A^{\dagger}, A) \ar[d]^-{\circ \sigma} & \mathbf{Der}_{B}(\widetilde{A}, \widetilde{A}) \ar[u] \ar[d]\\
& \mathbf{Der}_{\mathbf{k}}(A, A) & \mathbf{Der}_{\mathbf{k}}(\widetilde{A}, \widetilde{A}) \ar@{-}_-{\sim}[l]
} 
\]
where for any morphism of cdgas $D \rightarrow C$ and any $C$-module $M$ we put $\mathbf{Der}_C(D, M)=\mathrm{RHom}_C(\mathbb{L}_{C/D}, M)$.
The commutativity follows from the fact that
\[
\theta_v(\sigma(a))=\theta_v(a, -\rho^*(da))=\rho^*(da) (v)=\rho(v)(a).
\]
Hence, after replacing $V$ by a quasi-isomorphic $\widetilde{A}$-module, we get a commutative diagram
\[
\xymatrix{ V \ar[rr]^{\tilde{\theta}} \ar[rd] &&   \mathbf{Der}_{B}(\widetilde{A}, \widetilde{A}) \ar[ld] \\
& \mathbf{Der}_{\mathbf{k}}(\widetilde{A}, \widetilde{A}) & 
}
\]
The right arrow is the anchor of the $(\mathbf{k}, \tilde{A})$ Lie algebroid $\mathbf{Der}_{B}(\widetilde{A}, \widetilde{A})$, so we get a morphism
\[
\textit{free}(V, \rho) \rightarrow \mathbf{Der}_{B}(\widetilde{A}, \widetilde{A}).
\] 
\end{proof}

\begin{theorem}
There are natural multiplicative quasi-isomorphisms
\[
\begin{cases}
\mathrm{U}_{\mathbf{k}/A}(\textit{free}(V, \rho)) \rightarrow \mathrm{RHom}_{\mathcal{A}_{V, \rho}}(A, A) \\
A \overset{\mathbb{L}}{\otimes}_{\mathcal{A}_{V, \rho}} A \rightarrow \mathrm{J}_{\mathbf{k}/A}(\textit{free}(V, \rho))
\end{cases}
\]
of algebra objects in $\mathrm{D}(\mathbf{corr}_A^{\mathrm{nilp}})$.
\end{theorem}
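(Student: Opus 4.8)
The plan is to deduce both quasi-isomorphisms from a single explicit semifree model of $A$ over $B:=\mathcal{A}_{V,\rho}$, and then to match the two sides against Theorem~\ref{prenzlauer} and its Remark. I keep the notation of the preceding Proposition: $\widetilde{A}\xrightarrow{\sim}A$ a cofibrant $B$-algebra resolution (dominating the algebra $A^{\dagger}$), the $B$-linear derivations $\theta_v$ of $\widetilde{A}$ satisfying $\theta_v(\sigma(a))=\rho(v)(a)$, where $\sigma$ is the $\mathbf{k}$-linear section. Since $\widetilde{A}\simeq A$ it is harmless to work over $\widetilde{A}$ and transport the result back, using the earlier Lemma that $\mathrm{U}_{\mathbf{k}/A}\circ\textit{free}$ and $\mathrm{J}_{\mathbf{k}/A}\circ\textit{free}$ preserve quasi-isomorphisms of perfect anchored modules.

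\textbf{Construction of the maps.} The dg-algebra $\mathbf{End}_{B}(\widetilde{A})$ models $\mathrm{RHom}_{B}(A,A)$, and with left multiplication by $\widetilde{A}$ and the evaluation anchor $\mathbf{End}_{B}(\widetilde{A})\hookrightarrow\mathbf{End}_{\mathbf{k}}(\widetilde{A})$ it is an object of $\mathbf{alg}_{\widetilde{A}/\mathbf{End}_{\mathbf{k}}(\widetilde{A})}$. The derivations $\theta_v$ are primitive in the sense of Definition~\ref{fix} (one has $[\theta_v,m_a]=m_{\theta_v(a)}$, and $\theta_v$ is anchored by $\rho(v)$), so $v\mapsto\theta_v$ is a morphism of anchored modules $V\to\mathcal{P}(\mathbf{End}_{B}(\widetilde{A}))$. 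Feeding it into the composite adjunction $\mathrm{U}_{\mathbf{k}/A}\circ\textit{free}\dashv\mathcal{P}\circ\textit{forget}$ (Proposition~\ref{ciment}) yields a morphism of algebra objects of $\mathbf{corr}_A^{\mathrm{nilp}}$
\[
\mathrm{U}_{\mathbf{k}/A}(\textit{free}(V,\rho))\longrightarrow \mathbf{End}_{B}(\widetilde{A})\simeq \mathrm{RHom}_{B}(A,A),
\]
multiplicative and natural in $(V,\rho)$ by construction. For the second map I use that every step of the filtration of Theorem~\ref{prenzlauer}(c), namely $\mathrm{F}^{n}=\Sigma_V^{[n]}$, is perfect over $A$, being a finite extension of the perfect modules $V^{\otimes i}$, $i\le n$; hence $\mathrm{J}_{\mathbf{k}/A}(\textit{free}(V,\rho))=\varprojlim_n (\Sigma_V^{[n]})^{\vee}$ is the $A$-linear dual of a tower of perfect modules. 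Dualizing the matching filtration of $\mathbf{End}_{B}(\widetilde{A})$ levelwise and passing to the limit produces a model of $A\overset{\mathbb{L}}{\otimes}_{B}A\ (\simeq\widetilde{A}\otimes_{B}\widetilde{A})$ together with the map $A\overset{\mathbb{L}}{\otimes}_{B}A\to \mathrm{J}_{\mathbf{k}/A}(\textit{free}(V,\rho))$, multiplicative for the shuffle product as in the Remark after Theorem~\ref{prenzlauer}.

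\textbf{Quasi-isomorphism.} Both maps are filtered: on the left by Theorem~\ref{prenzlauer}(c) (word length, $\mathrm{Gr}^{n}=V^{\otimes n}$), on the right by the order filtration of $B$-linear differential operators on $\widetilde{A}$, respectively by the bar/weight filtration of $\widetilde{A}\otimes_{B}\widetilde{A}$. I would check the maps respect these and then compute on associated graded. Because $B=A\oplus V^{*}[-1]$ is a (twisted) square-zero extension, the reduced bar construction identifies $\mathrm{gr}\,(A\overset{\mathbb{L}}{\otimes}_{B}A)$ with the tensor algebra $\mathrm{T}_{A}(V^{*}[-1][1])=\mathrm{T}_{A}V^{*}$ carrying the shuffle product, i.e. exactly $\mathrm{gr}\,\mathrm{J}_{\mathbf{k}/A}(\textit{free}(V,\rho))$, and dually $\mathrm{gr}\,\mathrm{RHom}_{B}(A,A)\simeq\mathrm{T}_{A}V$; the induced graded maps are isomorphisms, the identity under these identifications. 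Since $V$ is concentrated in positive degrees, $V^{\otimes n}$ sits in degrees $\ge n$ and $\mathrm{F}^{n}$ lies in $\mathbf{corr}_A^{\,\le n}$, so the filtrations are exhaustive and degreewise finite; a standard comparison then upgrades the graded isomorphism to a quasi-isomorphism of filtered algebra objects of $\mathrm{D}(\mathbf{corr}_A^{\mathrm{nilp}})$. As a sanity check, for $\rho=0$ this is the classical Koszul self-duality of $A\oplus V^{*}[-1]$, the twist $\rho^{*}$ in the differential of $B$ being precisely the obstruction to formality, in accordance with Proposition~\ref{eberswalder} and the Formality Theorem.

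\textbf{Main obstacle.} The crux is the associated-graded step on the right-hand sides: one must produce on $\mathrm{RHom}_{B}(A,A)$ and on $A\overset{\mathbb{L}}{\otimes}_{B}A$ filtrations whose graded pieces are the \emph{tensor} powers $V^{\otimes n}$ and $(V^{*})^{\otimes n}$ — not the symmetric powers a naive PBW filtration would give — verify the natural maps are filtered, and control all of this not merely in $\mathrm{D}(A)$ but bimodule-by-bimodule, respecting the $\mathbf{corr}_A^{\,\le n}$ structure of each filtration step. In other words, the noncommutative Koszul duality of a twisted square-zero extension has to be carried out at the level of correspondences, which is where the bookkeeping is delicate.
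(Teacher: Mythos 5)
Your construction of the first map coincides with the paper's: the derivations $\theta_v$ are primitive in $\mathrm{Hom}_B(\widetilde{A},\widetilde{A})$ (with $B=\mathcal{A}_{V,\rho}$), and Proposition \ref{ciment} then produces $\Psi\colon \mathrm{U}_{\mathbf{k}/A}(\textit{free}(V,\rho))\to \mathrm{Hom}_B(\widetilde{A},\widetilde{A})$. But the heart of the theorem is showing $\Psi$ is a quasi-isomorphism, and here your argument has a genuine gap that you yourself flag as the ``main obstacle'' without resolving it. The filtered comparison you propose requires exhibiting filtrations on $\mathrm{RHom}_B(A,A)$ and on $A\overset{\mathbb{L}}{\otimes}_B A$ whose associated graded are the \emph{tensor} algebras $\mathrm{T}_A V$ and $\mathrm{T}_A V^*$, compatibly with the word-length filtration of Theorem \ref{prenzlauer}. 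Neither of your two candidates delivers this as stated: the order filtration on relative differential operators naively produces symmetric powers of $\mathbb{T}_{A/B}$ (so one would still need a dg PBW comparison for the free Lie algebroid), and the ``reduced bar construction over $A$'' is not available because $B=\mathcal{A}_{V,\rho}$ is \emph{not} an $A$-algebra: the differential $d(a,\phi)=(d_A a,\rho^*(da)-d_{V^*}\phi)$ is twisted by $\rho^*$, so the inclusion $A\hookrightarrow B$ is not a chain map and the relative bar complex with graded pieces $(V^*)^{\otimes_A n}$ does not exist in the form you invoke. The parenthetical ``(twisted)'' is precisely where the content lies. Your construction of the second map (levelwise dualization of an unspecified filtration of $\mathbf{End}_B(\widetilde{A})$ producing a model of $A\overset{\mathbb{L}}{\otimes}_B A$) likewise presupposes finiteness and biduality statements that are not argued.

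For comparison, the paper avoids putting any filtration on the right-hand side. It uses the exact sequence \eqref{titi} to produce, via the connecting map $A\to V^*$ in $\mathrm{D}(B)$, a decomposition
\[
\bigl(V\otimes_{\widetilde{A}}\mathrm{Hom}_B(\widetilde{A},\widetilde{A})\bigr)\oplus \widetilde{A}\;\xrightarrow{\ (v\otimes f,\tilde a)\mapsto f\circ\tilde\theta_v+r_{\tilde a}\ }\;\mathrm{Hom}_B(\widetilde{A},\widetilde{A}),
\]
which is a quasi-isomorphism of right $\widetilde{A}$-modules; the same recursive self-similarity holds for $\mathrm{U}_{\mathbf{k}/A}(\textit{free}(V,\rho))$ by Theorem \ref{prenzlauer}. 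Since $V$ is concentrated in strictly positive degrees, an induction on the cohomological truncations $\tau^{\le p}\Psi$ (starting from the vacuous case of negative $p$) then yields the quasi-isomorphism, with the jet-algebra statement obtained by duality. If you want to salvage your route, you would need to replace the relative bar construction by a filtration of $B$ itself by its augmentation ideal (deforming away the twist $\rho^*$) and then control convergence and the bimodule structure level by level; as written, the decisive step is missing.
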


\begin{proof}
Let us construct the first morphism, the other one being dual. We keep the same notation as in the previous proof, and we consider the chain of morphisms
\[
\textit{free}(V, \rho) \rightarrow \mathbf{Der}_{B}(\widetilde{A}, \widetilde{A}) \rightarrow \mathrm{Hom}_B(\widetilde{A}, \widetilde{A}).
\]
The algebra $\mathrm{Hom}_B(\widetilde{A}, \widetilde{A})$ is anchored over $\mathbf{End}_{\mathbf{k}}(\widetilde{A})$ and we see that the subspace $\mathbf{Der}_{B}(\widetilde{A}, \widetilde{A})$ is obviously inside the primitive part, so by the universal property of the enveloping algebra of a Lie algebroid, we get a morphism
\begin{equation} \label{sabre}
\Psi \colon \mathrm{U}_{\mathbf{k}/\widetilde{A}} (\textit{free}(V, \rho)) \rightarrow \mathrm{U}_{\mathbf{k}/\widetilde{A}}(\mathbf{Der}_{B}(\widetilde{A}, \widetilde{A})) \hookrightarrow \mathrm{Hom}_{B}(\widetilde{A}, \widetilde{A}).
\end{equation}
The last map is injective, its image is the $B$-module $\mathbf{Diff}_B(\widetilde{A})$ of relative differential operators over $B$ with coefficients in $\widetilde{A}$, which is an explicit model for the universal enveloping algebra of $\mathbf{Der}_B(\widetilde{A}, \widetilde{A})$. 
\par \medskip
It is fairly easy to understand the left action of $V$ on $\mathrm{Hom}_{B}(\widetilde{A}, \widetilde{A})$ at the level of derived categories: it is simply given by the map
\[
V \otimes_A \mathrm{RHom}^{\ell}_{B}(A, A) \simeq \mathrm{RHom}^{\ell}_{B}(V^*, A) \rightarrow \mathrm{RHom}^{r}_{B}(A, A)
\]
where the last arrow is obtained by precomposing by the connection morphism $A \rightarrow V^*$ in $\mathrm{D}(B)$ attached to \eqref{titi}, and $\mathrm{RHom}^{\ell}_B(\star, A)$ is the derived functor of  $\mathrm{Hom}_B(\star, A)$ which goes from $B$-modules to $A$-modules. Using again \eqref{titi}, there is an isomorphism
\[
\left(V \otimes_A \mathrm{RHom}^{\ell}_{B}(A, A)\right) \oplus A \xrightarrow{\sim} \mathrm{RHom}^{\ell}_{B}(A, A)
\]
in $\mathrm{D}(A)$ which means that the map
\begin{align*}
\left( V \otimes_{\widetilde{A}} \mathrm{Hom}_B(\widetilde{A}, \widetilde{A}) \right) \oplus \widetilde{A} &\rightarrow \mathrm{Hom}_B(\widetilde{A}, \widetilde{A}) \\
(v \otimes f, \tilde{a}) & \rightarrow f \circ \tilde{\theta}_v + r_{\tilde{a}}
\end{align*}
is a quasi-isomorphism of right $\widetilde{A}$-modules. Since $V$ is concentrated in positive degrees, this implies that for any integer $p$, there are quasi-isomorphisms
\[
\xymatrix{
\tau^{\leq p+1} \left( V \otimes_{\widetilde{A}} \mathrm{Hom}_B(\widetilde{A}, \widetilde{A}) \right) \oplus \widetilde{A} \ar[r] \ar[d] & \tau^{\leq p+1} \mathrm{Hom}_B(\widetilde{A}, \widetilde{A}) \\
\tau^{\leq p+1} \left( V \otimes_{\widetilde{A}} \tau^{\leq p }\mathrm{Hom}_B(\widetilde{A}, \widetilde{A}) \right) \oplus \widetilde{A} &
}
\]
The very same property folds for $\textit{free}(V, \rho)$. This implies by induction on $p$ that for any $p$ the truncated map $\tau^{\leq p} \Psi$ is a quasi-isomorphism: indeed, for negative $p$, the truncated map is zero since both members are concentrated in nonnegative cohomological degree.
\end{proof}
\section{Geometric applications}

\subsection{Global results}
The constructions of the previous sections can be globalized over an arbitrary derived scheme, thanks to descent theory for quasicoherent complexes on derived schemes (\textit{see} \cite{toen_derived_2012} and \cite[\S 2.2]{ toen_homotopical_2008}). Let $X$ be a derived scheme over $\mathbf{k}$ and let $\mathcal{V}$ be a perfect object of $\mathrm{L}_{qcoh}(X)$, that is an object of $\mathrm{L}_{parf}(X)$, cohomologically concentrated in positive degrees, and endowed with an anchor map $\rho \colon \mathcal{V} \rightarrow \mathbb{T}_X$, where $\mathbb{T}_X$ is the tangent complex of $X$. Then all the results we prove admit a global version:
\par \medskip
\fbox{Structure theorem}
\begin{enumerate}
\item[--] The algebra $\mathrm{U}(\textit{free}(V, \rho))$ admits a filtration whose pieces of level $n$ is the co-equalizers $\Sigma_{\mathcal{V}}^{[n]}$, and the corresponding graded algebra is the tensor algebra $\mathrm{T}\mathcal{V}$.
\item[--] The Jet algebra $\mathrm{J}(\textit{free}(V, \rho))$ admits a cofiltration whose pieces of degree $n$ are the equalizers $(\Sigma_{\mathcal{V}}^*)^{[n]}$.
\end{enumerate}
\par \medskip
\fbox{HKR class and derived connections}
\begin{enumerate}
\item[--] Any object $\mathcal{F}$ of $\mathrm{L}_{qcoh}(X)$ admits an HKR class $\Theta_{\mathcal{F}}$, which is a morphism from $\mathcal{F}$ to $\mathcal{V}^* [1] \otimes \mathcal{F}$ in $\mathrm{D}(X)$.
\item[--] The class $\Theta_{\mathcal{F}}$ vanishes if and only if $\mathcal{F}$ admits a derived $(\mathcal{V}, \rho)$ connection.
\end{enumerate}
\par \medskip
\fbox{Formality theorem}$ $ \par \smallskip
Let $\mathcal{F}$ be an object of $\mathrm{L}_{qcoh}(X)$.
\begin{enumerate}
\item[--] The filtration on $\mathrm{U}(\textit{free} (V, \rho)) \otimes_{\mathcal{O}_X} \mathcal{F}$ splits in $\mathrm{D}(X)$ if and only if $\Theta_{\mathcal{F}}$ and $\Theta_{\mathcal{F} \otimes \mathcal{V}}$ vanish.
\item[--] The cofiltration on $\mathrm{J}(\textit{free} (V, \rho)) \otimes_{\mathcal{O}_X} \mathcal{F}$ splits in $\mathrm{D}(X)$ if and only if $\Theta_{\mathcal{F}}$ and $\Theta_{\mathcal{F} \otimes \mathcal{V}}$ vanish.
\end{enumerate}
\par \medskip
\fbox{Square zero extensions}
\begin{enumerate}
\item[--] There is a natural derived scheme $X_{\mathcal{V}, \rho}$, which is a square zero extension of $X$.
\item[--] We have multiplicative isomorphisms
\[
\begin{cases}
\mathrm{U}_{\mathbf{k}/X}(\textit{free}(\mathcal{V}, \rho)) \simeq \mathrm{RHom}_{\mathcal{O}_{X_{\mathcal{V}, \rho}}}(\mathcal{O}_X, \mathcal{O}_X) \\
\mathrm{J}_{\mathbf{k}/X}(\textit{free}(\mathcal{V}, \rho)) \simeq \mathcal{O}_X \overset{\mathbb{L}}{\otimes}_{{\mathcal{O}_{X_{\mathcal{V}, \rho}}}} \mathcal{O}_X
\end{cases}
\]
 in $\mathrm{D}^+_{\Delta_X}(X \times X)$ and $\mathrm{D}^-_{\Delta_X}(X \times X)$ respectively.
\end{enumerate}
\par \medskip
\fbox{Extension theorem}
\begin{enumerate}
\item[\vphantom{--}] For any object $\mathcal{F}$ of $\mathrm{L}_{qcoh}(X)$, the class $\Theta_{\mathcal{F}}$ vanishes if and only if $\mathcal{F}$ is the pullback of an object in $\mathrm{L}_{qcoh}(X_{\mathcal{V}, \rho})$.
\end{enumerate}
\par \medskip

\subsection{Geometric setting and quantized cycles}
The first setting is the one of \cite{bigpaper}: $S$ is a locally trivial first order thickening of a smooth $\mathbf{k}$-scheme $X$ by a locally free sheaf $\mathcal{I}$. There is an associated exact sequence
\[
0 \rightarrow \mathcal{I} \rightarrow \mathcal{E} \rightarrow \Omega^1_X \rightarrow 0
\] 
where $\mathcal{E}=(\Omega^1_S)_{|X}$, whose extension class $\eta$ in $\mathrm{Ext}^1_{\mathcal{O}_X}(\Omega^1_X, \mathcal{I})$ determines entirely $S$. Hence the sheaf
\[
\mathcal{V}=\mathrm{cone}\,(\mathrm{T}_X \rightarrow \mathcal{E}^*) [-1]
\]
is naturally endowed with an anchor given by the diagram
\[
\xymatrix{\mathrm{T}_X \ar[d] \ar[r] & \mathcal{E}^* \\
\mathrm{T}_X&
}
\]
Note that $\mathcal{V}$ is quasi-isomorphic to $\mathcal{I}^*[-1]$. This setting appears often in the geometric situation, which is slightly more restrictive, where $X$ is a closed subscheme of an ambient scheme $Y$, and $S$ is the first formal neighborhood of $X$ in $Y$. In that case $\mathcal{E}=\Omega^1_{Y | X}$ so
\[
\mathcal{V}= \mathrm{cone}\,(\mathrm{T}_X \rightarrow \mathrm{T}_{Y|X}) [-1].
\]
In this setting, the notion of \textit{quantized analytic cycle} introduced in \cite{grivaux_hochschild-kostant-rosenberg_2014} becomes transparent: by definition, $(X, \sigma)$ is quantized in $Y$ if the conormal exact sequence of the pair $(X, Y)$ splits globally, and $\sigma$ is such a splitting. It follows that such a $\sigma$ determines a morphism from $(\mathcal{V}, \mathrm{pr_1})$ to the module $\mathrm{N}_{X/Y}[-1]$ endowed with the null anchor, and this morphism is a quasi-isomorphism. Therefore, the quantized HKR isomorphism of \cite{grivaux_hochschild-kostant-rosenberg_2014} is induced by the canonical isomorphism
\[
\mathcal{O}_X \overset{\mathbb{L}}{\otimes}_{\mathcal{O}_S} \mathcal{O}_X \simeq \mathrm{J}(\textit{free}(\mathrm{N}_{X/Y}[-1])) \simeq \mathrm{T} (\mathrm{N}^*_{X/Y}[1]).
\]
Outside the quantized world, one of the only known result is the calculation of the derived tensor product $\mathcal{O}_X \overset{\mathbb{L}}{\otimes}_{\mathcal{O}_S} \mathcal{O}_X$ done in \cite{bigpaper}. Unwrapping what it means, we see that the description of the truncations obtained as equalizers can be interpreted as the isomorphisms
\[
\mathcal{O}_X \overset{\mathbb{L}}{\otimes}_{\mathcal{O}_S} \mathcal{O}_X \simeq \mathrm{J}_{\mathbf{k}/X}(\textit{free}(\mathcal{V}, \rho)) \simeq \varprojlim_{n \in \mathbb{N}} (\Sigma_{\mathcal{V}}^*)^{[n]}.
\]
\subsection{Deformation theory}

In \cite{arinkin_when_2012}, the authors define the HKR class of a vector bundle $\mathcal{H}$ on $X$ as the morphism
\[
\alpha_{\mathcal{H}}\colon \mathcal{H} \rightarrow \Omega^1_X [1] \otimes \mathcal{H} \rightarrow \mathrm{N}^*_{X/Y}[2] \otimes \mathcal{H}
\]
in $\mathrm{D}(X)$, where the first map is given by the Atiyah class of $\mathcal{H}$ and the second one is the extension class attached to the conormal exact sequence. Then they prove: 
\begin{enumerate}
\item[--] $\alpha_{\mathcal{H}}$ vanishes exactly when $\mathcal{H}$ extends to a locally free sheaf on the first formal neighborhood $S$ of $X$ in $Y$. 
\item[--] $\mathcal{O}_X \overset{\mathbb{L}, r}{\otimes}_{\mathcal{O}_S} \mathcal{H} \simeq \mathrm{T}(\mathrm{N}^*_{X/Y}[-1]) \otimes \mathcal{H}$ if and only if the HKR classes of $\mathcal{H}$ and $\mathrm{N}_{X/Y} \otimes \mathcal{H}$ vanish\footnote{The superscript ``r" means that the tensor product is derived with respect to the right variable.}.
\end{enumerate}
This theorem can be recovered from our construction by considering the anchored module $(\mathcal{V}, \rho)=(\mathrm{cone}\,(\mathrm{T}_X \rightarrow \mathrm{T}_{Y|X}) [-1], \mathrm{pr}_1)$. To see this we argue as follows: 
\par \medskip
First we check that the dual of $\alpha_{\mathcal{H}}$ is $\Theta_{\mathcal{H}^*}$. Indeed, $\alpha_{\mathcal{H}}$ can be described as the extension class associated with the exact sequence
\[
0 \rightarrow \mathrm{N}^*_{X/Y} \otimes \mathcal{H} \rightarrow
\mathrm{cone} \left(\Omega^1_{Y|X} \otimes \mathcal{H} \rightarrow \mathrm{P}^1_X(\mathcal{H}) \right) \rightarrow \mathcal{H} \rightarrow 0
\]
This sequence is quasi-isomorphic to 
\[
\xymatrix{
0 \ar[r] &  \mathrm{cone} \left(\Omega^1_{Y|X} \otimes \mathcal{H} \rightarrow \Omega^1_X \otimes \mathcal{H} \right) \ar[d] && \\
& \mathrm{cone} \left(\Omega^1_{Y|X} \otimes \mathcal{H} \rightarrow \mathrm{P}^1_X(\mathcal{H}) \right) \ar[r] & \mathcal{H} \ar[r] & 0
}
\]
which is
\[
0 \rightarrow \mathcal{V} \otimes \mathcal{H} \rightarrow \Sigma_{\mathcal{V}}^* \otimes \mathcal{H} \rightarrow \mathcal{H} \rightarrow 0.
\]
Using Proposition \ref{salaud} as well as the fact that $\Sigma_V^*$ is symmetric, the dual of this sequence is 
\[
0 \rightarrow \mathcal{H}^* \rightarrow \Sigma_{\mathcal{V}} \otimes \mathcal{H}^* \rightarrow \mathcal{H}^* \otimes V \rightarrow 0
\]
so the corresponding extension morphism is $\Theta_{\mathcal{H}^*}$. Hence 
\[
\alpha_{\mathcal{H}}=0 \Leftrightarrow \Theta_{\mathcal{H}^*}=0 \Leftrightarrow \Theta_{\mathcal{H}}=0
\] 
because derived connections on $\mathcal{H}$ and $\mathcal{H}^*$ are in bijection.
\par \medskip
The next step is the explicit description of $X_{\mathcal{V, \rho}}$. The underlying set is $X$, and the sheaf of rings is the total complex of
\[ \xymatrix{ & \mathcal{O}_X \ar[d]^-{d} \\
\Omega^1_{Y|X} \ar[r] & \Omega^1_X
}
\]
and the ring strucure is the same as the one on the square zero extension of $\mathcal{O}_X$ by complex given by the bottom line. There is a natural ring quasi-isomorphism from $S$ to $X_{\mathcal{V}, \rho}$ given by
\[
f \rightarrow (df_{|X}, f_{|X}).
\]
Hence $\mathrm{L}_{qcoh}(S) \simeq \mathrm{L}_{qcoh}(X_{V, \rho})$, and the result follows from the extension theorem. For the second point, it suffices to use that $\mathcal{O}_X \overset{\mathbb{L}}{\otimes}_{\mathcal{O}_S} \mathcal{O}_X$ is isomorphic to the jet algebra of $\textit{free}(\mathcal{V}, \rho)$. Then we apply the formality theorem.

\bibliographystyle{plain}
\bibliography{biblio}

\end{document}